\theoremstyle{plain}
\newtheorem{proposition}{Proposition}[section]
\newtheorem{theorem}[proposition]{Theorem}
\newtheorem{corollary}[proposition]{Corollary}
\newtheorem{lemma}[proposition]{Lemma}
\newtheorem{conjecture}[proposition]{Conjecture}
\theoremstyle{definition}
\newtheorem{algorithm}[proposition]{Algorithm}
\newcommand{\Split}{\mathsf{Split}}
\newcommand{\Merge}{\mathsf{Merge}}
\newcommand{\Reduce}{\mathsf{Reduce}}
\newcommand{\Tri}{\mathbf{Tri}}
\newcommand{\DTri}{\mathbf{DTri}}
\newcommand{\abg}{\alpha,\beta,\gamma}
\newcommand{\wa}{\hat{a}}
\newcommand{\wb}{\hat{b}}
\newcommand{\wc}{\hat{c}}
\newcommand{\Gi}   {G_1}
\newcommand{\Gii}  {G_2}
\newcommand{\Giii} {G_3}
\newcommand{\Giv}  {G_4}
\newcommand{\Gv}   {G_5}
\newcommand{\Gvi}  {G_6}
\newcommand{\Gvii} {G_7}
\newcommand{\Gviii}{G_8}
\newcommand{\Gix}  {G_9}
\newcommand{\Gx}   {G_{10}}
\newcommand{\Gxi}  {G_{11}}
\newcommand{\Gxii} {G_{12}}
\newcommand{\Gxiii}{G_{13}}
\newcommand{\Gxiv} {G_{14}}
\newcommand{\Gbar} {\overline{\Gxiv}}
\begin{document}
\title{The minimal density of triangles in tripartite graphs}
\author{Rahil Baber\thanks{Department of Mathematics, UCL, London, WC1E 6BT, UK. Email: rahilbaber@hotmail.com.} \and J. Robert Johnson\thanks{School of Mathematical Sciences, Queen Mary University of London, E1 4NS, UK. Email: r.johnson@qmul.ac.uk.}\and John Talbot\thanks{Department of Mathematics, UCL, London, WC1E 6BT, UK. 
Email: talbot@math.ucl.ac.uk.  This author is a Royal Society University Research Fellow.}}
\date\today
\maketitle
\begin{abstract}
We determine the minimal density of triangles in a
tripartite graph with prescribed edge densities. This extends a
previous result of Bondy, Shen, Thomass\'e and Thomassen
characterizing those edge densities guaranteeing the existence of a
triangle in a tripartite graph.

To be precise we show that a suitably weighted copy of the graph
formed by deleting a certain $9$-cycle from $K_{3,3,3}$ has minimal
triangle density among all weighted tripartite graphs with
prescribed edge densities.
\end{abstract}

\section{Introduction}

Extremal questions for triangles in graphs have a very long history.
The first such result, Mantel's theorem \cite{Man}, tells us that a
graph with $n$ vertices and more than $n^2/4$ edges must contain at
least one triangle.

For graphs with more than $n^2/4$ edges it is natural to pose a
quantitative question: what is the minimum number of triangles in a
graph with a given number of edges? In this direction Razborov
\cite{Raz} determined (asymptotically) the minimal density of
triangles in a graph of given edge density. This recent result was
the cumulation of decades of contributions on this question due to
Bollob\'as \cite{Boll}, Erd\H os \cite{Erd62}, Lov\'asz and Simonovits \cite{LS}, and
Fisher \cite{Fisher}.

Recently Bondy, Shen, Thomass\'e and Thomassen \cite{Paper_BSTT}
considered the very natural question of when a tripartite graph with
prescribed edge densities must contain a triangle. (A tripartite
graph is a graph $G=(V,E)$ for which there exists a partition of its
vertices into three vertex classes such that all edges go between
classes. The edge density between a pair of vertex classes $X,Y$ is
simply the proportion of edges present between the two classes:
$|E(X,Y)|/|X||Y|$.)

Bondy et al.~characterized those triples of edge densities
guaranteeing a triangle in a tripartite graph. As a special case
they showed that any tripartite graph in which the density of edges
between each pair of classes is greater than $1/\varphi=0.618\ldots$
contains a triangle (a precise statement of their full result can be
found in the next section).

The aim of this paper is to prove a quantitative result which
extends the theorem of Bondy et al. in the same way that Razborov's
result extends Mantel's theorem.

The remainder of the paper is organised as follows. Formal
definitions and main results are given in the next section. Our main
result splits into two rather different cases and the following two
sections contain their proofs. We finish with some conjectures and
open problems. The proof relies on a computer search and an appendix
containing the C++ source code for this computation is also
included.

\section{Definitions and results}

A tripartite graph is a graph $G=(V,E)$ for which there exists a
partition of its vertices into three independent sets. Throughout,
whenever we consider a tripartite graph we will implicitly assume
that a fixed tripartition $V=A\dot{\cup}B \dot{\cup}C$ is given.

A \emph{weighted tripartite graph} $(G,w)$ is a tripartite graph
$G=(V,E)$ together with a \emph{weighting} $w:V\to [0,1]$ satisfying
\[
\sum_{a\in A}w(a) = \sum_{b\in B}w(b) = \sum_{c\in C} w(c)=1.
\]
The weight of an edge $xy\in E(G)$ is  $w(xy)=w(x)w(y)$. The
\emph{edge densities} of $(G,w)$ are
\begin{align*}
\alpha(G,w)=\!\!\!\!\sum_{bc\in E(B,C)}\!\!\!w(bc),\quad
\beta(G,w)=\!\!\!\!\sum_{ac\in E(A,C)}\!\!\!w(ac),\quad
\gamma(G,w)=\!\!\!\!\sum_{ab\in E(A,B)}\!\!\!w(ab).
\end{align*}
We denote the set of all weighted tripartite graphs by $\Tri$. For
$\abg\in [0,1]$ we let $\Tri(\abg)$ denote the set of all weighted
tripartite graphs with edge densities $\alpha(G,w)=\alpha$, $
\beta(G,w)=\beta$, $\gamma(G,w)=\gamma$.

Let $(G,w)\in\Tri$. A \emph{triangle} in $G$ is a set of three
vertices, $a\in A,b\in B, c\in C$, such that $ab,ac,bc\in E(G)$. We
denote the set of all triangles in $G$ by $T(G)$.  The weight of a
triangle $xyz\in T(G)$ is $w(xyz)=w(x)w(y)w(z)$. The \emph{triangle
density} of $(G,w)\in \Tri$ is \[t(G,w)=\sum_{abc\in T(G)}w(abc).\]

Note that with the obvious definitions of edge and triangle
densities for simple tripartite graphs any such graph can be
converted into a weighted tripartite graph with the same edge and
triangle densities by setting the vertex weights to be $1/|A|,
1/|B|, 1/|C|$ for vertices in classes $A,B,C$ respectively.

Also, any weighted tripartite graph with rational weights can be
converted into a simple tripartite graph with the same edge and
triangle densities by taking a suitable blow-up. To be precise,
choose an integer $n$ so that $nw(v)$ is an integer for all vertices
$v$ and replace each vertex of weight $x$ with $nx$ new vertices.
The new vertices being clones of the old in the sense that we join a
pair of vertices in the new graph if and only if the pair of
vertices they arise from are adjacent in the weighted graph.

We are interested in how small the triangle density of a weighted
tripartite graph with prescribed edge densities can be. Formally we
wish to determine the following function. For $\abg\in [0,1]$ let
\[
T_{\min}(\abg)=\min_{(G,w)\in \Tri(\abg)}t(G,w).
\]

It is not difficult to believe that this function is well-defined
however for completeness we sketch a proof of this fact. Since this
makes use of results from much later in the paper we suggest the
reader takes this on trust until they reach the relevant results.
Given $0\leq \abg\leq 1$, Lemma \ref{DoublyWeighted_SameAs_Weighted}
implies that $\Tri(\abg)\neq \emptyset$. Now Lemma
\ref{MaxClassSize3} implies that when attempting to minimise
$t(G,w)$ over $\Tri(\abg)$ we may restrict our search to the finite
subfamily consisting of tripartite graphs with at most three
vertices per class. Finally note that for a single tripartite graph
$G$ the problem of determining the minimum value of $t(G,w)$,
subject to the edge densities of $(G,w)$ being $\abg$, is a
minimisation problem for a continuous function over a compact
domain. Hence $T_{\min}(\abg)$ is well-defined.

The following simple lemma shows that solving this weighted problem
will give an asymptotic answer to the question of how many triangles
a simple (unweighted) tripartite graph with given edge densities
must have.
\begin{lemma}\label{SimpleG}
\begin{enumerate}
\item[(i)] If $G$ is a simple tripartite graph with edge densities $\abg$
then it has triangle density at least $T_{\min}(\abg)$.
\item[(ii)] For rational $\abg$, if $(H,w)\in\Tri(\abg)$ then for all
$\epsilon>0$ there is a simple tripartite graph $G$ with edge
densities $\abg$ and triangle density at most $t(H,w)+\epsilon$.
\end{enumerate}
\end{lemma}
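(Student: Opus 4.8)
The two parts are straightforward once we exploit the dictionary between simple tripartite graphs and weighted ones described just before the statement.

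For part (i): given a simple tripartite graph $G$ with vertex classes $A,B,C$ and edge densities $\abg$, I would form the weighted tripartite graph $(G,w)$ by assigning each vertex in class $A$ weight $1/|A|$, each vertex in $B$ weight $1/|B|$, and each vertex in $C$ weight $1/|C|$. A direct check shows the class weight-sums are all $1$, so $(G,w)\in\Tri$; moreover the weighted edge density between $A$ and $B$ is $\sum_{ab\in E(A,B)}w(a)w(b)=|E(A,B)|/(|A||B|)$, which is exactly the combinatorial edge density, and similarly for the other two pairs, so $(G,w)\in\Tri(\abg)$. The same computation for triangles gives $t(G,w)=|T(G)|/(|A||B||C|)$, the combinatorial triangle density of $G$. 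Since $(G,w)\in\Tri(\abg)$, the definition of $T_{\min}$ yields $t(G,w)\ge T_{\min}(\abg)$, which is the claim.

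For part (ii): here I would use the blow-up construction. Given rational $\abg$ and $(H,w)\in\Tri(\abg)$, the weights $w(v)$ need not be rational, so first I would perturb $w$ slightly to a rational weighting $w'$ on the same graph $H$ with class-sums still equal to $1$; by continuity of the edge and triangle densities in the weights this can be done so that the edge densities of $(H,w')$ are within $\delta$ of $\abg$ and $t(H,w')\le t(H,w)+\epsilon/2$. There is a subtlety: we need the edge densities to equal $\abg$ exactly, not approximately. To handle this I would instead invoke the continuity of $T_{\min}$ — or more carefully, note that $\abg$ rational and $(H,w)\in\Tri(\abg)$ means we can look for a rational weighting achieving the densities exactly (for instance by adjusting weights within the support while preserving the linear density constraints, which form a rational system); alternatively one perturbs $(H,w)$ within $\Tri(\abg)$ first if needed. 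Once we have $(H,w')\in\Tri(\abg)$ with rational weights and $t(H,w')\le t(H,w)+\epsilon/2$, choose $n$ with $nw'(v)\in\mathbb{Z}$ for all $v$ and take the $n$-fold blow-up $G$ as described in the excerpt: replace each vertex $v$ by $nw'(v)$ clones, joining two clones exactly when their originals were adjacent. Then $G$ is a simple tripartite graph whose edge and triangle densities equal those of $(H,w')$, hence edge densities $\abg$ and triangle density $t(H,w')\le t(H,w)+\epsilon/2\le t(H,w)+\epsilon$.

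The only genuine obstacle is the rationality issue in part (ii): passing from an arbitrary weighting realising $\abg$ to a \emph{rational} one that still realises $\abg$ \emph{exactly}. I expect this to be handled by observing that, fixing the underlying graph $H$ and the set of vertices with positive weight, the constraints "class-sums equal $1$" and "edge densities equal $\abg$" are linear equations with rational coefficients in the weights; since this system has a real solution (namely $w$) with all coordinates in the relevant open region, and the coefficients are rational, it has rational solutions arbitrarily close to $w$, and by continuity of $t(H,\cdot)$ we may pick one with triangle density at most $t(H,w)+\epsilon$. Everything else is bookkeeping with the blow-up and the definitions.
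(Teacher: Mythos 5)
Part (i) of your proposal matches the paper's proof and is fine. For part (ii), however, your proposed repair of the rationality issue contains a genuine gap: you assert that, with the graph $H$ fixed, the constraints ``edge densities equal $\abg$'' are linear equations with rational coefficients in the vertex weights, so that rational solutions exist arbitrarily close to $w$. They are not linear: each edge density is a bilinear form $\sum_{xy\in E(X,Y)}w(x)w(y)$ in the weights, so the constraint set is cut out by quadratic equations, and a real solution of a rational quadratic system need not be approximable by (or even admit) rational solutions. The paper itself shows this can genuinely fail: for $H_6$ any weighting realising densities $\abg$ must have $\wa_1=\bigl(\alpha-\beta+\gamma\pm\sqrt{\Delta(\abg)}\bigr)/2\alpha$, so for rational $(\abg)\in R_1$ with $\Delta(\abg)$ not the square of a rational there is no rational weighting of $H_6$ with those exact edge densities. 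Hence the step ``obtain $(H,w')\in\Tri(\abg)$ with rational weights'' cannot be carried out in general, and after your blow-up the simple graph only has edge densities approximately, not exactly, equal to $\abg$.

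The paper's proof avoids exactly this obstacle: it takes a rational weighting $w'$ whose edge densities $\alpha',\beta',\gamma'$ and triangle density are merely within $\epsilon/4$ of those of $(H,w)$, chooses $n$ so that $nw'(v)$ and $n^2|\alpha-\alpha'|$, $n^2|\beta-\beta'|$, $n^2|\gamma-\gamma'|$ are integers, blows up to get $n$ vertices per class, and then adds or removes at most $\frac{3\epsilon}{4}n^2$ edges to make the edge densities exactly $\abg$ (this is where rationality of $\abg$ is used), creating at most $\frac{3\epsilon}{4}n^3$ new triangles, so the triangle density is at most $t(H,w')+\frac{3\epsilon}{4}<t(H,w)+\epsilon$. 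You need this final edge-editing step after the blow-up (or some substitute for it); as written, your part (ii) does not go through.
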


\begin{proof}
Part (i) is immediate since any tripartite graph can be transformed
into a weighted tripartite graph by weighting vertices uniformly in
each vertex class as described above.

For part (ii) let $w'$ be a rational weighting of $H$ such that if
the edge densities of $(H,w')$ are $\alpha',\beta',\gamma'$ we have
$|\alpha-\alpha'|,|\beta-\beta'|,|\gamma-\gamma'|,|t(H,w)-t(H,w')|<\frac{\epsilon}{4}$.
We can do this since for a given $H$ the edge and triangle densities
are continuous functions of the vertex weights. Now choose an
integer $n$ so that $nw'(v)$ is an integer for all vertices $v$, and
$n^2|\alpha-\alpha'|,n^2|\beta-\beta'|,n^2|\gamma-\gamma'|$ are all
integers. Blow up $H$ by replacing each vertex $v$ with $nw'(v)$
cloned vertices to form a simple graph $G'$ with $n$ vertices in
each class. Finally add or remove at most $\frac{3\epsilon}{4}n^2$
edges from $G'$ to form a graph $G$ with edge densities $\abg$. This
creates at most $\frac{3\epsilon}{4}n^3$ new triangles and so the
triangle density of $G$ is at most
$t(H,w')+\frac{3\epsilon}{4}<t(H,w)+\epsilon$.
\end{proof}

Bondy, Shen, Thomass\'e and Thomassen \cite{Paper_BSTT} proved the
following sharp Tur\'an-type result. If $(G,w)\in \Tri(\abg)$ and
$(\abg)\in R$, where
\begin{align*}
R = \{(\abg)\in[0,1]^3 : \alpha\beta+\gamma>1, \alpha\gamma+\beta>1,
\beta\gamma+\alpha>1\},
\end{align*}
then $G$ must contain a triangle.

\begin{theorem}\label{Thm_T=0}
$T_{\min}(\abg) = 0 \iff (\abg)\in [0,1]^3\setminus R$.
\end{theorem}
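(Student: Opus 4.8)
The plan is to deduce Theorem~\ref{Thm_T=0} from two facts: (a) $T_{\min}(\abg)=0$ if and only if $\Tri(\abg)$ contains a \emph{triangle-free} weighted tripartite graph; and (b) such a triangle-free graph exists exactly when $(\abg)\in[0,1]^3\setminus R$, one implication of (b) being precisely the theorem of Bondy et al.\ quoted above and the other a short explicit construction.

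For (a), the direction ``$\Leftarrow$'' is immediate since a triangle-free graph has triangle density $0$. For ``$\Rightarrow$'': the minimum defining $T_{\min}(\abg)$ is attained (the defining optimisation is well-defined, as argued just after the definition, using Lemmas~\ref{DoublyWeighted_SameAs_Weighted} and~\ref{MaxClassSize3}), so pick $(G,w)\in\Tri(\abg)$ with $t(G,w)=0$ and delete every vertex of weight $0$. This changes neither the vertex-class weight sums nor any edge density nor the triangle density, so the resulting $(G',w')$ still lies in $\Tri(\abg)$ with $t(G',w')=0$; but now all weights are positive, so every triangle of $G'$ would have positive weight, forcing $G'$ to be triangle-free.

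Given (a), ``$T_{\min}(\abg)=0\Rightarrow(\abg)\notin R$'' is the contrapositive of the Bondy et al.\ result: if $(\abg)\in R$ then every member of $\Tri(\abg)$, in particular any triangle-free candidate, contains a triangle. For the converse, assume $(\abg)\in[0,1]^3\setminus R$. Both $R$ and $T_{\min}$ are invariant under permuting the three vertex classes (which permutes the coordinates of $(\abg)$), so we may assume $\alpha\beta+\gamma\le 1$. I would then exhibit a triangle-free $(G,w)\in\Tri(\abg)$ directly: put a single vertex $c$ in $C$, two vertices in $B$ of weights $1-\alpha$ and $\alpha$, and a handful of vertices in $A$; join $c$ to the weight-$\alpha$ vertex of $B$ and to a set $S\subseteq A$ of total weight $\beta$, and include no edge between $S$ and the weight-$\alpha$ vertex of $B$. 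Then $c$, the only vertex of $C$, lies in no triangle, so $G$ is triangle-free, while $\alpha(G,w)=\alpha$ and $\beta(G,w)=\beta$ hold automatically. The remaining freedom is in the $A$--$B$ edges: avoiding the forbidden pairs, the largest attainable value of $\gamma(G,w)$ is $1\cdot 1-\alpha\beta=1-\alpha\beta$, and by replacing one or two vertices of $A$ by clones with suitably chosen weights one realises every value in $[0,1-\alpha\beta]$; since $\alpha\beta+\gamma\le 1$, this includes the target $\gamma$.

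The only step needing genuine care is this last one — checking that the clone construction interpolates continuously over the whole interval $[0,1-\alpha\beta]$ of possible $A$--$B$ densities — while the triangle-free verification and the symmetry reduction are routine. It is worth emphasising that this is the ``easy'' half of the project: it only locates the boundary of the region on which $T_{\min}$ is positive, whereas the substantive work, namely evaluating $T_{\min}(\abg)$ throughout $R$, occupies the two sections that follow.
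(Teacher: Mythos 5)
Your proposal is correct, but note that the paper itself never proves Theorem \ref{Thm_T=0}: it is stated as a reformulation of the Bondy--Shen--Thomass\'e--Thomassen theorem and simply cited from \cite{Paper_BSTT}. What you add, therefore, is the bridge between the quoted statement (``every member of $\Tri(\abg)$ with $(\abg)\in R$ contains a triangle'') and the statement in the form the paper uses ($T_{\min}(\abg)>0$ on $R$), plus a sharpness construction outside $R$; both halves are sound. The bridge correctly handles the one genuine subtlety the paper's terse formulation glosses over --- a graph may contain a triangle yet have triangle density $0$ if one of its vertices has weight $0$ --- by invoking attainment of the minimum (the paper's well-definedness argument, resting on $\Reduce$/Lemma \ref{MaxClassSize3} and Lemma \ref{DoublyWeighted_SameAs_Weighted}, neither of which uses Theorem \ref{Thm_T=0}, so there is no circularity) and then deleting zero-weight vertices, which leaves each class with total weight $1$ and changes no density, forcing the resulting graph to be genuinely triangle-free. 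Your construction for $(\abg)\notin R$ also works: the $S_3$ symmetry reduction to $\alpha\beta+\gamma\le 1$ is legitimate (transpositions of classes swap the corresponding pair of densities), the single $C$-vertex whose $A$-neighbourhood of weight $\beta$ avoids its weight-$\alpha$ $B$-neighbour rules out all triangles while fixing $\alpha$ and $\beta$, and the interpolation step you flag is indeed routine: write $\gamma=\gamma_1+\gamma_2$ with $0\le\gamma_1\le\beta(1-\alpha)$, $0\le\gamma_2\le 1-\beta$ (possible since $\gamma\le 1-\alpha\beta$), then split the weight-$\beta$ part of $A$ into clones of weights $\gamma_1/(1-\alpha)$ and $\beta-\gamma_1/(1-\alpha)$ (only the first joined to the weight-$(1-\alpha)$ vertex of $B$), and split the rest of $A$ into clones of weights $\gamma_2$ and $1-\beta-\gamma_2$ (only the first joined to all of $B$). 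In effect you have reproved the sharpness half of \cite{Paper_BSTT} rather than citing it, which is more than the paper needed but is a complete and correct route to the theorem.
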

In particular, $T_{\min}(d,d,d)=0$ if and only if $d\leq0.618\ldots$
(the positive root of the quadratic $x^2+x-1=0$).

Our main result (Theorem \ref{Thm_H9}) determines the minimal
density of triangles in a weighted tripartite graph with prescribed
edge densities.

The \emph{tripartite complement} of a tripartite graph $G$ is the
graph obtained by deleting the edges of $G$ from the complete
tripartite graph on the same vertex classes as $G$. Let $H_9$ be the
graph whose tripartite complement is given in Figure \ref{Fig_H9}.
\begin{theorem}\label{Thm_H9}
For any $(\abg)\in R$ there exists a weighting $w$ of $H_9$ such
that $(H_9,w)\in\Tri(\abg)$ and $t(H_9,w)=T_{\min}(\abg)$.
\end{theorem}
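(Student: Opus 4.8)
The plan is to reduce the infinite-dimensional optimisation defining $T_{\min}(\abg)$ to a finite check, and then to verify that $H_9$, suitably weighted, is always the minimiser. The first reduction is already supplied by the discussion preceding Lemma~\ref{SimpleG}: by Lemma~\ref{MaxClassSize3} it suffices to minimise $t(G,w)$ over tripartite graphs $G$ with at most three vertices in each class, of which there are finitely many (up to isomorphism). So I would enumerate these graphs $G$ — call the resulting list $\Gi,\Gii,\dots,\Gxiv$ (the notation set up in the preamble strongly suggests the relevant list has fourteen members, with $H_9=\Gbar$ playing a distinguished role) — and for each one study the function $\abg\mapsto \min\{t(G,w): (G,w)\in\Tri(\abg)\}$. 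For a fixed graph $G$ on $\le 3+3+3$ vertices this is a polynomial optimisation in the $\le 9$ weight variables subject to the three linear normalisations and the three (bilinear) edge-density constraints; Lagrange/KKT analysis, or direct elimination, gives a semi-algebraic description of the minimum as a function of $(\abg)$.

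The heart of the argument is then the comparison: show that for every $(\abg)\in R$ and every graph $G$ in the finite list, $\min_w t(G,w)\ge t(H_9,w^*)$, where $w^*=w^*(\abg)$ is the optimal weighting of $H_9$. I would first pin down $w^*$ and the value $t(H_9,w^*)$ explicitly. Since the tripartite complement of $H_9$ is a $9$-cycle (Figure~\ref{Fig_H9}), $H_9$ is vertex-transitive-ish and highly structured, so one expects the optimal weighting to be supported on few vertices and to be describable by closed-form expressions in $\alpha,\beta,\gamma$; I would derive the stationarity conditions for $t(H_9,w)$ on $\Tri(\abg)$, solve them, and record the resulting formula $f(\abg):=t(H_9,w^*)$. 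Establishing $f(\abg)>0$ on $R$ (consistent with Theorem~\ref{Thm_T=0}) and that $f$ matches the known boundary behaviour is a useful sanity check. Then, for each competitor graph $G_i$, I would show $\min_w t(G_i,w)\ge f(\abg)$ on $R$. The excerpt signals that the proof "splits into two rather different cases"; presumably the region $R$ (or the set of competitor graphs) breaks into a part where a short analytic argument works and a part where the comparison is reduced to a finite system of polynomial inequalities verified by the computer search (the C++ code in the appendix), most plausibly via a sum-of-squares / Positivstellensatz certificate that $t(G_i,w)-f(\abg)\ge 0$ holds identically under the constraint equations.

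The main obstacle is exactly this last comparison step: the competitor graphs are numerous, their optimal-weighting analyses are individually messy, and the inequality $t(G_i,w)\ge f(\abg)$ must be proved uniformly over a three-dimensional region $R$ rather than at a single point, so naive case analysis explodes. The way to tame it is (i) to use symmetry — the three edge densities play interchangeable roles, cutting the work by a factor of (up to) $6$, and deletions/additions of edges relate many of the $G_i$ to one another via the $\Split$/$\Merge$/$\Reduce$ operations hinted at in the preamble, letting one argue that it suffices to check a small number of "extremal" graphs; and (ii) to reduce each surviving inequality to a polynomial feasibility problem and discharge it by an explicit algebraic certificate, which is where the computer search enters. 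Two auxiliary lemmas do the structural heavy lifting: Lemma~\ref{DoublyWeighted_SameAs_Weighted} (so we may freely pass to a convenient "doubly weighted" model) and Lemma~\ref{MaxClassSize3} (the bound on class sizes); with those in hand the remaining work is the bookkeeping of the finitely many cases and the certified inequalities, after which setting $w=w^*(\abg)$ exhibits the required weighting of $H_9$ attaining $T_{\min}(\abg)$, completing the proof. $\qquad\blacksquare$
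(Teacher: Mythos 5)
There is a genuine gap: the heart of your plan --- derive a closed-form expression $f(\abg)=t(H_9,w^*)$ for the optimal weighting of $H_9$ and then prove $\min_w t(G_i,w)\ge f(\abg)$ for every competitor over all of $R$ --- is not available, and the paper is structured precisely to avoid needing it. The authors never compute the optimal $H_9$-density; indeed on the region $R_2$ they cannot even decide whether $H_9$ ever beats the simpler graph $H_7$, which is exactly why the explicit formula for $T_{\min}$ there is left as Conjecture~\ref{Conj_Tmin}. The actual argument sidesteps the comparison you propose: it splits $R$ by the sign of the discriminant $\Delta(\abg)$ (not by which graphs admit ``short analytic arguments''). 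On $R_1$ one proves the universal lower bound $t(G,w)\ge\alpha+\beta+\gamma-2$ (Lemma~\ref{a+b+g-2_LowerBound}) and shows $H_6$ attains it with the prescribed densities precisely when $\Delta(\abg)\ge 0$ (Lemmas~\ref{6Vertex_IsBest}, \ref{6Vertex_ValidRegion}). On $R_2$ one proves a structural statement (Theorem~\ref{Thm_H7H7'H9}): every extremal, vertex-minimal $(G,w)$ has $G$ strongly-isomorphic to $H_7$, $H_7'$ or $H_9$. The conclusion of Theorem~\ref{Thm_H9} then follows from the one-line observation that $H_6$, $H_7$ and $H_7'$ are induced subgraphs of $H_9$, so an extremal weighting of any of them extends to a weighting of $H_9$ by giving the remaining vertices weight zero. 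No closed form for $t(H_9,w^*)$, no sanity checks on it, and no inequality of the form $t(G_i,w)\ge f(\abg)$ are ever needed.

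You also misconstrue what the computation does. The fourteen graphs $\Gi,\dots,\Gxiv$ are not ``the graphs with at most three vertices per class'' (there are roughly $2^{27}$ of those), and $H_9$ is $\Gxiii$, not $\Gbar$ (which denotes the tripartite complement of $\Gxiv$, used only inside Lemma~\ref{G14_NotExtremal}). The computer search is a purely combinatorial filter: it applies the weight-free elimination criteria (Corollary~\ref{Order_subset}, Lemmas~\ref{CollapseClass}, \ref{OppositeClassSizeIs3}, \ref{CollapsePairOfVertices}, \ref{ReplaceBy8}, \ref{F7_NotExtremal}, \ref{F9_NotExtremal}, Theorem~\ref{Thm_T=0}) to discard graphs irrespective of the weighting; it performs no polynomial optimisation and produces no sum-of-squares or Positivstellensatz certificates. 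The eleven survivors other than $H_7,H_7',H_9$ are then eliminated by hand, typically by exhibiting that any weighting forces $t\ge\min\{\alpha\beta+\gamma-1,\alpha\gamma+\beta-1,\beta\gamma+\alpha-1\}$ and invoking Lemma~\ref{CyclicIneq_NotExtremal}, whose upper bound comes from the explicit $H_7$ construction of Lemma~\ref{7Vertex_MinDensity} --- i.e.\ candidates are beaten by comparison with $H_7$, not with an optimised $H_9$. Without the $\Delta$-based case split, the lower bound of Lemma~\ref{a+b+g-2_LowerBound}, the structural reduction lemmas, and the final induced-subgraph observation, your outline does not assemble into a proof.
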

\begin{figure}[tbp]
\begin{center}
\includegraphics[height=4cm]{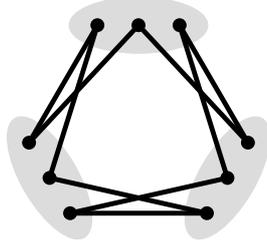}
\caption{The tripartite complement of the graph
$H_9$.}\label{Fig_H9}
\end{center}
\end{figure}
This theorem combined with Lemma \ref{SimpleG} shows that a suitable
blow-up of $H_9$ has asymptotically the minimum density of triangles
for given edge densities.

There are two distinct cases to consider in the proof of Theorem
\ref{Thm_H9}, depending on the values of $\abg$. Let
\[
\Delta(\abg)= \alpha^2+\beta^2+\gamma^2
-2\alpha\beta-2\alpha\gamma-2\beta\gamma +4\alpha\beta\gamma.
\]
We partition $R$ into two regions: $R_1$ and $R_2$ where
\[R_1=\{(\abg)\in R:\Delta(\abg)\geq 0\}\] and $R_2=
R\setminus R_1$. For $R_1$ we have the following result.
\begin{theorem}\label{Thm_H6}
If $(\abg)\in R_1$ and $H_6$ is the graph whose tripartite
complement is given in Figure \ref{Fig_H6} then there exists a
weighting $w$ such that $(H_6,w)\in\Tri(\abg)$, and for any such $w$
\[
T_{\min}(\abg)=t(H_6,w)=\alpha+\beta+\gamma-2.\]
\end{theorem}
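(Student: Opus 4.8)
First, the inequality $T_{\min}(\abg)\ge\alpha+\beta+\gamma-2$ is immediate and in fact holds on all of $[0,1]^3$. Given $(G,w)\in\Tri(\abg)$, choose a triple $(a,b,c)$ with $a\in A$, $b\in B$, $c\in C$ with probability $w(a)w(b)w(c)$. Then $abc$ fails to be a triangle of $G$ only if one of $ab,ac,bc$ is a non-edge, and these three events have probabilities $1-\gamma$, $1-\beta$, $1-\alpha$, so by the union bound
\[
t(G,w)=\Pr[abc\in T(G)]\ \ge\ 1-\big((1-\alpha)+(1-\beta)+(1-\gamma)\big)=\alpha+\beta+\gamma-2 .
\]
So it suffices to produce, for each $(\abg)\in R_1$, a weighting of $H_6$ with edge densities $\abg$ and triangle density exactly $\alpha+\beta+\gamma-2$, and to check that \emph{every} weighting of $H_6$ with those edge densities attains this value.

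The graph $H_6$ (Figure~\ref{Fig_H6}) has classes $\{a_1,a_2\},\{b_1,b_2\},\{c_1,c_2\}$ and is $K_{2,2,2}$ with a perfect matching removed, one edge between each pair of classes; say the deleted edges are $a_1b_2$, $b_1c_2$, $a_2c_1$. Its only triangles are then $a_1b_1c_1$ and $a_2b_2c_2$. Putting $p=w(a_1)$, $q=w(b_1)$, $r=w(c_1)$ (so $w(a_2)=1-p$, etc.), the three deleted edges have weights $p(1-q)$, $q(1-r)$, $(1-p)r$, hence
\[
\gamma=1-p(1-q),\qquad \alpha=1-q(1-r),\qquad \beta=1-(1-p)r ,
\]
while $t(H_6,w)=pqr+(1-p)(1-q)(1-r)$. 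Expanding both expressions verifies the identity $t(H_6,w)=\alpha+\beta+\gamma-2$ for \emph{every} weighting $w$ of $H_6$; combined with the first paragraph this already disposes of the ``for any such $w$'' clause, and reduces the theorem to the claim that for $(\abg)\in R_1$ there exist $p,q,r\in[0,1]$ with $p(1-q)=1-\gamma$, $(1-p)r=1-\beta$, $q(1-r)=1-\alpha$.

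Eliminating $p=(1-\gamma)/(1-q)$ and $r=1-(1-\alpha)/q$ turns this system into the single quadratic
\[
f(q):=\beta q^2-(\beta+\gamma-\alpha)q+(1-\alpha)\gamma=0 ,
\]
whose discriminant works out to be exactly $\Delta(\abg)$ — so $\Delta\ge0$, i.e.\ $(\abg)\in R_1$, is precisely the condition for a real root. It remains to check that some root lies in the interval $[1-\alpha,\gamma]$ (nonempty since $\alpha+\gamma>1$ on $R$), for then $p=(1-\gamma)/(1-q)\in[0,1]$ and $r=1-(1-\alpha)/q\in[0,1]$. Since $f$ opens upwards with $f(1-\alpha)=\alpha(1-\alpha)(1-\beta)\ge0$ and $f(\gamma)=\gamma(1-\beta)(1-\gamma)\ge0$, it is enough that the vertex $q^*=(\beta+\gamma-\alpha)/(2\beta)$ lies in $[1-\alpha,\gamma]$, for then $f(q^*)=-\Delta/(4\beta)\le0$ forces both roots into this interval. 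The bounds $q^*\le\gamma$ and $q^*\ge1-\alpha$ are equivalent to $\alpha\ge\beta+\gamma-2\beta\gamma$ and $\gamma\ge\alpha+\beta-2\alpha\beta$ respectively, and each follows from $\Delta\ge0$ together with the $R$-inequalities (inside $R$ one has $\Delta\ge0\iff\sqrt\alpha\ge\sqrt{\beta(1-\gamma)}+\sqrt{\gamma(1-\beta)}$, and symmetrically with any variable in front, so squaring gives the two bounds). The degenerate cases $q\in\{0,1\}$ arise only when $\alpha=1$ or $\gamma=1$ and are handled directly. This yields the required weighting of $H_6$, and with the lower bound of the first paragraph completes the proof.

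The one genuinely substantive step is the last one: confirming that the root of $f$ really delivers weights in $[0,1]$. That is where the full hypothesis $(\abg)\in R$ — not just $\Delta\ge0$ — is used, via the equivalent symmetric forms of $\Delta\ge0$ valid inside $R$. Everything else (the union-bound lower bound and the triangle-count identity for $H_6$) is a one-line computation.
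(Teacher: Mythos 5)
Your proof is correct and follows essentially the same route as the paper: the union-bound lower bound is the paper's Lemma \ref{a+b+g-2_LowerBound}, the identity $t(H_6,w)=\alpha+\beta+\gamma-2$ for every weighting is Lemma \ref{6Vertex_IsBest}, and the existence argument reduces, exactly as in Lemma \ref{6Vertex_ValidRegion}, to a quadratic whose discriminant is $\Delta(\abg)$ (the paper then writes the roots explicitly and bounds them via the $R$-inequalities, whereas you place them by a sign/convexity analysis of $f$ on $[1-\alpha,\gamma]$ --- a cosmetic difference). The one assertion you leave unproved, that inside $R$ one has $\Delta\ge 0\iff\sqrt{\alpha}\ge\sqrt{\beta(1-\gamma)}+\sqrt{\gamma(1-\beta)}$, does hold (the roots of $\Delta$ as a quadratic in $\alpha$ are $\bigl(\sqrt{\beta(1-\gamma)}\pm\sqrt{\gamma(1-\beta)}\bigr)^2$ and the lower branch is excluded by $\alpha>1-\beta\gamma$), but it deserves a written justification, as does the degenerate case $\alpha=1$ or $\gamma=1$, which the paper treats separately.
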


\begin{figure}[tbp]
\begin{center}
\includegraphics[height=4cm]{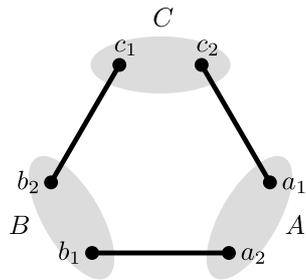}
\caption{The tripartite complement of the graph
$H_6$.}\label{Fig_H6}
\end{center}
\end{figure}


Let $(G,w)\in\Tri(\abg)$. If $t(G,w)=T_{\min}(\abg)$ then $(G,w)$ is
said to be \emph{extremal}. If there does not exist
$(G',w')\in\Tri(\abg)$ with $t(G',w')=t(G,w)$ and $|V(G')|<|V(G)|$
then $(G,w)$ is said to be \emph{vertex minimal}. The tripartite
graphs $G$ and $H$ with specified tripartitions are
\emph{strongly-isomorphic} if there is a graph isomorphism $f:G\to
H$ such that the image of each vertex class in $G$ is a vertex class
in $H$.

\begin{figure}[tbp]
\begin{center}
\includegraphics[height=4cm]{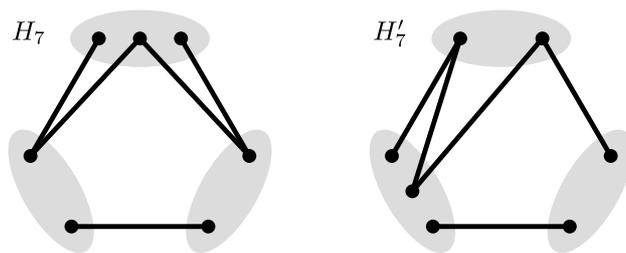}
\caption{The tripartite complements of the graphs $H_7$ and
$H_7'$.}\label{Fig_H7H7'}
\end{center}
\end{figure}

\begin{theorem}\label{Thm_H7H7'H9}
If $(\abg)\in R_2$ and $(G,w)\in\Tri(\abg)$ is extremal and vertex
minimal then $G$ is strongly-isomorphic to $H_7, H_7',$ or $H_9$
(see Figure \ref{Fig_H7H7'}).
\end{theorem}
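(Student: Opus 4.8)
The plan is to reduce the problem to a finite computation over a bounded family of ``candidate'' tripartite graphs, exploit symmetry, and then identify the minimizers explicitly. First I would invoke Lemma \ref{MaxClassSize3} (stated implicitly in the well-definedness discussion) to restrict attention to tripartite graphs $G$ with at most three vertices in each class, so that there are only finitely many graphs up to strong-isomorphism; for each such $G$, the set of weights realizing edge densities $(\abg)$ is a semialgebraic set and $t(G,w)$ is a polynomial, so the question of which $G$ can be extremal for $(\abg)\in R_2$ becomes a finite collection of polynomial optimization problems. The key reduction step is to show that no graph with a ``redundant'' vertex (one whose removal, after rescaling weights, does not increase the edge densities beyond what can be corrected) can be vertex minimal; this lets me prune the list of candidate graphs aggressively, and is where the computer search in the appendix does its work --- systematically checking, for each graph on at most three vertices per class, whether it can beat or tie the triangle density achieved by $H_7$, $H_7'$, $H_9$ somewhere in $R_2$.

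Second, I would handle the boundary between $R_1$ and $R_2$ carefully: on $R_1$, Theorem \ref{Thm_H6} already pins down $T_{\min}$ exactly as $\alpha+\beta+\gamma-2$ with extremal graph $H_6$, so in $R_2$ (where $\Delta(\abg)<0$) I must show the minimum is strictly larger than this linear expression and is attained only by $H_7$, $H_7'$, or $H_9$. The natural tactic is a Lagrange-multiplier / KKT analysis on each surviving candidate graph: at an interior optimum the weights satisfy stationarity conditions coupling the triangle-density gradient to the three edge-density-constraint gradients, and solving these (a small polynomial system, one per graph) gives a closed-form candidate value of $t$; comparing these closed forms across graphs, and against the $R_1$ value on the shared boundary, isolates the true minimizers. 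Boundary cases, where some vertex weight is $0$ or an edge density constraint is slack in a degenerate way, reduce to smaller graphs already covered, so they need separate but routine handling.

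The main obstacle I expect is twofold. First, controlling the combinatorial explosion: even with the three-vertices-per-class bound there are a large number of tripartite graphs up to strong-isomorphism, and verifying by hand that each non-candidate graph strictly loses is infeasible --- this is precisely why the argument is outsourced to a verified computer search, and the real content is setting up that search so that its (finite, rational-arithmetic or interval-arithmetic) output is a rigorous proof rather than numerical evidence. Second, within $R_2$ the function $T_{\min}$ is genuinely nonlinear (unlike the linear formula on $R_1$), so one cannot simply exhibit a single extremal graph with a single formula; instead the three graphs $H_7, H_7', H_9$ each govern a subregion of $R_2$, and I would need to determine these subregions and check that the three pieces agree on their common boundaries, which is a delicate piecewise-algebraic bookkeeping task. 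The cleanest route is: (1) prove via the search that any extremal vertex-minimal $G$ on $R_2$ must be one of these three; (2) separately (in the proof of Theorem \ref{Thm_H9}) verify that each is realizable with the right densities and compute its triangle density; and (3) defer the comparison of which dominates where to that later theorem, so that Theorem \ref{Thm_H7H7'H9} as stated only needs the structural classification, not the explicit value of $T_{\min}$.
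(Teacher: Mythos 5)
There is a genuine gap, and it sits exactly where you outsource the work to the computer. You propose that the search should, ``for each graph on at most three vertices per class, check whether it can beat or tie the triangle density achieved by $H_7$, $H_7'$, $H_9$ somewhere in $R_2$.'' That is a \emph{parametric} polynomial optimization problem: for each of the $\approx 2^{27}$ graphs you would have to decide, for every $(\abg)$ in the open semialgebraic region $R_2$, how its minimum triangle density compares with a minimum over other graphs. Interval or rational arithmetic cannot certify the ties you would need to detect (an extremal graph by definition \emph{ties} the optimum), and quantifier elimination at this scale is not a routine computation; moreover such a comparison would not by itself distinguish ``extremal'' from ``extremal and vertex minimal.'' The paper's key idea, which your plan is missing, is that the computer never sees any weights at all: one first proves, by hand, weight-\emph{independent} structural elimination criteria --- the doubly-weighted graph machinery ($\Split$, $\Merge$, $\Reduce$), the neighbourhood-containment condition of Corollary \ref{Order_subset} ($C_{a_0b_0}\subsetneq C_{a_1b_1}$ with $a_0b_0\notin E$, $a_1b_1\in E$), Lemmas \ref{ClassSize1_NotExtremal}, \ref{CollapseClass}, \ref{OppositeClassSizeIs3}, \ref{CollapsePairOfVertices}, \ref{ReplaceBy8}, plus the two exceptional graphs $F_7$ and $F_9$ --- and the search merely checks these purely combinatorial conditions (together with triangle existence via Theorem \ref{Thm_T=0}) on each graph with two or three vertices per class. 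This is an exact, finite graph-theoretic computation, trivially rigorous, and it leaves exactly fourteen graphs.

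The second divergence is in how the survivors are killed. You propose a uniform Lagrange/KKT analysis and a comparison of closed-form stationary values across graphs; the paper instead eliminates the eleven unwanted graphs one by one with bespoke arguments, and the main quantitative tool is not a stationarity comparison but the strict upper bound of Lemma \ref{CyclicIneq_NotExtremal}, $T_{\min}(\abg)<\min\{\alpha\beta+\gamma-1,\alpha\gamma+\beta-1,\beta\gamma+\alpha-1\}$, obtained from an explicit weighting of $H_7$ (Lemma \ref{7Vertex_MinDensity}); several graphs are then shown to have triangle density at least one of these cyclic expressions, others are transformed into already-eliminated graphs via $\Merge$/$\Split$, $G_9$ is excluded by showing $\Delta\geq 0$, and $\Gxiv$ requires a complementation trick ($\alpha'=3-4\alpha$, $t'=1-4t$). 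These steps are not the ``routine handling'' of boundary cases your sketch anticipates; without the structural lemmas and the $H_7$-based bound, the finite search you describe does not close. (Your point (3) --- that this theorem needs only the structural classification and not the identification of which of $H_7,H_7',H_9$ wins where --- is correct and matches the paper.)
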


\begin{proof}[Proof of Theorem \ref{Thm_H9}]
The graphs $H_6$, $H_7$ and $H_7'$ are induced subgraphs of $H_9$
hence Theorems \ref{Thm_H6} and \ref{Thm_H7H7'H9} imply Theorem
\ref{Thm_H9}.
\end{proof}

We conjecture that in fact the extremal graph is always an
appropriate weighting of $H_7$. This would also give a simple
formula for $T_{\min}(\abg)$. See section \ref{Sec_Conjectures} for
details.

\section{Proof of Theorem \ref{Thm_H6} (the region $R_1$)}

\begin{lemma}\label{a+b+g-2_LowerBound}
For any $\abg\in [0,1]$ and $(G,w)\in\Tri(\abg)$ we have
\begin{align*}
t(G,w)\geq\alpha+\beta+\gamma-2.
\end{align*}
\end{lemma}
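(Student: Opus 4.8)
The plan is to bound the triangle density from below by exploiting an inclusion–exclusion / union-bound argument at the level of individual triples. Fix $(G,w)\in\Tri(\abg)$ and consider the probability space in which we independently pick $a\in A$, $b\in B$, $c\in C$, each vertex $v$ chosen with probability $w(v)$ (legitimate since the weights in each class sum to $1$). Under this measure, $\gamma=\Pr[ab\in E]$, $\beta=\Pr[ac\in E]$, $\alpha=\Pr[bc\in E]$, and $t(G,w)=\Pr[ab,ac,bc\in E]$, i.e.\ the probability that $\{a,b,c\}$ is a triangle.

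The key step is the elementary probabilistic inequality $\Pr[X_1\cap X_2\cap X_3]\geq \Pr[X_1]+\Pr[X_2]+\Pr[X_3]-2$, valid for any three events: this is just Bonferroni/union bound applied to the complements, since $\Pr[\overline{X_1}\cup\overline{X_2}\cup\overline{X_3}]\leq \Pr[\overline{X_1}]+\Pr[\overline{X_2}]+\Pr[\overline{X_3}]$ gives $1-\Pr[X_1\cap X_2\cap X_3]\leq 3-\alpha-\beta-\gamma$. Applying this with $X_1=\{ab\in E\}$, $X_2=\{ac\in E\}$, $X_3=\{bc\in E\}$ yields $t(G,w)\geq \alpha+\beta+\gamma-2$ directly.

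Alternatively, and perhaps more in the combinatorial spirit of the paper, one can argue without probabilistic language: for each triple $(a,b,c)\in A\times B\times C$, the indicator $\mathbf{1}[ab\in E]+\mathbf{1}[ac\in E]+\mathbf{1}[bc\in E] - \mathbf{1}[abc\in T(G)] \leq 2$, since at most two of the three edges can be present without the triple being a triangle (and if all three are present the left side is $3-1=2$). Multiplying by $w(a)w(b)w(c)$ and summing over all triples, the three edge-indicator sums contribute $\gamma$, $\beta$, $\alpha$ respectively (each edge $xy$ appearing with total weight $w(x)w(y)$ times the weight of the third class, which is $1$), the triangle term contributes $t(G,w)$, and the right side contributes $2$. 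Rearranging gives the claim.

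There is essentially no obstacle here: both arguments are a one-line union bound once the right weighting of triples is set up. The only thing to be careful about is the bookkeeping that "summing $w(a)w(b)w(c)\mathbf{1}[ab\in E]$ over all $(a,b,c)$" really does collapse to $\gamma$ — this uses precisely that $\sum_{c\in C}w(c)=1$, and similarly for the other two terms — so I would state that normalisation explicitly before summing.
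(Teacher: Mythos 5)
Your proposal is correct and is essentially the paper's own argument: the paper proves the same pointwise bound, that for each triple $(a,b,c)$ the weighted sum of edge indicators exceeds the triangle indicator by at most $2$, and then sums over $A\times B\times C$ exactly as in your second (combinatorial) formulation, with your probabilistic version being merely the same inequality in different language. No gap to report.
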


\begin{proof}
Define
\begin{align*}
\mathbf{1}_{xy} =
\begin{cases}
1, &\text{if $xy\in E(G)$},\\
0, &\text{otherwise},
\end{cases}\qquad\qquad\mathbf{1}_{xyz} =
\begin{cases}
1, &\text{if $xyz\in T(G)$},\\
0, &\text{otherwise}.
\end{cases}
\end{align*}
Given $abc\in A\times B\times C$, the number of edges present
between these three vertices is at most two unless $abc$ forms a
triangle. Hence
\begin{align}\label{Eq_SumIndicator}
\sum_{abc\in A\times B\times
C}w(abc)(\mathbf{1}_{bc}+\mathbf{1}_{ac}+\mathbf{1}_{ab})\leq\sum_{abc\in
A\times B\times C}w(abc)(2+\mathbf{1}_{abc}).
\end{align}
The LHS of (\ref{Eq_SumIndicator}) sums to $\alpha+\beta+\gamma$,
while the RHS is $2+t(G,w)$. Therefore
$t(G,w)\geq\alpha+\beta+\gamma-2$.
\end{proof}

\begin{lemma}\label{6Vertex_IsBest}
If $w$ is a weighting of $H_6$ satisfying $(H_6,w)\in\Tri(\abg)$
then
\begin{align*}
t(H_6,w)=\alpha+\beta+\gamma-2=T_{\min}(\abg).
\end{align*}
\end{lemma}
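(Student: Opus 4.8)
The plan is to sandwich $t(H_6,w)$ between the universal lower bound $\alpha+\beta+\gamma-2$ of Lemma~\ref{a+b+g-2_LowerBound} and a matching upper bound coming from the structure of $H_6$. Since Lemma~\ref{a+b+g-2_LowerBound} already gives $t(H_6,w)\ge\alpha+\beta+\gamma-2$ for free, everything reduces to the reverse inequality, and the cleanest route is to revisit the proof of Lemma~\ref{a+b+g-2_LowerBound} and pin down when its key inequality~\eqref{Eq_SumIndicator} is an equality.

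Recall that \eqref{Eq_SumIndicator} is obtained by summing, over all transversal triples $abc\in A\times B\times C$, the termwise bound $\mathbf{1}_{bc}+\mathbf{1}_{ac}+\mathbf{1}_{ab}\le 2+\mathbf{1}_{abc}$. For a fixed triple this termwise bound is an equality precisely when $abc$ is a triangle or exactly one of the pairs $ab,ac,bc$ is absent --- equivalently, when the tripartite complement $\overline{H_6}$ contains at most one of the three pairs. So the one genuine step is the observation, read off from Figure~\ref{Fig_H6}, that $\overline{H_6}$ is a perfect matching of the complete tripartite graph on the vertex classes $A,B,C$ (in particular no two of its edges meet). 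Given this, any two of the three pairs inside a transversal triple share a vertex, so a matching can contain at most one of them; hence every term in the derivation of \eqref{Eq_SumIndicator} is an equality, the two sides of \eqref{Eq_SumIndicator} coincide, and we obtain $t(H_6,w)=\alpha+\beta+\gamma-2$. Note that this identity does not depend on the particular weighting $w$: only the edge densities enter.

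Finally, for the identification with $T_{\min}$: the hypothesis that such a $w$ exists means $\Tri(\abg)\ne\emptyset$, so $T_{\min}(\abg)\le t(H_6,w)=\alpha+\beta+\gamma-2$, while Lemma~\ref{a+b+g-2_LowerBound} applied to any minimiser gives $T_{\min}(\abg)\ge\alpha+\beta+\gamma-2$; hence $t(H_6,w)=\alpha+\beta+\gamma-2=T_{\min}(\abg)$. I do not expect a real obstacle here: the whole argument is the single remark that equality in \eqref{Eq_SumIndicator} is a purely local condition on $\overline{H_6}$, together with the (finite, figure-level) verification that $\overline{H_6}$ is a perfect matching. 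The only thing that needs a little care is correctly reading the edge set of $\overline{H_6}$ from Figure~\ref{Fig_H6}; everything downstream is then forced.
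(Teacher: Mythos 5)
Your proof is correct, but it takes a different route from the paper's. The paper proves the identity $t(H_6,w)=\alpha+\beta+\gamma-2$ by brute force: it writes $\alpha,\beta,\gamma$ and $t(H_6,w)$ explicitly in terms of the three free weights $\wa_1,\wb_1,\wc_1$ (using that the complement consists of the three edges $a_2b_1$, $a_1c_2$, $b_2c_1$, so $t=\wa_1\wb_1\wc_1+(1-\wa_1)(1-\wb_1)(1-\wc_1)$) and checks the polynomial identity directly. You instead analyse the equality case of \eqref{Eq_SumIndicator}: the termwise bound $\mathbf{1}_{bc}+\mathbf{1}_{ac}+\mathbf{1}_{ab}\le 2+\mathbf{1}_{abc}$ is tight exactly when the tripartite complement meets the triple $\{ab,ac,bc\}$ in at most one pair, and since any two of these pairs share a vertex while $\overline{H_6}$ is a perfect matching, every term is tight. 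Both arguments are short and both ultimately rest on reading the edge set of $\overline{H_6}$ off Figure~\ref{Fig_H6}, but yours is more conceptual and strictly more general: it shows $t(G,w)=\alpha+\beta+\gamma-2$ for every weighting of any tripartite graph whose tripartite complement has no vertex with non-neighbours in both other classes, whereas the paper's computation is specific to $H_6$. The concluding step identifying this common value with $T_{\min}(\abg)$ is the same in both (Lemma~\ref{a+b+g-2_LowerBound} applied to an arbitrary element of $\Tri(\abg)$).
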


For ease of notation the weight associated with a vertex is
indicated with a hat above the label, for example $w(b_1)$ is
represented as $\wb_1$.

\begin{proof}
Consider a general weighting of $H_6$ with vertices as labelled in
Figure \ref{Fig_H6}. We know $\wa_2=1-\wa_1, \wb_2=1-\wb_1$ and
$\wc_2=1-\wc_1$ since the sum of the weights of the vertices in a
class add up to one. Hence we can express the densities in terms of
only $\wa_1, \wb_1,$ and $\wc_1$. The edge densities of $H_6$ are
\begin{align*}
\alpha = 1-\wc_1+\wb_1\wc_1,\quad \beta = 1-\wa_1+\wa_1\wc_1,\quad
\gamma = 1-\wb_1+\wa_1\wb_1.
\end{align*}
The triangle density is given by
\begin{align*}
t(H_6,w) &= \wa_1\wb_1\wc_1+(1-\wa_1)(1-\wb_1)(1-\wc_1)\\
&=1-\wa_1-\wb_1-\wc_1+\wa_1\wb_1+\wa_1\wc_1+\wb_1\wc_1\\
&=\alpha+\beta+\gamma-2.
\end{align*}
By Lemma \ref{a+b+g-2_LowerBound} we have $t(H_6,w)=T_{\min}(\abg)$.
\end{proof}

We now need to determine for which $(\abg)\in R$ a weighting $w$
exists such that $(H_6,w)\in\Tri(\abg)$

\begin{lemma}\label{abgInR3_implies_Not01}
\begin{enumerate}
\item[(i)] If  $(\abg)\in R$ then $\abg>0$.
\item[(ii)] If $(\abg)\in R_2$ then $0<\abg<1$.
\end{enumerate}
\end{lemma}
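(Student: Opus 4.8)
The plan is to prove each part by a short contradiction argument, using the defining inequalities of $R$ directly. For part (i), suppose $(\abg)\in R$ but one of the densities is zero, say $\alpha=0$ by symmetry of the three defining inequalities under permutation of $(\abg)$. Then the inequality $\beta\gamma+\alpha>1$ becomes $\beta\gamma>1$, which is impossible since $\beta,\gamma\in[0,1]$ forces $\beta\gamma\leq 1$. Hence $\alpha>0$, and by symmetry $\beta,\gamma>0$ as well. This settles (i).

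For part (ii), let $(\abg)\in R_2$. By (i) we already have $\abg>0$, so it remains to show none of them equals $1$. Suppose for contradiction that $\alpha=1$ (again the three variables play symmetric roles, so this loses no generality). The strategy is to show that $\alpha=1$ forces $\Delta(\abg)\geq 0$, contradicting $(\abg)\in R_2 = R\setminus R_1$. Substituting $\alpha=1$ into
\[
\Delta(\abg)= \alpha^2+\beta^2+\gamma^2-2\alpha\beta-2\alpha\gamma-2\beta\gamma+4\alpha\beta\gamma
\]
gives $\Delta(1,\beta,\gamma)=1+\beta^2+\gamma^2-2\beta-2\gamma-2\beta\gamma+4\beta\gamma = 1+\beta^2+\gamma^2-2\beta-2\gamma+2\beta\gamma = (1-\beta-\gamma)^2 \geq 0$. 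Wait --- more carefully: $1+\beta^2+\gamma^2-2\beta-2\gamma+2\beta\gamma$, and $(1-\beta-\gamma)^2 = 1+\beta^2+\gamma^2-2\beta-2\gamma+2\beta\gamma$, so indeed $\Delta(1,\beta,\gamma)=(1-\beta-\gamma)^2\geq 0$. Thus $(\abg)\in R_1$, contradicting $(\abg)\in R_2$. Hence $\alpha<1$, and by symmetry $\beta,\gamma<1$, which combined with (i) gives $0<\abg<1$.

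The argument is essentially routine; the only real content is spotting the algebraic identity $\Delta(1,\beta,\gamma)=(1-\beta-\gamma)^2$, which makes the sign of $\Delta$ on the boundary face $\alpha=1$ manifest. I would double-check the coefficient bookkeeping in that substitution, since the $\Delta$ expression mixes signs, but no genuine obstacle is expected. One should also note explicitly at the outset that the defining conditions of $R$ (and the formula for $\Delta$) are symmetric under all permutations of $(\abg)$, so that treating the case $\alpha=0$ (resp. $\alpha=1$) indeed covers the other two cases.
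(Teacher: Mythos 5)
Your proposal is correct and follows essentially the same route as the paper: part (i) by contradiction from the defining inequalities of $R$ (the paper uses $\alpha\beta+\gamma>1$ to force $\gamma>1$, you use $\beta\gamma+\alpha>1$ to force $\beta\gamma>1$ — an immaterial difference), and part (ii) via the identity $\Delta(1,\beta,\gamma)=(1-\beta-\gamma)^2\geq 0$ contradicting $\Delta<0$ on $R_2$, exactly as in the paper.
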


\begin{proof}
If $(\abg)\in R$ and $\alpha=0$ then, since $\alpha\beta+\gamma>1$,
we have $\gamma>1$, a contradiction. Similarly  $\beta,\gamma> 0$.

If $(\abg)\in R_2$ then $R_2\subseteq R$ implies that $\abg>0$. If
$\alpha=1$ then
$\Delta(\abg)=\Delta(1,\beta,\gamma)=(1-\beta-\gamma)^2\geq 0$. But
$(\abg)\in R_2$ implies that $\Delta(\abg)<0$, a contradiction.
Similarly $\beta,\gamma< 1$.
\end{proof}

\begin{lemma}\label{6Vertex_ValidRegion}
For $(\abg)\in R$ there exists a weighting $w$ of $H_6$ such that
$(H_6,w)\in\Tri(\abg)$ if and only if $(\abg)\in R_1$.
\end{lemma}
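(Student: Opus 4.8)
The plan is to convert the existence question into the solvability of an explicit polynomial system in the three vertex weights of $H_6$, reduce it to a single one-variable equation, compute the range of the associated function, and observe that the condition for the prescribed triple to lie in that range is exactly $\Delta(\abg)\ge 0$. Concretely, writing $x=\wa_1$, $y=\wb_1$, $z=\wc_1$ and $p=1-\alpha$, $q=1-\beta$, $r=1-\gamma$, the computation in the proof of Lemma~\ref{6Vertex_IsBest} shows that a weighting $w$ with $(H_6,w)\in\Tri(\abg)$ exists if and only if there are $x,y,z\in[0,1]$ with
\[
p=z(1-y),\qquad q=x(1-z),\qquad r=y(1-x).
\]
From $(\abg)\in R$ one has $\alpha+\beta\ge\alpha\gamma+\beta>1$ and cyclically, so $p+q,\,q+r,\,r+p<1$, and Lemma~\ref{abgInR3_implies_Not01} gives $\alpha,\beta,\gamma>0$, hence $p,q,r<1$. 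I would first dispose of the cases $\alpha=1$ or $\beta=1$ (i.e.\ $p=0$ or $q=0$): here a weighting can be exhibited directly, and $\Delta(1,\beta,\gamma)=(1-\beta-\gamma)^2\ge0$ (and symmetrically) shows such triples automatically lie in $R_1$, consistent with the statement; so from now on assume $0<\alpha,\beta<1$.

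The core step is a one-variable reduction. Treating $z$ as a parameter, the first two equations force $y=1-p/z$ and $x=q/(1-z)$, and these lie in $[0,1]$ exactly when $z\in[p,\,1-q]$, a nonempty interval since $p+q<1$. Substituting into the third equation, the system is solvable if and only if $r$ lies in the image of
\[
f(z)=\Bigl(1-\frac{p}{z}\Bigr)\Bigl(1-\frac{q}{1-z}\Bigr)=\frac{(z-p)(1-q-z)}{z(1-z)}
\]
over $[p,1-q]$. On this interval $f$ is continuous, nonnegative, and vanishes at both endpoints (here $p,q>0$ is used), so its image is $[0,M]$ with $M=\max_{[p,1-q]}f$; since $r\in[0,1)$, solvability is equivalent to $r\le M$.

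It remains to compute $M$ and identify the inequality $r\le M$. An elementary calculus computation locates the maximiser at $z^{*}=\sqrt{p(1-q)}\big/\bigl(\sqrt{p(1-q)}+\sqrt{q(1-p)}\bigr)$, which lies in $[p,1-q]$ because $p+q<1$, and evaluating there gives
\[
M=1-\bigl(\sqrt{p(1-q)}+\sqrt{q(1-p)}\bigr)^{2}=1-p-q+2pq-2\sqrt{pq(1-p)(1-q)}.
\]
Rewriting $r\le M$ in terms of $\abg$ (using $p=1-\alpha$, $q=1-\beta$, $r=1-\gamma$) turns it into
\[
\gamma-\alpha-\beta+2\alpha\beta\ \ge\ 2\sqrt{\alpha\beta(1-\alpha)(1-\beta)}.
\]
On $R$ the left-hand side is positive --- from $\gamma>1-\alpha\beta$ one gets $\gamma-\alpha-\beta+2\alpha\beta>(1-\alpha)(1-\beta)\ge0$ --- so squaring is reversible, and the identity
\[
(\gamma-\alpha-\beta+2\alpha\beta)^{2}-4\alpha\beta(1-\alpha)(1-\beta)=\Delta(\abg)
\]
shows that the condition is precisely $\Delta(\abg)\ge 0$, i.e.\ $(\abg)\in R_1$. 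I expect the main obstacle to be exactly these last computations: locating $z^{*}$ and evaluating $M$ in closed form, and then verifying that $r\le M$ rearranges into $\Delta(\abg)\ge 0$ --- a determinate but somewhat lengthy piece of algebra --- together with the bookkeeping for the degenerate cases $\alpha=1$ and $\beta=1$.
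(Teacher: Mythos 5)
Your proof is correct, but it takes a genuinely different route from the paper's in the key step. Both arguments start from the same system $1-\alpha=\wc_1(1-\wb_1)$, $1-\beta=\wa_1(1-\wc_1)$, $1-\gamma=\wb_1(1-\wa_1)$ and both split off the degenerate case where one density equals $1$. The paper then eliminates two variables to obtain a quadratic in $\wa_1$ whose discriminant is exactly $\Delta(\abg)$: necessity of $\Delta\geq 0$ falls out immediately from the realness of the weights, while sufficiency requires taking the positive root and verifying, using the inequalities defining $R$, that all three resulting weights lie in $(0,1)$. You instead keep $z=\wc_1$ as a free parameter, note that feasibility of $x,y$ pins $z$ to the interval $[1-\alpha,\,\beta]$, and characterise solvability as $1-\gamma$ lying in the range of a rational function $f$ on that interval; computing $\max f$ in closed form and squaring the resulting inequality recovers $\Delta(\abg)\geq 0$ via the identity $(\gamma-\alpha-\beta+2\alpha\beta)^2-4\alpha\beta(1-\alpha)(1-\beta)=\Delta(\abg)$ (which I checked, along with your maximiser $z^{*}$ and the value of $M$ --- all correct). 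Your approach handles both directions of the equivalence uniformly and builds the constraints $x,y,z\in[0,1]$ in from the start, at the price of a closed-form optimisation and one extra algebraic identity; the paper's approach makes $\Delta$ appear for free as a discriminant but must separately verify membership of the roots in $(0,1)$. The only loose end in your write-up is that the case $\alpha=1$ or $\beta=1$ is asserted rather than carried out, but the explicit weighting (e.g.\ $\wb_1=1$, $\wa_1=\gamma$, $\wc_1=(\beta+\gamma-1)/\gamma$ when $\alpha=1$, valid since $\beta+\gamma>1$ on $R$) is routine and is exactly what the paper does there.
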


\begin{proof}
If $(\abg)\in R$ then Lemma \ref{abgInR3_implies_Not01} (i) implies
that $\abg\neq 0$. First we will prove that if $(\abg)\in R$ and
there exists a weighting $w$ such that $(H_6,w)\in \Tri(\abg)$, then
$(\abg)\in R_1$.

Let us label the vertices of $H_6$ as in Figure \ref{Fig_H6}.
Suppose $w$ is weighting of $H_6$ such that $(H_6,w)\in\Tri(\abg)$.
The edge densities in terms of $\wa_1, \wb_1, \wc_1$ are
\begin{align}
\alpha &= 1-\wc_1+\wb_1\wc_1,\label{Eq_6V_Alpha}\\
\beta &= 1-\wa_1+\wa_1\wc_1,\label{Eq_6V_Beta}\\
\gamma &= 1-\wb_1+\wa_1\wb_1.\label{Eq_6V_Gamma}
\end{align}

\textit{Case 1: One of $\abg$ equals one.}\\
By Lemma \ref{abgInR3_implies_Not01} (ii) $(\abg)\notin R_2$. Hence
$(\abg)\in R$ implies $(\abg)\in R_1$.

\textit{Case 2: $\abg\neq 1$.}\\
Since $\abg\neq 1$ we have $\wa_1,\wb_1,\wc_1\neq 0,1$. Rearranging
(\ref{Eq_6V_Gamma}) and (\ref{Eq_6V_Beta}) we can write $\wb_1$ and
$\wc_1$ in terms of $\wa_1$
\begin{align}
\wb_1 &= \frac{1-\gamma}{1-\wa_1} \label{Eq_6V_b}\\
\wc_1 &= \frac{\wa_1+\beta-1}{\wa_1}. \label{Eq_6V_c}
\end{align}
Substituting into (\ref{Eq_6V_Alpha}) and simplifying gives
\begin{align*}
\alpha \wa_1^2+(-\alpha+\beta-\gamma)\wa_1+\gamma-\beta\gamma = 0.
\end{align*}
Hence
\begin{align}\label{Eq_6V_Quad_a}
\wa_1 &= \frac{\alpha-\beta+\gamma\pm\sqrt{\Delta(\abg)}}{2\alpha},
\intertext{substituting back into (\ref{Eq_6V_b}) and
(\ref{Eq_6V_c}) gives} \wb_1 &=
\frac{\alpha+\beta-\gamma\pm\sqrt{\Delta(\abg)}}{2\beta},
\label{Eq_6V_Quad_b}\\
\wc_1 &= \frac{-\alpha+\beta+\gamma\pm\sqrt{\Delta(\abg)}}{2\gamma}.
\label{Eq_6V_Quad_c}
\end{align}
By the definition of a weighting we have
$\wa_1,\wb_1,\wc_1\in\mathbb{R}$, hence $\Delta(\abg)\geq 0$, and so
$(\abg)\in R_1$.

Next we will show that if $(\abg)\in R_1$ then there exists a
weighting $w$ such that $(H_6,w)\in\Tri(\abg)$.

\textit{Case 1: One of $\abg$ equals one.}\\
Without loss of generality suppose $\alpha=1$. Since
$(1,\beta,\gamma)\in R_1\subseteq R$ we have $\beta+\gamma>1$. It is
easy to check that $\wa_1=\gamma, \wb_1=1,
\wc_1=(\beta+\gamma-1)/\gamma$ satisfy
(\ref{Eq_6V_Alpha}),(\ref{Eq_6V_Beta}),(\ref{Eq_6V_Gamma}) and
$\wa_1,\wb_1,\wc_1\in [0,1]$ when $\beta+\gamma>1$. This is enough
to define a weighting $w$ of $H_6$.

\textit{Case 2: $\abg\neq 1$.}\\
Since $\Delta(\abg)\geq 0$, we may define
$\wa_1,\wb_1,\wc_1\in\mathbb{R}$ by (\ref{Eq_6V_Quad_a}),
(\ref{Eq_6V_Quad_b}), (\ref{Eq_6V_Quad_c}), taking the positive
square root in each case. Due to the way $\wa_1,\wb_1,\wc_1$ were
constructed they satisfy (\ref{Eq_6V_Alpha}), (\ref{Eq_6V_Beta}),
(\ref{Eq_6V_Gamma}). Hence if $\wa_1,\wb_1,\wc_1$ form a weighting
$w$ we will have $(H_6,w)\in\Tri(\abg)$. We need only prove
$\wa_1,\wb_1,\wc_1\in(0,1)$.

We will prove $\wa_1\in(0,1)$, the proofs of $\wb_1,\wc_1\in(0,1)$
follow similarly. If $0<\alpha-\beta+\gamma$ then $0<\wa_1$ because
$\wa_1$ is the positive square root version of (\ref{Eq_6V_Quad_a}).
Now $(\abg)\in R$ implies
$0<\alpha\beta+\gamma-1<\alpha+\gamma-\beta$, and consequently
$0<\wa_1$. By (\ref{Eq_6V_Quad_a}) if
$\sqrt{\Delta(\abg)}<\alpha+\beta-\gamma$ then $\wa_1<1$. Again
$(\abg)\in R$ implies that $0<\alpha\gamma + \beta -1<\alpha+\beta
-\gamma$. Hence if we can show
$\Delta(\abg)<(\alpha+\beta-\gamma)^2$ we will be done. Expanding
and simplifying yields $0<4\alpha\beta(1-\gamma)$ which is true
because $\abg\in (0,1)$.
\end{proof}

\begin{proof}[Proof of Theorem \ref{Thm_H6}]
The result follows immediately from Lemma \ref{6Vertex_IsBest} and
\ref{6Vertex_ValidRegion}.
\end{proof}

\section{Proof of Theorem \ref{Thm_H7H7'H9} (the region $R_2$)}

We will begin by introducing a new type of graph in section
\ref{Sec_Properties} which will allow us to develop a series of
conditions that extremal vertex minimal examples must satisfy. In
section \ref{Sec_Search} we outline an algorithm that allows us to
utilize the results of section \ref{Sec_Properties} to search for
the extremal vertex minimal graphs in a finite time. This algorithm
produces fourteen possible graphs. In section
\ref{Sec_SpecificGraphs} we eliminate those not strongly-isomorphic
to $H_7, H_7'$ and $H_9$ by analysing each of them in turn.

\subsection{Properties}\label{Sec_Properties}
Our proof strategy for Theorem \ref{Thm_H7H7'H9} is to establish
various properties any extremal and vertex minimal weighted
tripartite graph must satisfy. To prove these properties we
introduce a new type of tripartite graph.

A \emph{doubly-weighted tripartite graph} $(G,w,p)$ is a weighted
tripartite graph $(G,w)\in\Tri$ together with a function $p: E(G)\to
(0,1]$. We denote the set of all doubly-weighted tripartite graphs
by $\DTri$. If $(G,w,p)\in\DTri$ then the \emph{weight} of an edge
$xy\in E(G)$ is defined to be
\begin{align*}
\lambda(xy) = w(xy)p(xy).
\end{align*}
The \emph{edge density} between a pair of vertex classes $X$ and $Y$
is
\begin{align*}
\sum_{xy\in E(X,Y)}\lambda(xy).
\end{align*}
The \emph{triangle density} is defined as
\begin{align*}
t(G,w,p) = \sum_{abc\in T(G)}p(ab)p(ac)p(bc)w(abc).
\end{align*}

Any $(G,w)\in\Tri$ may be converted into a doubly-weighted
tripartite graph $(G,w,p)$ with the same triangle and edge densities
by adding the function $p:E(G)\to (0,1]$, $p(e)=1$ for all $e\in
E(G)$. Our next result allows us to do the reverse and convert a
doubly-weighted tripartite graph into a weighted tripartite graph,
leaving triangle and edge densities unchanged.

\begin{lemma}\label{DoublyWeighted_SameAs_Weighted}
Given $(G,w,p)\in\DTri$ there exists $(G',w')\in\Tri$ with the same
triangle and edge densities.
\end{lemma}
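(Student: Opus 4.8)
The plan is to convert a doubly-weighted tripartite graph $(G,w,p)\in\DTri$ into an ordinary weighted tripartite graph by \emph{replacing each vertex with a small independent set}, using the edge functions $p$ to decide which copies are adjacent. The key idea is that a value $p(xy)\in(0,1]$ should be interpreted as a probability: if we split $x$ into copies and $y$ into copies, then adjacency should be present on a $p(xy)$-fraction of the resulting pairs. Making this work simultaneously for all edges at a vertex requires the right combinatorial bookkeeping, which is where the real content lies.

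First I would reduce to the case where $p$ takes rational values. Since $t(G,w,p)$ and the three edge densities are continuous (indeed polynomial) functions of the finitely many values $\{p(e):e\in E(G)\}$ with $w$ fixed, and the set of weighted graphs with a given vertex set and tripartition is closed, it suffices to approximate; alternatively one can argue that the construction below already produces, in the limit, a weighted graph with exactly the target densities, so I would instead keep $p$ general and take a limit at the end. Concretely, fix a large integer $N$ and, for each vertex $v$, replace $v$ by $N$ clones $v^{(1)},\dots,v^{(N)}$ each of weight $w(v)/N$; this preserves $\sum w = 1$ in each class. Now for an edge $xy\in E(G)$ I must choose which of the $N^2$ pairs $(x^{(i)},y^{(j)})$ to join so that the fraction joined is (approximately) $p(xy)$ — taking, say, $\lceil p(xy)N^2\rceil$ of them. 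Non-edges of $G$ contribute no edges among clones. Then the edge density between classes $X$ and $Y$ becomes $\sum_{xy\in E(X,Y)} w(x)w(y)\cdot\frac{\lceil p(xy)N^2\rceil}{N^2}\to \sum_{xy\in E(X,Y)}\lambda(xy)$ as $N\to\infty$, matching the doubly-weighted edge density.

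The subtle point is the triangle density. For a triangle $abc\in T(G)$, a triple of clones $(a^{(i)},b^{(j)},c^{(k)})$ forms a triangle in the blow-up iff all three pairs were selected, and its weight is $w(abc)/N^3$. So the contribution of $abc$ to the blow-up triangle density is $w(abc)$ times the fraction of triples $(i,j,k)$ that are ``triply selected''. I need this fraction to converge to $p(ab)p(ac)p(bc)$. This is exactly achievable if the three selection sets are chosen \emph{independently and uniformly at random} (or, to stay deterministic, via a quasirandom/product construction): for random selections of sizes $\approx p(ab)N^2$, $\approx p(ac)N^2$, $\approx p(bc)N^2$ among the respective $N\times N$ grids, the expected fraction of triply-selected $(i,j,k)$ is $p(ab)p(ac)p(bc)+o(1)$, and concentration lets me fix one good choice. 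Summing over all triangles of $G$ gives $t(G,w,p)+o(1)$. So the hard part — and the step I'd expect to need the most care — is arranging the edge selections at each vertex pair so that \emph{every} triangle simultaneously sees the correct product density; the clean way is the probabilistic argument (choose all $|E(G)|$ selection sets independently at random, then union-bound over the finitely many edge-density and triangle-density statistics, each of which concentrates around its mean by a standard Chernoff/Azuma estimate), after which one good outcome yields $(G',w')\in\Tri$ with edge and triangle densities within $\epsilon$ of those of $(G,w,p)$; finally, letting $\epsilon\to 0$ along a sequence and extracting a limiting weighted graph on a bounded vertex set (or invoking the already-available finiteness reductions) upgrades ``within $\epsilon$'' to exact equality.

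I should also double-check the degenerate bookkeeping: $p(e)\le 1$ guarantees $\lceil p(e)N^2\rceil\le N^2$ for large $N$ so the selection is feasible, and $p(e)>0$ is not strictly needed for the construction but is harmless. Vertices of weight $0$ can be kept or discarded freely. No macro beyond $\DTri$, $\Tri$, $\lambda$ — all defined above — is required.
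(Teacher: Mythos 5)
Your construction is genuinely different from the paper's (which removes each partial edge exactly by an asymmetric vertex-splitting operation $\Split$, with a potential function $Z=\sum_{v\in A}3^{d_z(v)}$ to prove termination), but as written it has a real gap: it only produces, for each $N$, a weighted graph whose edge and triangle densities are within $o(1)$ of those of $(G,w,p)$, whereas the lemma asserts \emph{exact} equality, and the paper uses it in both directions (e.g.\ to show $\Tri(\abg)\neq\emptyset$ with prescribed densities, and inside Lemma \ref{ClassSize1_NotExtremal} where the doubly-weighted triangle density is computed exactly). Your proposed upgrade from ``within $\epsilon$'' to equality does not go through: the blow-ups have $3N$ vertices, so there is no compact family in which to extract a limit, and the ``already-available finiteness reductions'' ($\Reduce$) preserve edge densities but may strictly \emph{decrease} the triangle density, so they cannot be used to pin the triangle density at the target value. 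The randomized selection also cannot simply be derandomized to an exact count: you would need every triangle $abc$ of $G$ simultaneously to have exactly $p(ab)p(ac)p(bc)N^3$ triply-selected clone triples, and no construction achieving this is given. Note that the obvious ``product'' fix --- marking a $p(xy)$-fraction of the clones of $x$ and of $y$ and joining marked pairs --- fails already at the edge-density level, giving $w(xy)p(xy)^2$ instead of $\lambda(xy)$; the correct exact realization is inherently asymmetric (split only one endpoint of each partial edge into an adjacent part of relative weight $p(xy)$ and a non-adjacent part of relative weight $1-p(xy)$), which is precisely the paper's $\Split$ algorithm. If you adopt that exact local move, you then still need an argument that iterating it terminates, since splitting a vertex duplicates its other partial edges; the paper handles this with the exponential potential $Z$.

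A smaller point: your opening reduction to rational $p$ ``by continuity'' has the same defect --- it changes the target densities, and there is no mechanism offered for recovering the original ones exactly at the end.
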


For $(G,w,p)\in\DTri$ we will say that $e\in E(G)$ is a
\emph{partial edge} if $p(e)<1$. To prove Lemma
\ref{DoublyWeighted_SameAs_Weighted} we need a process to eliminate
partial edges without affecting any of the densities.

For a graph $G$ and vertex $v\in V(G)$ let $\Gamma^G(v)$ denote the
neighbourhood of $v$ in $G$. When no confusion can arise we write
this simply as $\Gamma(v)$. Given a tripartite graph $G$ with a
vertex class $X$ and $v\in V(G)$ we write
$\Gamma_X^G(v)=\Gamma^G(v)\cap X$. Again when no confusion can arise
we write this simply as $\Gamma_X(v)$.
\begin{figure}[tbp]
\begin{center}
\includegraphics[height=4cm]{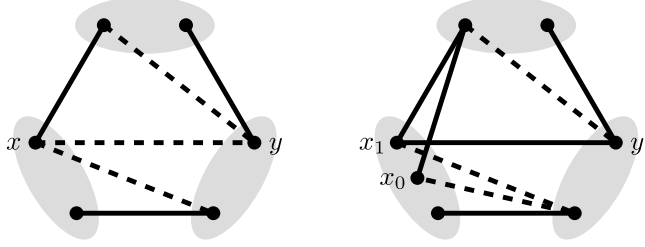}
\caption{An example of $(G,w,p)$ and $\Split(G,w,p,x,y)$. Partial
edges are represented by dotted lines, the solid lines are edges
which $p$ maps to $1$.}\label{Fig_Split}
\end{center}
\end{figure}
\begin{algorithm}[$\Split$]
The algorithm $\Split$ takes as input $(G,w,p)\in\DTri$ and an
ordered pair of vertices $(x,y)$, such that $xy$ is a partial edge.
Its output, $\Split(G,w,p,x,y)$, is a doubly-weighted tripartite
graph, which no longer contains the partial edge $xy$. If
$(G',w',p')=\Split(G,w,p,x,y)$ then $G',w',p'$ are formed as
follows:
\begin{itemize}
\item Construct $G'$ from $G$ by replacing the vertex $x$ by two new
vertices $x_0$ and $x_1$ that lie in the same vertex class as $x$. Add
edges from $x_0, x_1$ so that $\Gamma^{G'}(x_0)=\Gamma^{G}(x)\setminus \{y\}$
and $\Gamma^{G'}(x_1)=\Gamma^{G}(x)$.
\item Set $w'(x_0)=w(x)(1-p(xy))$ and $w'(x_1)=w(x)p(xy)$.
Let $w'(v)=w(v)$ for all $v\in V(G)\setminus\{x\}$.
\item Set $p'(x_0v)=p'(x_1v)=p(xv)$ for all $v\in \Gamma^{G}(x)\setminus\{y\}$,
and $p'(x_1y)=1$. Let $p'(uv)=p(uv)$ for all $uv\in E(G)$ such that
$u,v\neq x$.
\end{itemize}
Note that in $\Split(G,w,p,y,x)$ (the result of applying $\Split$ to
$(G,w,p)$ and $(y,x)$) the vertex $y$  would have been ``split''
into two new vertices rather than $x$. It also does not contain the
partial edge $xy$. So if we wish to remove the partial edge $xy$ we
can choose between $\Split(G,w,p,x,y)$ and $\Split(G,w,p,y,x)$.
\end{algorithm}
Figure \ref{Fig_Split} shows an example application of $\Split$ with
``before'' and ``after'' pictures of $(G,w,p)$ and
$\Split(G,w,p,x,y)$.

\begin{lemma}\label{SplitDensity}
For any $(G,w,p)\in\DTri$ and $xy$ a partial edge,
$(G',w',p')=\Split(G,w,p,x,y)$ has the same triangle and edge
densities as $(G,w,p)$.
\end{lemma}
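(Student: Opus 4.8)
The plan is to verify directly, from the definitions, that $(G',w',p')=\Split(G,w,p,x,y)$ has the same three edge densities and the same triangle density as $(G,w,p)$. By construction every vertex, edge, weight and $p$-value of $G$ not involving $x$ is copied verbatim into $G'$, so only the contributions ``through $x$'' need to be compared. Throughout I would assume without loss of generality that $x\in A$, and, since $xy\in E(G)$, that $y\in B$. The single identity driving the whole argument is
\[
w'(x_0)+w'(x_1)=w(x)\bigl(1-p(xy)\bigr)+w(x)p(xy)=w(x),
\]
which already shows the class-sum condition is preserved, so that $(G',w',p')$ is a genuine (doubly-)weighted tripartite graph. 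Its role is this: whenever an edge or triangle of $G$ uses $x$ only via incidences whose $p$-values are unchanged by $\Split$, that object splits into a copy through $x_0$ and a copy through $x_1$ whose $w$-weights sum back to $w(x)$, so the pair recombines to exactly the original contribution.

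For the edge densities, the pair $(B,C)$ is untouched. For $(A,C)$, since $y\notin C$ we have $\Gamma_C^{G'}(x_0)=\Gamma_C^{G'}(x_1)=\Gamma_C^{G}(x)$ with $p'=p$ on all these incidences, so for each $c$ the two new edge weights sum to $\bigl(w'(x_0)+w'(x_1)\bigr)w(c)p(xc)=w(x)w(c)p(xc)$, the original weight of $xc$; summing over $c$ gives the same density. For $(A,B)$ the same cancellation handles every $b\in\Gamma_B(x)\setminus\{y\}$; the only edge needing separate treatment is the one to $y$, which survives only at $x_1$, now with $p'(x_1y)=1$, contributing $w'(x_1)w(y)\cdot 1=w(x)p(xy)w(y)=w(xy)p(xy)$ --- precisely the original contribution of the partial edge $xy$. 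Hence all three edge densities agree.

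For the triangle density, a triangle of $G$ avoiding $x$ reappears unchanged in $G'$. A triangle $xbc$ of $G$ with $b\neq y$ lifts to the two triangles $x_0bc,x_1bc$ of $G'$, whose $p$-products both equal $p(xb)p(xc)p(bc)$ and whose $w$-weights sum to $w(x)w(b)w(c)=w(xbc)$, so together they contribute the original value $p(xb)p(xc)p(bc)w(xbc)$. A triangle $xyc$ of $G$ lifts only to $x_1yc$, since $x_0$ is not adjacent to $y$; there the factor $p(xy)$ has been moved out of the $p$-product, now $p'(x_1y)p'(x_1c)p(yc)=1\cdot p(xc)p(yc)$, and into the weight $w'(x_1)w(y)w(c)=w(x)p(xy)w(y)w(c)$, so the product is again $p(xy)p(xc)p(yc)w(xyc)$. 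Summing over all triangles of $G$ yields $t(G',w',p')=t(G,w,p)$. The whole computation is routine; the only care required is the bookkeeping --- separating the three regimes (incidences and triangles that avoid $y$, the edge $xy$ itself, and the triangles through $xy$) and checking that in the first regime the two new copies recombine via $w'(x_0)+w'(x_1)=w(x)$, while in the other two the single surviving copy at $x_1$ merely trades a factor $p(xy)$ between its $p$-value and its weight. That is the step I would write out in full detail.
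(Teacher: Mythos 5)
Your proposal is correct and follows essentially the same route as the paper: both reduce everything to the identities $w'(x_0)+w'(x_1)=w(x)$ and $w'(x_1)p'(x_1y)=w(x)p(xy)$, and verify edge-by-edge and triangle-by-triangle (separating the cases avoiding $y$ from those through $xy$) that the contributions are unchanged. The paper phrases this as showing the differences vanish, but the computation is identical.
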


\begin{proof}
Without loss of generality let us assume $x\in A$ and $y\in B$. We
will prove the result by calculating the difference in densities
between $(G',w',p')$ and $(G,w,p)$ and showing them to be zero. The
change in the edge density between classes $A$ and $B$ is
\begin{multline*}
w(y)(w'(x_1)p'(x_1y)-w(x)p(xy))+\\
\sum_{v\in \Gamma_B^G(x)\setminus\{y\}}
w(v)(w'(x_0)p'(x_0v)+w'(x_1)p'(x_1v)-w(x)p(xv))
\end{multline*}
which is zero. Similarly the change in density between classes $A$
and $C$ is zero. There is no change in the density between classes
$B$ and $C$ since the algorithm $\Split$ leaves this part of the
graph untouched. The change in the triangle density is
\begin{multline*}
\sum_{xuv\in T(G), u\in B\setminus\{y\}, v\in
C}\Bigl(w'(x_0)+w'(x_1)-w(x)\Bigr)
w(u)w(v)p(xu)p(xv)p(uv)+\\
\sum_{xyv\in T(G), v\in C}\Bigl(w'(x_1)p'(x_1y)-w(x)p(xy)\Bigr)
w(y)w(v)p(xv)p(yv),
\end{multline*}
which is zero, hence the triangle and edge densities do not change.
\end{proof}

\begin{proof}[Proof of Lemma \ref{DoublyWeighted_SameAs_Weighted}]
Given $(G,w,p)\in\DTri$, if $p(e)=1$ for all $e\in E(G)$ then the
weighted tripartite graph $(G,w)$ will have the same densities as
the doubly-weighted tripartite graph.

Suppose $(G,w,p)$ contains a partial edge $av$, with $a\in A$. We
can remove this partial edge by replacing $(G,w,p)$ by
$\Split(G,w,p,a,v)$. Unfortunately this may introduce new partial
edges. However, we can show that by repeated applications of
$\Split$ we will eventually remove all partial edges. Consider
\[
Z(G,w,p) = \sum_{v\in A}3^{d_z(v)},
\]
where
\[
d_z(v) = |\{u\in V(G) : uv\in E(G), p(uv)\neq 1\}|.
\]
If $(G',w',p')=\Split(G,w,p,a,v)$ then $Z(G',w',p')<Z(G,w,p)$. This
is because $\Split$ replaces vertex $a$ with the vertices $a_0$ and
$a_1$, and so $Z$ changes by
\begin{align*}
3^{d_z(a_0)}+3^{d_z(a_1)}-3^{d_z(a)} &=
3^{d_z(a)-1}+3^{d_z(a)-1}-3^{d_z(a)}\\
&= -3^{d_z(a)-1}.
\end{align*}
Since $Z$ is integral and is bounded below (by zero for instance),
repeatedly applying $\Split$ will eventually remove all partial
edges incident with $A$. Note that doing this will not have created
any new partial edges between classes $B$ and $C$.

We can repeat this process on the partial edges leaving $B$, to get
rid of the remaining partial edges. Let us call the resulting
doubly-weighted tripartite graph $(G'',w'',p'')$. Since we created
$(G'',w'',p'')$ only by applying $\Split$, by Lemma
\ref{SplitDensity}, $(G,w,p)$ and $(G'',w'',p'')$ must have the same
edge and triangle densities. Since $(G'',w'',p'')$ has no partial
edges, $p''(e)=1$ for all $e\in E(G'')$, consequently $(G'',w'')$
has the same edge and triangle densities as $(G'',w'',p'')$ and
therefore $(G,w,p)$.
\end{proof}

Since we can convert easily between weighted and doubly-weighted
tripartite graphs, it is useful to know when there exist
doubly-weighted tripartite graphs with the same edge densities but
with smaller triangle densities. Let $(G,w,p)$ be a doubly-weighted
tripartite graph. By carefully modifying $p$ we can adjust the
weights of edges whilst not affecting the edge densities and
potentially decreasing the triangle density. Our next result lists a
series of conditions under which this could occur.

Let $G$ be a tripartite graph with vertex classes $A,B,C$. For $a\in
A, b\in B$ define
\[
C_{ab}= \{c\in C : ac,bc\in E(G)\}.
\]
\begin{lemma}\label{Order_Sset}
If $(G,w,p)\in\DTri$ satisfies conditions $(i)-(iv)$, given below,
then there exists $(G',w,p')\in\DTri$ with the same edge densities
as $(G,w,p)$ but $t(G',w,p')<t(G,w,p)$.
\begin{itemize}
\item[(i)] $w(v)>0$ for all $v\in V(G)$,
\item[(ii)] $p(e)=1$ for all $e\in E(A,C)\cup E(B,C)$,
\item[(iii)] there exist, not necessarily distinct, vertices $a_0,a_1\in A$, $b_0,b_1\in B$ such that $a_1b_1\in E(G)$ and
either $a_0b_0\notin E(G)$ or $p(a_0b_0)< 1$,
\item[(iv)] $\displaystyle\sum_{c\in C_{a_0b_0}} w(c)<\displaystyle\sum_{c\in C_{a_1b_1}} w(c)$.
\end{itemize}
\end{lemma}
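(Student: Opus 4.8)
The plan is to modify only the function $p$ on the edges between classes $A$ and $B$, keeping $w$ and $G$ (and hence the set of triangles and the structure) fixed, and to perturb $p$ in a direction that preserves all three edge densities while strictly decreasing the triangle density. Since $p$ is constant equal to $1$ on $E(A,C)\cup E(B,C)$ by condition (ii), the densities $\alpha$ and $\beta$ depend only on $G$ and $w$, not on the values of $p$ on $E(A,B)$; so those two densities are automatically unchanged by any modification of $p$ restricted to $E(A,B)$. Only $\gamma$ (the density between $A$ and $B$) and the triangle density $t$ are at stake. The triangle density simplifies: using (ii),
\[
t(G,w,p)=\sum_{ab\in E(A,B)}p(ab)\,w(a)w(b)\sum_{c\in C_{ab}}w(c),
\]
and
\[
\gamma(G,w,p)=\sum_{ab\in E(A,B)}p(ab)\,w(a)w(b).
\]

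First I would use condition (iii) to pick the distinguished pairs $(a_0,b_0)$ and $(a_1,b_1)$: the edge $a_1b_1$ is present (possibly a full edge, possibly partial), while $a_0b_0$ is either absent or partial, i.e.\ there is ``room'' to increase $p$ (or introduce) on $a_0b_0$ and room to decrease $p$ on $a_1b_1$. The perturbation is: for a small $\varepsilon>0$, increase the weight on $a_0b_0$ by $\varepsilon$ and decrease the weight on $a_1b_1$ by a compensating amount so that $\gamma$ stays fixed. Concretely, since the contribution of edge $ab$ to $\gamma$ is $p(ab)w(a)w(b)$, I would raise $p(a_0b_0)$ (treating an absent edge as $p=0$ and now making it a genuine partial edge in $(0,1]$, which is allowed since the codomain is $(0,1]$ — here one must be slightly careful that $p$ must be strictly positive, so one adds the edge $a_0b_0$ to $E(G')$ with a small positive $p'$ value) and simultaneously lower $p(a_1b_1)$, choosing the two increments so that
\[
\Delta\big(p(a_0b_0)\big)\,w(a_0)w(b_0) = -\,\Delta\big(p(a_1b_1)\big)\,w(a_1)w(b_1) = \varepsilon.
\]
By (i) all vertex weights are positive, so these increments are well-defined small numbers, and for $\varepsilon$ small enough $p'$ still takes values in $(0,1]$ on every edge (in particular $p'(a_1b_1)>0$ because we only decrease it slightly; if $a_1b_1$ was a full edge it becomes partial, which is fine). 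The resulting $(G',w,p')$ — where $G'$ is $G$ possibly with the single extra edge $a_0b_0$ — lies in $\DTri$ and has the same $\alpha,\beta,\gamma$ as $(G,w,p)$.

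It remains to check that $t$ strictly decreases. The change in triangle density is exactly
\[
t(G',w,p') - t(G,w,p) = \varepsilon\!\!\sum_{c\in C_{a_0b_0}}\!\!w(c)\;-\;\varepsilon\!\!\sum_{c\in C_{a_1b_1}}\!\!w(c),
\]
because the edge $a_0b_0$ now carries weight contributing $\varepsilon$ times the total weight of common neighbours $C_{a_0b_0}$, while the edge $a_1b_1$ loses weight $\varepsilon$ times the total weight of $C_{a_1b_1}$, and no other triangle is affected. By condition (iv) this quantity is strictly negative, so $t(G',w,p')<t(G,w,p)$, as required.

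I expect the main obstacle to be bookkeeping rather than conceptual: one must handle the degenerate possibilities allowed by (iii) — namely that the vertices $a_0,a_1$ or $b_0,b_1$ may coincide, and that $a_0b_0$ may be a non-edge rather than a partial edge — and verify that in every such case the perturbation above still makes sense (still keeps $p'$ with values in $(0,1]$, still keeps $w$ a valid weighting since $w$ is untouched, and still yields the stated clean formula for the change in $t$). When $(a_0,b_0)=(a_1,b_1)$ is impossible since then $a_0b_0=a_1b_1\in E(G)$ contradicts the ``$a_0b_0\notin E(G)$ or $p(a_0b_0)<1$'' alternative only if $p(a_1b_1)=1$; but in that subcase $a_1=a_0$, $b_1=b_0$ forces $C_{a_0b_0}=C_{a_1b_1}$, contradicting (iv), so this degenerate overlap cannot occur and no special treatment is needed there. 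The only genuinely careful point is ensuring $p'$ stays strictly positive on the (at most one) newly created edge and on $a_1b_1$ after the decrease, which is guaranteed by taking $\varepsilon>0$ sufficiently small, using positivity of all vertex weights from (i).
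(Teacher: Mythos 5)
Your proposal is correct and is essentially the paper's own argument: you shift an amount $\varepsilon$ of $p$-weight (normalised by $w(a_0)w(b_0)$ and $w(a_1)w(b_1)$, which are positive by (i)) from the edge $a_1b_1$ onto $a_0b_0$ (adding that edge if absent), which by (ii) leaves $\alpha,\beta$ untouched, keeps $\gamma$ fixed by construction, and changes $t$ by exactly $\varepsilon\bigl(\sum_{c\in C_{a_0b_0}}w(c)-\sum_{c\in C_{a_1b_1}}w(c)\bigr)<0$ by (iv). Your side remark on the degenerate case $(a_0,b_0)=(a_1,b_1)$ is worded confusingly but reaches the right conclusion (it is excluded by (iv) since the two $C$-sets would coincide), and the rest matches the paper's proof with $\varepsilon$ playing the role of its $\delta$.
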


\begin{corollary}\label{Order_subset}
Let $(G,w)\in\Tri$. If there exist, not necessarily distinct,
vertices $a_0,a_1\in A$, $b_0,b_1\in B$ such that $a_0b_0\notin
E(G)$, $a_1b_1\in E(G)$ and $C_{a_0b_0}$ is a proper subset of
$C_{a_1b_1}$ then $(G,w)$ is either not extremal or not vertex
minimal.
\end{corollary}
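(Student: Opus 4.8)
The plan is to deduce Corollary \ref{Order_subset} directly from Lemma \ref{Order_Sset} by contraposition: suppose $(G,w)\in\Tri$ is both extremal and vertex minimal, and suppose for contradiction that there exist vertices $a_0,a_1\in A$, $b_0,b_1\in B$ with $a_0b_0\notin E(G)$, $a_1b_1\in E(G)$, and $C_{a_0b_0}\subsetneq C_{a_1b_1}$. We want to produce a weighted tripartite graph with the same edge densities as $(G,w)$ but strictly smaller triangle density, contradicting extremality (or, after reducing vertex count, contradicting vertex minimality).

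First I would pass from $(G,w)$ to a doubly-weighted graph by setting $p\equiv 1$ on all edges; this has the same edge and triangle densities, and it automatically satisfies conditions (ii) (since $p(e)=1$ everywhere) and (iii) (since $a_0b_0\notin E(G)$ already gives the required partial/absent edge, and $a_1b_1\in E(G)$). Next I would handle condition (i): if some vertices have weight $0$, simply delete them — this changes neither the edge densities nor the triangle density, and it does not destroy the other hypotheses, since the vertices $a_0,a_1,b_0,b_1$ must have positive weight for the strict inequality in (iv) to be achievable (and if any of them had weight $0$ we could still argue, but it is cleanest to note that a zero-weight vertex contributes nothing and can be removed, and that removing zero-weight vertices from $C$ only removes elements common to or absent from both $C_{a_0b_0}$ and $C_{a_1b_1}$ in a way that preserves the proper-subset relation as long as some $c\in C_{a_1b_1}\setminus C_{a_0b_0}$ has positive weight; if no such $c$ has positive weight then in fact $\sum_{c\in C_{a_1b_1}}w(c)=\sum_{c\in C_{a_0b_0}}w(c)$ and we instead directly delete the offending edge $a_1b_1$ or a zero-weight vertex to reduce $|V(G)|$ without changing any density, contradicting vertex minimality). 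Finally, condition (iv): since $C_{a_0b_0}\subsetneq C_{a_1b_1}$, we have $\sum_{c\in C_{a_0b_0}}w(c)\le\sum_{c\in C_{a_1b_1}}w(c)$, and after the zero-weight cleanup the inequality is strict.

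With conditions (i)--(iv) verified, Lemma \ref{Order_Sset} yields $(G',w,p')\in\DTri$ with the same edge densities but strictly smaller triangle density. Then Lemma \ref{DoublyWeighted_SameAs_Weighted} converts this into $(G'',w'')\in\Tri$ with the same (strictly smaller) triangle density and the same edge densities. Hence $(G'',w'')\in\Tri(\abg)$ with $t(G'',w'')<t(G,w)=T_{\min}(\abg)$, which is absurd. This contradiction shows no such configuration of vertices can exist in an extremal vertex minimal graph.

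The main obstacle I anticipate is the bookkeeping around zero-weight vertices: Lemma \ref{Order_Sset} genuinely requires $w(v)>0$ for \emph{all} $v$, so I must argue carefully that deleting zero-weight vertices is harmless and that it can be done while keeping $C_{a_0b_0}$ a proper subset of $C_{a_1b_1}$ with a witness of positive weight — and that in the degenerate case where the only witnesses have weight $0$, the graph was not vertex minimal to begin with (delete a zero-weight vertex, or delete the edge $a_1b_1$ which lies in no triangle of positive weight once those witnesses vanish). Everything else is a direct chaining of Lemmas \ref{Order_Sset} and \ref{DoublyWeighted_SameAs_Weighted}.
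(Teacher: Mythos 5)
Your proposal is correct and follows essentially the same route as the paper: pass to a doubly-weighted graph with $p\equiv 1$, verify conditions (i)--(iv) of Lemma \ref{Order_Sset} (with the witness $u\in C_{a_1b_1}\setminus C_{a_0b_0}$ of positive weight giving the strict inequality in (iv)), and convert back via Lemma \ref{DoublyWeighted_SameAs_Weighted}. The only difference is that your case analysis around zero-weight vertices is unnecessary: vertex minimality already forces $w(v)>0$ for every vertex (a zero-weight vertex could be deleted without changing any density), which is exactly how the paper dispatches condition (i) and the strictness of (iv) in one line.
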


\begin{proof}[Proof of Corollary \ref{Order_subset}]
We will prove that if $(G,w)$ is vertex minimal then it is not
extremal by applying Lemma \ref{Order_Sset}.

Let $(G,w)$ be vertex minimal, so $w(v)> 0$ for all $v\in V(G)$. We
can add the function $p$ which maps all edges of $G$ to $1$ to
create $(G,w,p)\in\DTri$. Now $(G,w,p)$ has the same triangle and
edge densities as $(G,w)$. By Lemma
\ref{DoublyWeighted_SameAs_Weighted} it is enough to show that there
exists $(G',w',p')\in\DTri$ with the same edge densities as
$(G,w,p)$ but a smaller density of triangles. Note that conditions
$(i)-(iii)$ in the statement of Lemma \ref{Order_Sset} hold for
$(G,w,p)$. Thus Lemma \ref{Order_Sset} will provide such a
$(G',w',p')$ if we can show that
\[
\sum_{c\in C_{a_0b_0}} w(c)<\sum_{c\in C_{a_1b_1}} w(c).
\]
Let $u\in C_{a_1b_1}\setminus C_{a_0b_0}$. Since $(G,w)$ is vertex
minimal $w(u)>0$. Hence
\begin{align*}
\sum_{c\in C_{a_1b_1}} w(c) - \sum_{c\in C_{a_0b_0}} w(c) =
\sum_{c\in C_{a_1b_1}\setminus C_{a_0b_0}} w(c) \geq w(u)>0.
\end{align*}
In which case all the conditions of Lemma \ref{Order_Sset} are
satisfied, and $(G,w)$ is not extremal.
\end{proof}

\begin{proof}[Proof of Lemma \ref{Order_Sset}]
If $a_0b_0\notin E(G)$ let $G'$ be the graph produced from $G$ by
adding the edge $a_0b_0$. If $a_0b_0\in E(G)$ then let $G'=G$.
Define $p':E(G')\to (0,1]$ by $p'(e)=p(e)$ for $e\in E(G')\setminus
\{a_0b_0, a_1b_1\}$ and
\begin{align*}
p'(a_0b_0) &=
\begin{cases}
\dfrac{\delta}{w(a_0)w(b_0)}, &\text{if $a_0b_0\notin E(G)$,}\\
p(a_0b_0)+\dfrac{\delta}{w(a_0)w(b_0)}, &\text{if $a_0b_0\in E(G)$,}
\end{cases}\\
p'(a_1b_1) &= p(a_1b_1)-\frac{\delta}{w(a_1)w(b_1)},
\end{align*}
where $\delta>0$ is chosen sufficiently small so that
$p'(a_0b_0),p'(a_1b_1)\in(0,1)$.

The weights and edges have not changed between classes $A,C$ and
$B,C$. Consequently the corresponding edge densities will be the
same in $(G,w,p)$ and $(G',w,p')$. However, the edge density between
class $A$ and $B$, and the triangle densities may have changed. The
difference in the $A,B$ edge density between $(G',w,p')$ and
$(G,w,p)$ is
\begin{align*}
w(a_0)w(b_0)\frac{\delta}{w(a_0)w(b_0)}
-w(a_1)w(b_1)\frac{\delta}{w(a_1)w(b_1)}=0.
\end{align*} The change in triangle density is
\begin{align*}
\sum_{a_0b_0c\in T(G')}w(a_0)w(b_0)w(c)\frac{\delta}{w(a_0)w(b_0)}
-\sum_{a_1b_1c\in T(G')}w(a_1)w(b_1)w(c)\frac{\delta}{w(a_1)w(b_1)}
\end{align*}
which simplifies to
\begin{align*}
\delta\left(\sum_{a_0b_0c\in T(G')}w(c)-\sum_{a_1b_1c\in
T(G')}w(c)\right)<0.
\end{align*}
Where the final inequality follows from condition $(iv)$.

Hence the density of triangles in $(G',w,p')$ is smaller than that
in $(G,w,p)$, but the edge densities are the same in both.
\end{proof}

\begin{figure}[tbp]
\begin{center}
\includegraphics[height=4cm]{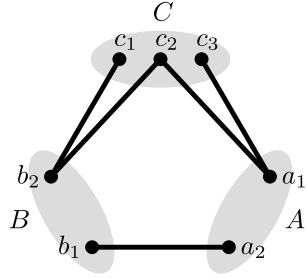}
\caption{The tripartite complement of the graph
$H_7$.}\label{Fig_H7}
\end{center}
\end{figure}

\begin{lemma}\label{7Vertex_MinDensity}
Consider the graph $H_7$ whose tripartite complement is given in
Figure \ref{Fig_H7}. If $(\abg)\in R_2$ then there exists a
weighting $w$ such that $(H_7,w)\in\Tri(\abg)$ and
$t(H_7,w)=2\sqrt{\alpha\beta(1-\gamma)}+2\gamma-2$. Furthermore
$t(H_7,w)\leq t(H_7,w')$ for all weightings $w'$, such that
$(H_7,w')\in\Tri(\abg)$.
\end{lemma}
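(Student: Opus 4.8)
The plan is to parametrise all weightings of $H_7$ explicitly, read off the edge densities and the triangle density as polynomials in the vertex weights, and minimise the triangle density by the AM--GM inequality; the feasibility of the minimiser is what will single out the region $R_2$. Concretely: since the claimed value $2\sqrt{\alpha\beta(1-\gamma)}+2\gamma-2$ is symmetric in $\alpha$ and $\beta$ and $H_7$ has seven vertices, the class on which $\gamma$ does not depend (namely $C$) has size $3$ and $|A|=|B|=2$; label the vertices of $H_7$ as in Figure \ref{Fig_H7}. Writing the vertex weights with hats as in the proof of Lemma \ref{6Vertex_IsBest} and using the constraint that the weights in each class sum to $1$ to eliminate one weight per class, the quantities $\alpha(H_7,w)$, $\beta(H_7,w)$, $\gamma(H_7,w)$ and $t(H_7,w)$ become explicit polynomials in the four remaining weights, obtained from the non-edges of $H_7$ exactly as the corresponding formulas for $H_6$ were obtained.

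For the lower bound the aim is to exhibit, as polynomial identities in the vertex weights, a decomposition $t(H_7,w)+2-2\gamma(H_7,w)=P+Q$ with $P,Q\ge 0$ (each a product of vertex weights, or a sum of such products) for which the edge-density relations force $PQ=\alpha(H_7,w)\,\beta(H_7,w)\,(1-\gamma(H_7,w))$. AM--GM then gives $P+Q\ge 2\sqrt{PQ}=2\sqrt{\alpha\beta(1-\gamma)}$, hence $t(H_7,w')\ge 2\sqrt{\alpha\beta(1-\gamma)}+2\gamma-2$ for every weighting $w'$ with $(H_7,w')\in\Tri(\abg)$. If no such clean identity presents itself, one instead uses the three density equations to solve for some of the weights in terms of $\abg$, substitutes into $t$, and minimises over the one residual parameter; the stationary point is again pinned down by AM--GM, with the boundary weightings (where a hat weight is $0$ or $1$) treated separately, using Lemma \ref{abgInR3_implies_Not01}(ii) to recall $0<\abg<1$ on $R_2$ and Lemma \ref{a+b+g-2_LowerBound} if needed.

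To attain the bound, force equality in AM--GM by demanding $P=Q=\sqrt{\alpha\beta(1-\gamma)}$; together with the three edge-density equations this determines the remaining hat weights, and hence a candidate weighting $w$ of $H_7$, as explicit functions of $\abg$ involving the square root $\sqrt{\alpha\beta(1-\gamma)}$. It then remains to check that this candidate is a genuine weighting, i.e. that every hat weight lies in $[0,1]$ and the class sums equal $1$. After clearing denominators these inequalities reduce to the two conditions $0<\abg<1$ and $(\alpha+\beta-\gamma)^2<4\alpha\beta(1-\gamma)$. The identity $\Delta(\abg)=(\alpha+\beta-\gamma)^2-4\alpha\beta(1-\gamma)$ shows the second is exactly $\Delta(\abg)<0$, which holds throughout $R_2=R\setminus R_1$ by definition, while the first holds on $R_2$ by Lemma \ref{abgInR3_implies_Not01}(ii). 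Thus the candidate is a valid weighting with $(H_7,w)\in\Tri(\abg)$ and $t(H_7,w)=2\sqrt{\alpha\beta(1-\gamma)}+2\gamma-2$, which together with the lower bound proves the lemma.

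I expect the main obstacle to be twofold. The conceptual step is spotting the decomposition $t+2-2\gamma=P+Q$ with $PQ=\alpha\beta(1-\gamma)$; once it is in hand the lower bound is immediate, so the remaining work there is just verifying that algebraic identity from the edge set of $H_7$. The more delicate part is the feasibility check in the achievability step: showing that the AM--GM-optimal weights are simultaneously legitimate precisely on $R_2$, which requires choosing the correct square roots throughout and, if the elimination route is used, disposing carefully of the degenerate weightings.
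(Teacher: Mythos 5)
Your proposal follows essentially the same route as the paper: the paper eliminates all weights in favour of $\wa_1$ via the density equations, obtains $t(H_7,w)=2\gamma-2+\frac{\beta(1-\gamma)}{1-\wa_1}+\alpha(1-\wa_1)$ (exactly your decomposition $t+2-2\gamma=P+Q$ with $PQ=\alpha\beta(1-\gamma)$), applies AM--GM, and verifies feasibility of the optimal weights, the critical inequality $\wc_2>0$ reducing to $\Delta(\abg)<0$ just as you predict. The only minor imprecision is that the feasibility of the other weights uses the defining inequalities of $R$ (e.g.\ $\beta\gamma+\alpha>1$), not merely $0<\abg<1$, but since $R_2\subseteq R$ this changes nothing.
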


\begin{proof}
If $(\abg)\in R_2$ then, by Lemma \ref{abgInR3_implies_Not01} (ii),
we know that $0<\abg<1$. Consider a general weighting of $H_7$, with
vertices labelled as in Figure \ref{Fig_H7}. If such a weighting of
$H_7$ has edge densities $\abg$ then $\abg<1$ implies that
$\wa_1\neq 0,1,\gamma$. Now given $\abg$ and $\wa_1\neq 0,1,\gamma$
we have enough information to deduce the rest of the weights of the
vertices. (Note that this may not be an actual weighting since some
of these values may lie outside of $[0,1]$.)
\begin{align*}
\wa_2 &= 1-\wa_1, &\wb_1 &= \frac{1-\gamma}{1-\wa_1}, &\wb_2 &=\frac{\gamma-\wa_1}{1-\wa_1},\\
\wc_1 &= 1-\frac{1-\beta}{\wa_1}, &\wc_3 &= 1-\frac{(1-\alpha)(1-\wa_1)}{\gamma-\wa_1},\\
\wc_2 &=
\frac{1-\beta}{\wa_1}+\frac{(1-\alpha)(1-\wa_1)}{\gamma-\wa_1}-1,
\end{align*}
which have been deduced from
\begin{align*}
 1 &= \wa_1+\wa_2, &1-\gamma &= \wa_2\wb_1, &1 &= \wb_1+\wb_2,\\
1-\beta &= (1-\wc_1)\wa_1, &1-\alpha &= (1-\wc_3)\wb_2,  &1
&=\wc_1+\wc_2+\wc_3,
\end{align*}
respectively. There are two triangles in $H_7$, with weights
$\wa_1\wb_1\wc_1$ and $\wa_2\wb_2\wc_3$, hence the triangle density
is
\begin{align*}
\wa_1\left(\frac{1-\gamma}{1-\wa_1}\right)\left(1-\frac{1-\beta}{\wa_1}\right)
+(1-\wa_1)\left(\frac{\gamma-\wa_1}{1-\wa_1}\right)
\left(1-\frac{(1-\alpha)(1-\wa_1)}{\gamma-\wa_1}\right)
\end{align*}
which simplifies to
\begin{align*}
2\gamma-2+\frac{\beta(1-\gamma)}{1-\wa_1}+\alpha(1-\wa_1).
\end{align*}
This expression is minimized when
$1-\wa_1=\sqrt{\beta(1-\gamma)/\alpha}$, and consequently we obtain
the desired triangle density of
$2\sqrt{\alpha\beta(1-\gamma)}+2\gamma-2$. We now must show that the
vertex weights implied by $\wa_1=1-\sqrt{\beta(1-\gamma)/\alpha}$
all lie in $[0,1]$ and that $\wa_1\neq\gamma,0,1$. Since the sum of
the weights in each class equals one, in order to show that all
weights lie in $[0,1]$ it is sufficient to show that they are all
non-negative.

If $\wa_1=\gamma$ then $1-\gamma = \sqrt{\beta(1-\gamma)/\alpha}$,
which rearranges to $\alpha\gamma+\beta-\alpha=0$. However,
$\alpha\gamma+\beta-\alpha>\alpha\gamma+\beta-1>0$ (as $(\abg)\in
R_2\subseteq R$), hence $\wa_1\neq \gamma$. $1-\wa_1$ is clearly
positive, proving that $0<\wa_2$ and $\wa_1\neq 1$. Showing
$0<\wa_1$ is equivalent to proving $\sqrt{\beta(1-\gamma)/\alpha}<1$
which is true if $0<\beta\gamma+\alpha-\beta$, and this holds
because $\beta\gamma+\alpha-\beta>\beta\gamma+\alpha-1>0$. Since
$\wb_2$ equals $1-\sqrt{\alpha(1-\gamma)/\beta}$, a similar argument
shows that $\wb_1,\wb_2>0$. It is also straightforward to show that
$\wc_1,\wc_3 >0$, but showing $\wc_2>0$ requires more work.

Using $\wc_1+\wc_2+\wc_3=1$, $\wc_1 = 1-(1-\beta)/\wa_1$, and
$\wc_3= 1-(1-\alpha)/\wb_2$ we obtain
\begin{align*}
\wc_2&= -1+\frac{(1-\beta)\wb_2 + (1-\alpha)\wa_1}{\wa_1\wb_2}.
\end{align*}
Hence $\wc_2>0$ if and only if
\[
\wa_1\wb_2<(1-\beta)\wb_2 + (1-\alpha)\wa_1.
\]
Substituting $\wa_1 =1-\sqrt{\beta(1-\gamma)/\alpha}$ and $\wb_2 =
1-\sqrt{\alpha(1-\gamma)/\beta}$
yields
\begin{align*}
\alpha+\beta-\gamma<2\sqrt{\alpha\beta(1-\gamma)}.
\end{align*}
Now $\alpha+\beta-\gamma>\alpha\gamma+\beta-1>0$, hence $0<\wc_2$ if
and only if $(\alpha+\beta-\gamma)^2<4\alpha\beta(1-\gamma)$.
Collecting all the terms onto the left hand side shows that we
require $\Delta(\abg)<0$, which we have from the fact that
$(\abg)\in R_2$.
\end{proof}

\begin{lemma}\label{CyclicIneq_NotExtremal}
For any $(\abg)\in R_2$,
\begin{align*}
T_{\min}(\abg)<\min\{\alpha\beta+\gamma-1, \alpha\gamma+\beta-1,
\beta\gamma+\alpha-1\}.
\end{align*}
\end{lemma}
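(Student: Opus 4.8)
The plan is to exhibit, for each $(\abg)\in R_2$, an explicit weighted tripartite graph whose triangle density is strictly below all three of the quantities $\alpha\beta+\gamma-1$, $\alpha\gamma+\beta-1$, $\beta\gamma+\alpha-1$. The natural candidate is the weighting $w$ of $H_7$ furnished by Lemma \ref{7Vertex_MinDensity}, which gives $t(H_7,w)=2\sqrt{\alpha\beta(1-\gamma)}+2\gamma-2$; since $T_{\min}(\abg)\le t(H_7,w)$, it then suffices to prove the three inequalities
\[
2\sqrt{\alpha\beta(1-\gamma)}+2\gamma-2<\min\{\alpha\beta+\gamma-1,\ \alpha\gamma+\beta-1,\ \beta\gamma+\alpha-1\}.
\]
By Lemma \ref{abgInR3_implies_Not01}(ii) we have $0<\abg<1$ throughout $R_2$, and by definition of $R_2$ we have $\Delta(\abg)<0$, where $\Delta(\abg)=\alpha^2+\beta^2+\gamma^2-2\alpha\beta-2\alpha\gamma-2\beta\gamma+4\alpha\beta\gamma$; these are the facts I expect to drive every estimate.

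First I would dispose of the comparison against $\alpha\beta+\gamma-1$. Here the inequality $2\sqrt{\alpha\beta(1-\gamma)}+2\gamma-2<\alpha\beta+\gamma-1$ rearranges to $2\sqrt{\alpha\beta(1-\gamma)}<\alpha\beta-\gamma+1=\alpha\beta+(1-\gamma)$; since the right-hand side is positive (as $\gamma<1$) and the right-hand side minus the left-hand side is $(\sqrt{\alpha\beta}-\sqrt{1-\gamma})^2\ge 0$, I need the inequality to be strict, i.e. $\alpha\beta\ne 1-\gamma$. If $\alpha\beta=1-\gamma$ then $\alpha\beta+\gamma=1$, contradicting $(\abg)\in R_2\subseteq R$ (which requires $\alpha\beta+\gamma>1$); so strictness holds and this case is done.

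The harder cases are the comparisons against $\alpha\gamma+\beta-1$ and $\beta\gamma+\alpha-1$, which are symmetric in $\alpha\leftrightarrow\beta$, so I would treat $\alpha\gamma+\beta-1$ and let symmetry handle the other. The inequality to prove is $2\sqrt{\alpha\beta(1-\gamma)}+2\gamma-2<\alpha\gamma+\beta-1$, i.e. $2\sqrt{\alpha\beta(1-\gamma)}<\alpha\gamma+\beta+1-2\gamma=(\beta-\gamma)+(\alpha\gamma-\gamma)+1=(\beta-\gamma)+1-\gamma(1-\alpha)$. I would first argue the right-hand side is positive: from $(\abg)\in R$ we have $\alpha\gamma+\beta>1$, so $\alpha\gamma+\beta+1-2\gamma>2-2\gamma=2(1-\gamma)>0$. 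Then squaring, the desired inequality becomes $4\alpha\beta(1-\gamma)<(\alpha\gamma+\beta+1-2\gamma)^2$, a polynomial inequality in $\abg$. The main obstacle will be to show that, after expansion, this polynomial inequality follows from $\Delta(\abg)<0$ together with $0<\abg<1$ and the defining inequalities of $R$; I anticipate the difference $(\alpha\gamma+\beta+1-2\gamma)^2-4\alpha\beta(1-\gamma)$ can be written as a positive combination of $-\Delta(\abg)$, a term like $(1-\gamma)$ times one of the $R$-slack quantities ($\alpha\gamma+\beta-1$), and an evident square or product of factors from $(0,1)$, but pinning down the exact algebraic identity is the step that will require genuine care rather than routine symbol-pushing. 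Once that identity is found, the proof of Lemma \ref{CyclicIneq_NotExtremal} is immediate.
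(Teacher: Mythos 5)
Your first comparison is exactly the paper's computation, but the overall plan has a genuine gap: the two ``harder cases'' you defer are not merely hard, they are false. The single quantity $2\sqrt{\alpha\beta(1-\gamma)}+2\gamma-2$ need not lie below $\alpha\gamma+\beta-1$ (or $\beta\gamma+\alpha-1$) on $R_2$. Concretely, take $(\alpha,\beta,\gamma)=(0.8,\,0.55,\,0.7)$: then $\alpha\beta+\gamma=1.14$, $\alpha\gamma+\beta=1.11$, $\beta\gamma+\alpha=1.185$, so the point lies in $R$, and $\Delta(\abg)=1.4325-2.77+1.232=-0.1055<0$, so it lies in $R_2$; yet $2\sqrt{\alpha\beta(1-\gamma)}+2\gamma-2=2\sqrt{0.132}-0.6\approx 0.1266$, while $\alpha\gamma+\beta-1=0.11$. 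So no algebraic identity of the kind you are hoping for can exist, and the search for one would not terminate: the polynomial inequality you reduce to simply does not follow from $\Delta<0$ together with the $R$-conditions.

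The repair is to match each of the three quantities with the correspondingly permuted $H_7$ bound rather than using one bound against all three. Since $R_2$, $T_{\min}$ and the minimum are symmetric under permuting $(\abg)$ (relabel the vertex classes), Lemma \ref{7Vertex_MinDensity} also gives $T_{\min}(\abg)\leq 2\sqrt{\alpha\gamma(1-\beta)}+2\beta-2$ and $T_{\min}(\abg)\leq 2\sqrt{\beta\gamma(1-\alpha)}+2\alpha-2$, and each matched comparison reduces, exactly as in your first case, to a perfect square: e.g.\ $\alpha\gamma+\beta-1-\bigl(2\sqrt{\alpha\gamma(1-\beta)}+2\beta-2\bigr)=\bigl(\sqrt{\alpha\gamma}-\sqrt{1-\beta}\bigr)^2$, which is strictly positive because $\alpha\gamma+\beta>1$. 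The paper achieves the same thing more economically by assuming without loss of generality that $\alpha\beta+\gamma-1$ is the smallest of the three quantities, after which only your (correct) first comparison is needed; note that your counterexample point has $\beta<\gamma<\alpha$, i.e.\ precisely a point where $\alpha\beta+\gamma-1$ is \emph{not} the minimum, which is why the mismatched comparison breaks there.
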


\begin{proof}[Proof of Lemma \ref{CyclicIneq_NotExtremal}]
Without loss of generality let us assume that
\[
\alpha\beta+\gamma-1 = \min\{\alpha\beta+\gamma-1,
\alpha\gamma+\beta-1, \beta\gamma+\alpha-1\}.\] By Lemma
\ref{7Vertex_MinDensity} we know that for any $(\abg)\in R_2$ there
exists a weighting $w$ such that $(H_7,w)\in\Tri(\abg)$ and
$t(H_7,w) = 2\sqrt{\alpha\beta(1-\gamma)}+2\gamma-2$. Hence
$T_{\min}(\abg)\leq 2\sqrt{\alpha\beta(1-\gamma)}+2\gamma-2$. If
$\alpha\beta+\gamma-1\leq 2\sqrt{\alpha\beta(1-\gamma)}+2\gamma-2$
then
\[
\alpha\beta+1-\gamma \leq 2\sqrt{\alpha\beta(1-\gamma)}.
\]
Squaring and rearranging yields
\[
(\alpha\beta+\gamma-1)^2\leq 0.
\]
Since $(\abg)\in R_2\subseteq R$ we know $\alpha\beta+\gamma-1>0$
holds true, hence we have a contradiction.
\end{proof}

\begin{lemma}\label{ClassSize1_NotExtremal}
Let $(\abg)\in R_2$. If $(G,w)\in\Tri(\abg)$ is extremal, then $|A|,
|B|, |C|\geq 2$.
\end{lemma}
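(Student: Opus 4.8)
The plan is to argue by contradiction: suppose $(G,w)\in\Tri(\abg)$ is extremal with, say, $|C|=1$ (the cases $|A|=1$ or $|B|=1$ being symmetric after relabelling the densities). I would first reduce to the case where $w(v)>0$ for every vertex, since deleting a zero-weight vertex does not change any density and preserves extremality; so we may assume $C=\{c\}$ with $w(c)=1$. With only one vertex in $C$, a triangle $abc$ exists precisely when $ac,bc\in E(G)$ and $ab\in E(G)$. Write $A'=\Gamma_A(c)$ and $B'=\Gamma_B(c)$ for the neighbourhoods of $c$; then $\beta = \sum_{a\in A'}w(a)$ and $\alpha=\sum_{b\in B'}w(b)$, while $\gamma=\sum_{ab\in E(A,B)}w(ab)$ and the triangle density is $t(G,w)=\sum_{ab\in E(A',B')}w(ab)$.

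The key step is to show $t(G,w)\ge \gamma-(1-\alpha)-(1-\beta)=\alpha+\beta+\gamma-2$, but more usefully to get the stronger bound matching one of the cyclic quantities. Observe that the edges of $E(A,B)$ counted by $\gamma$ that are \emph{not} triangle edges must have an endpoint outside $A'$ or outside $B'$; the total weight of all edges incident to $A\setminus A'$ is at most $w(A\setminus A')=1-\beta$, and similarly for $B\setminus B'$, so
\[
t(G,w)\ \geq\ \gamma-(1-\beta)-(1-\alpha)\ =\ \alpha+\beta+\gamma-2.
\]
I would then sharpen this: in fact every edge from $A\setminus A'$ to $B$ contributes, together with the edges among $A'$ and $B\setminus B'$, and by pushing the bookkeeping one can show $t(G,w)\ge \alpha\beta+\gamma-1$ as well — the point being that the non-triangle edges lie in $E(A\setminus A', B)\cup E(A', B\setminus B')$ and the weight of \emph{all} pairs in $(A\times B)\setminus(A'\times B')$ is exactly $1-\alpha\beta$, so $\gamma - t(G,w)\le 1-\alpha\beta$. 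Hence any extremal $(G,w)$ with $|C|=1$ satisfies $t(G,w)\ge\alpha\beta+\gamma-1$, and by symmetry with $|A|=1$ or $|B|=1$ we get $t(G,w)\ge\min\{\alpha\beta+\gamma-1,\alpha\gamma+\beta-1,\beta\gamma+\alpha-1\}$. But Lemma \ref{CyclicIneq_NotExtremal} says $T_{\min}(\abg)$ is strictly less than this minimum for $(\abg)\in R_2$, so such a graph cannot be extremal — a contradiction.

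The main obstacle I anticipate is making the bound $\gamma - t(G,w)\le 1-\alpha\beta$ fully rigorous: one has to be careful that the weight of pairs $(a,b)$ with $a\notin A'$ or $b\notin B'$ is exactly $w(A)w(B)-w(A')w(B')=1-\alpha\beta$ (using $w(A)=w(B)=1$), and that the non-triangle edges of $G$ between $A$ and $B$ all lie among these pairs — which is immediate from $C=\{c\}$, since $ab\in E(G)$ fails to be a triangle edge exactly when $a\notin\Gamma_A(c)$ or $b\notin\Gamma_B(c)$. Once that identity is in hand the rest is a short computation, so the lemma follows cleanly from Lemma \ref{CyclicIneq_NotExtremal}.
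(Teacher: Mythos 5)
Your argument is correct, and it takes a genuinely different and more elementary route than the paper. The paper's proof first passes to a doubly-weighted graph, uses the $\Merge$ algorithm to collapse $A$ to two vertices $a_0,a_1$ (according to whether a vertex is adjacent to $c$) and likewise for $B$, invokes Theorem \ref{Thm_T=0} to force $a_1b_1\in E$, and then uses Lemma \ref{Order_Sset} to argue that unless the graph is already non-extremal all four $A$--$B$ pairs carry full edges, whence the triangle density is exactly $\alpha\beta+\gamma-1$; Lemma \ref{CyclicIneq_NotExtremal} then gives the contradiction. You instead prove the unconditional inequality $t(G,w)\geq\alpha\beta+\gamma-1$ whenever $|C|=1$, by noting that with $A'=\Gamma_A(c)$, $B'=\Gamma_B(c)$ one has $w(A')=\beta$, $w(B')=\alpha$, every non-triangle edge of $E(A,B)$ lies in $(A\times B)\setminus(A'\times B')$, and the total pair-weight of that set is $1-\alpha\beta$, so $\gamma-t(G,w)\leq 1-\alpha\beta$. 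This is rigorous as stated (the reduction to positive weights is not even needed for it), it is shorter, and it avoids the $\Merge$/$\Split$/Lemma \ref{Order_Sset} machinery entirely; what the paper's longer route buys is a description of what a putative extremal configuration with $|C|=1$ would have to look like, which is not needed for the lemma itself. Both proofs finish identically by appealing to Lemma \ref{CyclicIneq_NotExtremal}, and your symmetry remark for $|A|=1$ or $|B|=1$ is fine since that lemma bounds $T_{\min}$ below all three cyclic expressions.
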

To prove Lemma \ref{ClassSize1_NotExtremal} we will require the
following algorithm.
\begin{figure}[tbp]
\begin{center}
\includegraphics[height=4cm]{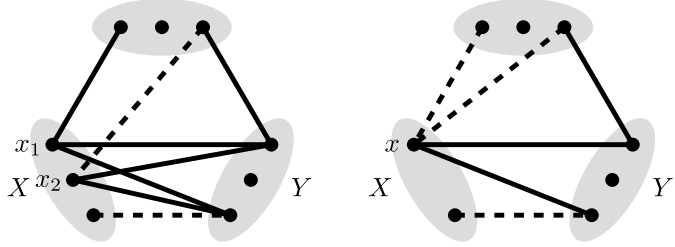}
\caption{An example of $(G,w,p)$ and $\Merge(G,w,p,x_1,x_2)$.
Partial edges are represented by dotted lines, the solid lines are
edges which $p$ maps to $1$}\label{Fig_Merge}
\end{center}
\end{figure}
\begin{algorithm}[$\Merge$]
The algorithm $\Merge$ takes as input $(G,w,p)\in\DTri$, and two
distinct vertices $x_1,x_2\in X$, where $X$ is one of the vertex
classes of $G$. The vertices $x_1, x_2$, must satisfy, for some
vertex class $Y\neq X$, $\Gamma_Y(x_1)=\Gamma_Y(x_2)$,
$w(x_1)+w(x_2)>0$ and $p(x_1y)=p(x_2y)=1$ for all
$y\in\Gamma_Y(x_1)$. The output of the algorithm is represented by
$\Merge(G,w,p,x_1,x_2)$ and is a doubly-weighted tripartite graph in
which $x_1, x_2$ have been replaced by a single new vertex: $x$. For
convenience let us write $(G',w',p')=\Merge(G,w,p,x_1,x_2)$. Now
$G', w', p'$ are formed as follows:
\begin{itemize}
\item Construct $G'$ from $G$ by replacing the vertices $x_1,x_2$ by
a new vertex $x$ in $X$. Add edges from $x$ so that
$\Gamma^{G'}(x)=\Gamma^G(x_1)\cup\Gamma^G(x_2)$.
\item Set $w'(x)=w(x_1)+w(x_2)$. Let $w'(v)=w(v)$ for all $v\in V(G')\setminus\{x\}$.
\item For $u,v\in V(G')\setminus\{x\}$ and $uv\in E(G')$, let $p'(uv)=p(uv)$.
For $xv\in E(G')$ set
\end{itemize}
\begin{align*}
p'(xv) =
\begin{cases}
w(x_1)p(x_1v)/w'(x), &\text{if $x_1v\in E(G)$, $x_2v\notin E(G)$},\\
w(x_2)p(x_2v)/w'(x), &\text{if $x_1v\notin E(G)$, $x_2v\in E(G)$},\\
(w(x_1)p(x_1v)+w(x_2)p(x_2v))/w'(x), &\text{if $x_1v\in E(G)$,
$x_2v\in E(G)$}.
\end{cases}
\end{align*}
Observe that for $y\in Y$ we have $xy\in E(G')$ if and only if
$x_1y,x_2y\in E(G)$ and in this case $p(xy)=1$. It is easy to check
that the edge and triangle densities of $(G,w,p)$ and $(G',w',p')$
are the same.
\end{algorithm}
\begin{figure}[tbp]
\begin{center}
\includegraphics[height=4cm]{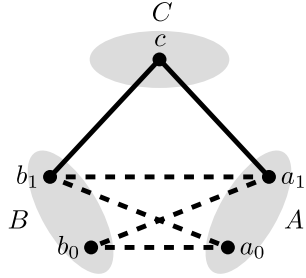}
\caption{A graph with $|C|=1$ after merging vertices in $A$ and $B$.
The dotted lines represent edges that may or may not be in the
graph.}\label{Fig_SingleVertexExample}
\end{center}
\end{figure}
\begin{proof}[Proof of Lemma \ref{ClassSize1_NotExtremal}]
Suppose $(G,w)$ is extremal and without loss of generality vertex
class $C=\{c\}$ contains exactly one vertex. We can assume $w(v)\neq 0$
for all $v\in V(G)$, as any vertices with weight zero can be removed
without affecting any of the densities. Create a doubly-weighted
tripartite graph $(G,w,p)$ with the same densities as $(G,w)$ by
setting $p(e)=1$ for all $e\in E(G)$. We will show that the triangle
density of $(G,w,p)$ is at least $\alpha\beta+\gamma-1$ and
consequently, by Lemma \ref{CyclicIneq_NotExtremal}, $(G,w)$ is not
extremal.

Since $(\abg)\in R_2$, by Lemma \ref{abgInR3_implies_Not01} (ii) we
have $\beta\neq 0,1$. Moreover since $C=\{c\}$ we know that there must exist a vertex in $A$
whose neighbourhood in $C$ is empty and another whose neighbourhood in $C$ is $\{c\}$. We can replace
all vertices $a\in A$ satisfying $\Gamma_C(a)=\emptyset$ by a single
vertex $a_0$ via repeated applications of the $\Merge$ algorithm on pairs
of such vertices. Similarly we can replace all vertices with
$\Gamma_C(a)=\{c\}$ by a single vertex $a_1$. Having done this we obtain a doubly-weighted graph in which $A=\{a_0,a_1\}$, $a_1c$ is
an edge, and $a_0c$ is a non-edge.  Note the edges and
weights between $B$ and $C$ remain unchanged but we may have
modified the edges and weights between $A$ and $B$.

By a similar argument we can reduce $B$ to two vertices $b_0,b_1$, with
$b_1c$ an edge and $b_0c$ a non-edge. Let us call this doubly
weighted graph $(G',w',p')$, and note it has the same densities as
$(G,w,p)$ and hence $(G,w)$. By construction we have
\[
a_0c,b_0c\notin E(G'),\quad a_1c,b_1c\in E(G'),\quad
p'(a_1c)=p'(b_1c)=1,
\]
see Figure \ref{Fig_SingleVertexExample}.

We now have enough information to determine the weights of all of
the vertices:
\begin{align*}
w'(c) = 1,\quad w'(a_1) = \beta,\quad w'(a_0) = 1-\beta,\quad
w'(b_1) = \alpha,\quad w'(b_0)=1-\alpha.
\end{align*}
The only information we are lacking about $(G',w',p')$ is which
edges are present in $E(A,B)$ and what their weights are. However,
since $(\abg)\in R$, Theorem \ref{Thm_T=0} implies that $G'$
contains a triangle. Hence $a_1b_1\in E(A,B)$. Since
$C_{a_1b_1}=\{c\}$ and $C_{a_0b_0}=C_{a_0b_1}=C_{a_1b_0}=\emptyset$,
Lemma \ref{Order_Sset} tells us that $(G,w)$ will not be extremal
unless $a_0b_0,a_0b_1,a_1b_0$ are all edges which $p'$ maps to $1$.

Now, $a_1b_1c$ is the only triangle in the doubly-weighted
tripartite graph, hence the triangle density is
$w'(a_1)w'(b_1)p'(a_1b_1)=\lambda(a_1b_1)$ (as $w'(c), p'(a_1c),
p'(b_1c)$ are all $1$). By the definition of edge density in a
doubly-weighted tripartite graph, we have
\begin{align*}
\gamma &=
\lambda(a_0b_0)+\lambda(a_0b_1)+\lambda(a_1b_0)+\lambda(a_1b_1)\\
&=(1-\alpha)(1-\beta)+\alpha(1-\beta)+(1-\alpha)\beta+t(G',w',p')\\
&=1-\alpha\beta+t(G',w',p')
\end{align*}
Hence the triangle density is $\alpha\beta+\gamma-1$, which by Lemma
\ref{CyclicIneq_NotExtremal} and Lemma
\ref{DoublyWeighted_SameAs_Weighted} implies that $(G,w)$ is not
extremal.
\end{proof}

\begin{lemma}\label{CollapseClass}
If $(\abg)\in R_2$,  $(G,w)\in\Tri(\abg)$ and for all $a_1,a_2\in
A$, $\Gamma_C(a_1)=\Gamma_C(a_2)$ then $(G,w)$ is not extremal.
\end{lemma}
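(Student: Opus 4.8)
The plan is a short direct computation: we show that the hypothesis forces $t(G,w)\geq\alpha+\beta\gamma-1$, and then invoke Lemma~\ref{CyclicIneq_NotExtremal} to see that this strictly exceeds $T_{\min}(\abg)$.

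First I would fix notation. Let $S=\Gamma_C(a)$ denote the common neighbourhood in $C$ of all the vertices $a\in A$. Since every vertex of $A$ is joined to exactly the vertices of $S$, the $w$-weighted degree into $A$ of a vertex $c\in C$ equals $1$ when $c\in S$ and $0$ when $c\notin S$; summing over $C$ gives $\beta=\sum_{c\in S}w(c)$, and hence $\sum_{c\in C\setminus S}w(c)=1-\beta$. Also, a triple $abc$ is a triangle of $G$ precisely when $c\in S$, $ab\in E(G)$ and $bc\in E(G)$, because $ac\in E(G)$ is equivalent to $c\in S$. For $b\in B$ write $d(b)=\sum_{a\in\Gamma_A(b)}w(a)$ and $g(b)=\sum_{c\in S\cap\Gamma_C(b)}w(c)$; grouping the sum defining $t(G,w)$ by the vertex $c\in S$ and then interchanging the order of summation gives
\[
t(G,w)=\sum_{b\in B}w(b)\,d(b)\,g(b),\qquad\text{while}\qquad \gamma=\sum_{b\in B}w(b)\,d(b).
\]

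The key step uses the bounds $0\leq d(b)\leq1$ and $0\leq g(b)\leq w(S)=\beta$ (the latter since $S\cap\Gamma_C(b)\subseteq S$):
\[
\beta\gamma-t(G,w)=\sum_{b\in B}w(b)\,d(b)\bigl(\beta-g(b)\bigr)\leq\sum_{b\in B}w(b)\bigl(\beta-g(b)\bigr)=\beta-\sum_{b\in B}w(b)\,g(b).
\]
Now $\sum_{b\in B}w(b)g(b)$ is exactly the total weight of the $B$--$C$ edges whose endpoint in $C$ lies in $S$; this equals $\alpha$ minus the total weight of the $B$--$C$ edges whose $C$-endpoint lies in $C\setminus S$, and the latter quantity is at most $\sum_{c\in C\setminus S}w(c)=1-\beta$. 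Hence $\sum_{b\in B}w(b)g(b)\geq\alpha-(1-\beta)=\alpha+\beta-1$, so the display above gives $\beta\gamma-t(G,w)\leq\beta-(\alpha+\beta-1)=1-\alpha$, i.e.\ $t(G,w)\geq\alpha+\beta\gamma-1$.

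Since $(\abg)\in R_2$, Lemma~\ref{CyclicIneq_NotExtremal} gives $\alpha+\beta\gamma-1=\beta\gamma+\alpha-1>T_{\min}(\abg)$, whence $t(G,w)>T_{\min}(\abg)$ and $(G,w)$ is not extremal. I do not anticipate a serious obstacle: everything reduces to rewriting $t(G,w)$, $\gamma$ and $\alpha$ in the form above, and the only point requiring a moment's thought is that the resulting bound $t(G,w)\geq\alpha+\beta\gamma-1$ coincides with one of the three cyclic expressions estimated in Lemma~\ref{CyclicIneq_NotExtremal}, so that neither a case analysis nor the $\Split$/$\Merge$ machinery is needed here.
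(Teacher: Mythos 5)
Your proof is correct, but it takes a genuinely different route from the paper. The paper's proof is structural: it removes zero-weight vertices, passes to a doubly-weighted graph, collapses all of $A$ to a single vertex by repeated applications of $\Merge$, eliminates the resulting partial edges with $\Split$ (splitting in $B$), and then invokes Lemma \ref{ClassSize1_NotExtremal} for graphs with a singleton class — which in turn rests on Theorem \ref{Thm_T=0}, Lemma \ref{Order_Sset} and Lemma \ref{CyclicIneq_NotExtremal}. You instead prove directly that the hypothesis forces $t(G,w)\geq \beta\gamma+\alpha-1$: writing $S$ for the common $C$-neighbourhood, your identities $\beta=\sum_{c\in S}w(c)$, $\gamma=\sum_b w(b)d(b)$, $t=\sum_b w(b)d(b)g(b)$ and the two estimates $d(b)\leq 1$, $\sum_b w(b)g(b)\geq\alpha-(1-\beta)$ are all valid, and then a single appeal to Lemma \ref{CyclicIneq_NotExtremal} finishes the argument. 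What your approach buys is economy and robustness: it needs no $\Merge$/$\Split$ machinery, no removal of zero-weight vertices, and no vertex-minimality considerations, and it even yields the explicit quantitative bound $t(G,w)\geq\beta\gamma+\alpha-1$ under the hypothesis, which is essentially the bound the paper obtains only after the reduction to $|A|=1$. What the paper's route buys is uniformity with the rest of Section \ref{Sec_Properties}, where the same $\Merge$/$\Split$/$\Reduce$ toolkit is reused for several such elimination lemmas; your argument is more elementary but special to this particular configuration. Both proofs ultimately compare the triangle density against the same cyclic expression from Lemma \ref{CyclicIneq_NotExtremal}, so the conclusion is the same.
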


\begin{proof}
If there exist any vertices with weight zero, we can remove them
without affecting the densities. Convert the resulting weighted
tripartite graph into a doubly-weighted tripartite graph and reduce
$A$ down to a single vertex, by repeated applications of $\Merge$ on
the vertices in $A$. Any partial edges that appear will lie in
$E(A,B)$.

Now repeatedly apply $\Split$ choosing to replace vertices in $B$
rather than $A$, until no more partial edges remain. Consequently we
have modified the weighted graph into a new weighted graph with the
same densities and now $|A|=1$. By Lemma
\ref{ClassSize1_NotExtremal} we know this is not extremal and hence
$(G,w)$ was not extremal.
\end{proof}


Our next lemma is an adaptation of a convexity argument by Bondy,
Shen, Thomass\'e and Thomassen (see proof of Theorem 3
\cite{Paper_BSTT}). This allows us to reduce the problem of
determining which tripartite graphs can be both vertex minimal and
extremal to those with at most three vertices in each vertex class.
\begin{lemma}\label{MaxClassSize3}
If $(G,w)\in\Tri$ is extremal and vertex minimal, then $|A|,
|B|,|C|\leq 3$
\end{lemma}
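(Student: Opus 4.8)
The plan is to show that if an extremal weighted tripartite graph $(G,w)$ has some class, say $A$, with $|A|\geq 4$, then we can find another weighted tripartite graph with the same edge densities, the same triangle density, but strictly fewer vertices --- contradicting vertex minimality. The key idea, adapted from the convexity argument of Bondy, Shen, Thomass\'e and Thomassen, is that for fixed $w$ restricted to $B\cup C$ and fixed adjacency structure, the contribution of each vertex $a\in A$ to the relevant densities is a \emph{linear} function of $w(a)$. So fixing the total weight on $A$ equal to $1$, the edge densities $\beta$, $\gamma$ and the triangle density $t(G,w)$ are affine functions of the vector $(w(a))_{a\in A}$; the density $\alpha$ does not depend on $A$ at all.

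First I would make this precise: for each $a\in A$, let $g_\beta(a)=\sum_{c\in\Gamma_C(a)}w(c)$, $g_\gamma(a)=\sum_{b\in\Gamma_B(a)}w(b)$, and $g_t(a)=\sum_{bc:\,ab,ac,bc\in E(G),\,b\in B,c\in C}w(b)w(c)$. Then $\beta=\sum_{a\in A}w(a)g_\beta(a)$, $\gamma=\sum_{a\in A}w(a)g_\gamma(a)$, and $t(G,w)=\sum_{a\in A}w(a)g_t(a)$, while $\sum_{a\in A}w(a)=1$ and $w(a)\geq 0$. Consider the polytope $P\subseteq\mathbb{R}^A$ of nonnegative vectors summing to $1$ and satisfying the two affine constraints $\sum_a x(a)g_\beta(a)=\beta$ and $\sum_a x(a)g_\gamma(a)=\gamma$. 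This is a nonempty (it contains the original weight vector), bounded polytope cut out by one equality of the form $\sum x(a)=1$ plus two more equalities, so it lies in an affine subspace of codimension at least $3$ inside $\mathbb{R}^{|A|}$. I would then minimise the linear functional $x\mapsto\sum_a x(a)g_t(a)$ over $P$; since $(G,w)$ is extremal, the original vector already achieves the minimum, and the minimum is attained at a vertex $x^*$ of $P$. A vertex of a polytope in $\mathbb{R}^{|A|}$ defined by $3$ equality constraints and the nonnegativity constraints must have at most $3$ nonzero coordinates. Replacing the weights on $A$ by $x^*$ (and deleting the now zero-weight vertices) gives $(G',w')$ with the same $\alpha,\beta,\gamma$, the same triangle density $T_{\min}(\abg)$, and $|A'|\leq 3$; if $|A|\geq 4$ this strictly decreases the vertex count, contradicting vertex minimality. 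Applying the symmetric argument to $B$ and then $C$ gives $|A|,|B|,|C|\leq 3$.

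One technical point I would be careful about: after shrinking $A$, the graph $G'$ is still a genuine (simple, unweighted-as-a-graph) tripartite graph --- it is just an induced subgraph of $G$ together with the modified weighting --- so $(G',w')\in\Tri(\abg)$ and the notion of "vertex minimal" applies to it. Also one must check that in applying the polytope argument to $B$ after having already modified $A$, and then to $C$, nothing breaks: each reduction only changes the weights within the class being reduced (the adjacency and the weights on the other two classes are untouched), so the argument is cleanly iterable. The main obstacle --- really the only subtlety --- is the standard linear-programming fact that the minimum of a linear functional over a nonempty bounded polytope is attained at a vertex, and that a vertex of $\{x\geq 0:\ Mx=v\}$ with $M$ having $k$ rows has at most $k$ nonzero entries; this is elementary but should be stated carefully, and one should note that the relevant polytope here genuinely has all three equality constraints active (the weight-sum constraint together with the $\beta$- and $\gamma$-constraints), giving the bound $3$ rather than something larger.
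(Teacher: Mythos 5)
Your argument is correct and is essentially the paper's own proof: the paper's $\Reduce$ algorithm rests on exactly the same linearity of $\beta$, $\gamma$ and $t$ in the weights on a single class followed by an extreme-point step, phrased there as pushing $(\beta,\gamma,t)$ to the boundary of the convex hull of the points $(\beta_i,\gamma_i,t_i)\in\mathbb{R}^3$ and triangulating a facet, which is equivalent to your basic-feasible-solution argument in $\mathbb{R}^{|A|}$. One immaterial slip: the affine subspace cut out by your three equality constraints has codimension \emph{at most} $3$, not at least $3$ --- but it is precisely this upper bound that forces a vertex of your polytope to have at most three nonzero coordinates, so the conclusion stands.
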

Again we introduce an algorithm to prove this lemma.
\begin{algorithm}[$\Reduce$]\label{Alg_Reduce}
The algorithm $\Reduce$ takes as input $(G,w)\in\Tri$ and a vertex
class $X$ of $G$, satisfying $|X|>3$. Its output, represented by
$\Reduce(G,w,X)$, is a weighted tripartite graph, which has the same
edge densities as $(G,w)$, but with $|X|\leq 3$, and triangle
density at most that of $(G,w)$.

To help explain the algorithm we will suppose $X=A$, (the other
choices of $X$ work similarly). For each vertex $a_i\in A$ let
\begin{align*}
\beta_i = \sum_{c\in\Gamma_C(a_i)}w(c),\quad \gamma_i =
\sum_{b\in\Gamma_B(a_i)}w(b),\quad t_i = \sum_{bc\in E(B,C),
a_ibc\in T(G)}w(bc).
\end{align*}
By definition
\begin{align*}
\beta = \sum_{i=1}^{|A|}w(a_i)\beta_i,\quad \gamma =
\sum_{i=1}^{|A|}w(a_i)\gamma_i,\quad t(G,w) =
\sum_{i=1}^{|A|}w(a_i)t_i.
\end{align*}
Consider the convex hull
\[
P=\left\{\sum_{i=1}^{|A|}x_i(\beta_i,\gamma_i,t_i) :
\sum_{i=1}^{|A|}x_i=1 \;\;\text{and}\;\; x_i\geq 0\right\}.
\]
Setting $x_i=w(a_i)$ shows that $(\beta,\gamma,t(G,w))$ lies in $P$.
By varying the values of the $x_i$ we can decrease the value of
$t(G,w)$ to $t'$ such that $(\beta,\gamma,t')$ lies on the boundary
of $P$. Moreover, by triangulating the facet of $P$ containing
$(\beta,\gamma,t')$, we can express $(\beta,\gamma,t')$ as a convex
combination of at most three elements of
$\{(\beta_i,\gamma_i,t_i):1\leq i\leq |A|\}$. Consequently we can
write
\[
(\beta,\gamma,t') = \sum_{i=1}^{|A|}x_i(\beta_i,\gamma_i,t_i)
\]
where $\sum x_i = 1$ and at most three of the $x_i$ are positive,
the rest are zero. Now define a new weighting $w'$ for $G$ by
$w'(a_i)=x_i$, $w'(v)=w(v)$ for $v\in V(G)\setminus A$. The weighted
tripartite graph $(G,w')$ has the same edge densities as $(G,w)$ and
a new triangle density $t'$ satisfying $t'\leq t(G,w)$. Furthermore
we can remove the zero weighted vertices from $A$ so that $|A|\leq
3$ and the densities are unchanged.
\end{algorithm}

\begin{proof}[Proof of Lemma \ref{MaxClassSize3}]
Suppose $(G,w)$ is extremal and vertex minimal with, without loss of
generality, $|A|>3$. Now, using Algorithm \ref{Alg_Reduce},
$\Reduce(G,w,A)$ has the same densities as $(G,w)$ (since $(G,w)$ is
extremal), but it has fewer vertices, contradicting the vertex
minimality of $(G,w)$.
\end{proof}


\begin{lemma}\label{OppositeClassSizeIs3}
Let $(G,w)$ be a weighted tripartite graph. If there exist distinct
vertices $a_1,a_2\in A$ with $\Gamma_C(a_1)=\Gamma_C(a_2)$ and
$|B|=3$, then $(G,w)$ is not extremal or not vertex minimal.
\end{lemma}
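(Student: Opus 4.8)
The plan is to argue by contradiction: assume $(G,w)$ is extremal and vertex minimal, with distinct $a_1,a_2\in A$ satisfying $\Gamma_C(a_1)=\Gamma_C(a_2)$ and $|B|=3$. Since $(G,w)$ is vertex minimal we may assume $w(v)>0$ for all $v$. The strategy is to convert to a doubly-weighted graph by setting $p\equiv 1$, merge $a_1$ and $a_2$ into a single vertex via the $\Merge$ algorithm (this is legal since they have equal $C$-neighbourhoods, positive total weight, and $p$ is $1$ on the relevant edges), and then remove the resulting partial edges (which lie only in $E(A,B)$) by repeatedly applying $\Split$, always choosing to split vertices in $B$ rather than $A$. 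By Lemmas \ref{SplitDensity} and \ref{DoublyWeighted_SameAs_Weighted} (together with the density-preservation of $\Merge$) this produces a genuine weighted tripartite graph $(G'',w'')$ with exactly the same edge and triangle densities as $(G,w)$, in which $A$ has been reduced by one vertex but $B$ may have grown.

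The key point is then to control the size of $B$ in $(G'',w'')$. Splitting a vertex $b\in B$ along a partial edge $ba$ only happens when $ba$ is partial; after the split the copy $b_1$ inherits all of $b$'s neighbours and $b_0$ loses $a$, and crucially \emph{no new partial edges are created in $E(B,C)$} (the $C$-side is untouched), so each vertex of $B$ still has $p=1$ on all its edges to $C$, i.e. $\Gamma_C$ is unchanged for the $B$-vertices. I would argue that after all splitting is finished, the resulting $B$-vertices come in classes determined by their $C$-neighbourhood: since originally $|B|=3$, there were at most three distinct $C$-neighbourhoods among $B$-vertices, and $\Merge$/$\Split$ on the $A$–$B$ side never alters a $B$-vertex's neighbourhood in $C$. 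Hence in $(G'',w'')$ every pair of $B$-vertices with the same $C$-neighbourhood can be re-merged; doing so collapses $B$ back down to at most $3$ vertices while preserving densities, and we have strictly decreased $|A|$. This contradicts vertex minimality of $(G,w)$ — unless re-merging the $B$-vertices is impossible, which would require the $p$-values on their $B$–$C$ edges to differ from $1$, but those are all $1$ by the previous observation.

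The main obstacle I anticipate is the bookkeeping needed to justify that $\Split$ (applied on the $B$-side) genuinely terminates and that one can afterwards re-merge $B$ down to $\le 3$ vertices — this is essentially the same termination argument as in the proof of Lemma \ref{DoublyWeighted_SameAs_Weighted} (using a monovariant like $Z$ summed over $B$), but one must check it interacts correctly with the subsequent re-merging step and that merging two $B$-vertices with identical $C$-neighbourhoods is always permitted by the hypotheses of the $\Merge$ algorithm (namely $p=1$ on their edges to $C$ and positive total weight, the latter from $w(v)>0$). An alternative, possibly cleaner route avoiding the re-merge: after reducing $|A|$ by the $\Merge$, apply $\Reduce(G'',w'',B)$ (Algorithm \ref{Alg_Reduce}) to bring $|B|$ back to at most $3$ while not increasing the triangle density; since $(G,w)$ was extremal the triangle density cannot strictly decrease, so the densities are exactly preserved and we still have a graph with the same densities but fewer vertices than $(G,w)$, the desired contradiction. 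I would likely present this second version, as it sidesteps the delicate re-merging analysis entirely.
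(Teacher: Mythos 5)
Your preferred (second) version -- merge $a_1,a_2$ after setting $p\equiv 1$, eliminate the resulting $A$--$B$ partial edges by splitting on the $B$ side, then apply $\Reduce$ to $B$ and use extremality to rule out a strict drop in triangle density, contradicting vertex minimality -- is exactly the paper's proof, and your termination and $\Merge$-hypothesis checks are the right ones (the monovariant $Z$ summed over $B$ works verbatim). You were also right to discard the re-merging variant, since merging $B$-vertices by $C$-neighbourhood can recreate partial edges in $E(A,B)$; the $\Reduce$ route avoids this, just as in the paper.
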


\begin{proof}
Convert $(G,w)$ into a doubly-weighted tripartite graph and replace
$a_1,a_2$ with a vertex $a$ by applying $\Merge$ (we may assume
$w(a_1)+w(a_2)>0$ by vertex minimality of $(G,w)$). Now $A$ has
reduced in size by one. If there are partial edges they will lie
between classes $A$ and $B$. Use the $\Split$ algorithm to remove
them, choosing to replace vertices in $B$ rather than $A$. Now
convert the doubly-weighted graph back into a weighted graph. This
weighted graph will have the same densities as $(G,w)$, $A$ has one
less vertex, and $|B|\geq 3$. If $|B|=3$ then this weighted graph is
of smaller order than $(G,w)$. If $|B|>3$ we can use $\Reduce$ to
modify the weights of vertices in $B$, such that at most three of
them have a non-zero weight. Simply remove all vertices with zero
weight and the resulting graph will be of smaller order than $G$,
contradicting vertex minimality.
\end{proof}

\begin{lemma}\label{CollapsePairOfVertices}
Consider a weighted graph $(G,w)$. If there exist distinct vertices
$a_1,a_2\in A$ with $\Gamma(a_1)=\Gamma(a_2)$ then $(G,w)$ is not
vertex minimal.
\end{lemma}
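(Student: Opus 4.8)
The plan is to merge the two vertices $a_1$ and $a_2$ into a single vertex, just as in the proofs of Lemmas \ref{CollapseClass} and \ref{OppositeClassSizeIs3}, and to exploit the fact that here $a_1$ and $a_2$ have \emph{identical} full neighbourhoods (not merely identical neighbourhoods into one class): this will mean the merge creates no partial edges whatsoever, so we stay inside $\Tri$, one vertex smaller, with no need to invoke $\Split$ afterwards.

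First I would dispose of vertices of weight zero: if $w(a_1)=0$ (symmetrically $w(a_2)=0$) we may simply delete $a_1$ from $(G,w)$, which alters no edge or triangle density and strictly decreases the order, so $(G,w)$ is not vertex minimal. Hence we may assume $w(a_1),w(a_2)>0$, in particular $w(a_1)+w(a_2)>0$. Next I would convert $(G,w)$ into a doubly-weighted tripartite graph $(G,w,p)\in\DTri$ by setting $p(e)=1$ for every $e\in E(G)$; this has the same edge and triangle densities as $(G,w)$. Since $\Gamma(a_1)=\Gamma(a_2)$ we have $\Gamma_B(a_1)=\Gamma_B(a_2)$, so together with $w(a_1)+w(a_2)>0$ and $p\equiv 1$ the hypotheses of the $\Merge$ algorithm are satisfied (with witness class $Y=B$). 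Let $(G',w',p')=\Merge(G,w,p,a_1,a_2)$; it has the same edge and triangle densities as $(G,w,p)$, and $a_1,a_2$ have been replaced by a single vertex $x$ with $w'(x)=w(a_1)+w(a_2)>0$, so $|V(G')|<|V(G)|$.

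The only point requiring a moment's care — and the only place the full hypothesis $\Gamma(a_1)=\Gamma(a_2)$ is used, rather than just $\Gamma_B(a_1)=\Gamma_B(a_2)$ — is to verify that $(G',w',p')$ contains no partial edges. Edges not incident with $x$ retain their $p$-value $1$. For a neighbour $b$ of $x$ in $B$, the $\Merge$ algorithm already guarantees $p'(xb)=1$. For $c\in C$ with $xc\in E(G')$ we have $a_1c\in E(G)$ or $a_2c\in E(G)$, but $\Gamma_C(a_1)=\Gamma_C(a_2)$ forces both of these, whence $p'(xc)=(w(a_1)p(a_1c)+w(a_2)p(a_2c))/w'(x)=(w(a_1)+w(a_2))/w'(x)=1$. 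Thus $p'(e)=1$ for all $e\in E(G')$, so discarding $p'$ yields a genuine weighted tripartite graph $(G',w')\in\Tri$ with the same edge densities as $(G,w)$ (hence lying in the same $\Tri(\abg)$), the same triangle density, and strictly fewer vertices. Therefore $(G,w)$ is not vertex minimal. I do not anticipate a genuine obstacle here; the work is entirely in checking the $\Merge$ hypotheses and the no-partial-edges claim just described.
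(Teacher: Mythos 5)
Your proof is correct and is essentially the paper's argument: both amount to merging the two clones $a_1,a_2$ into a single vertex carrying their combined weight, which preserves all edge and triangle densities because the neighbourhoods coincide. The paper simply does this directly in one line (``remove $a_2$ and add its weight to $a_1$''), whereas you route it through the $\Merge$/$\DTri$ formalism and verify no partial edges arise; the extra machinery is sound but not needed here.
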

\begin{proof}
Remove vertex $a_2$ and increase the weight of $a_1$ by $w(a_2)$.
The resulting weighted graph has the same densities as $(G,w)$.
\end{proof}

\begin{lemma}\label{ReplaceBy8}
Given a tripartite graph $G$ with $|A|=3$, not necessarily distinct,
vertices $a_0,a_1\in A$, $b_0,b_1\in B$ such that $a_0b_0\notin
E(G), a_1b_1\in E(G)$ and $C_{a_0b_0}=C_{a_1b_1}$, construct two
graphs $G_1, G_2$ as follows:
\begin{itemize}
\item Let $G_1'=G-a_1b_1$. Construct $G_1$ from $G_1'$ by adding a new vertex $a_2$ to $A$ and
adding edges incident to $a_2$ so that $\Gamma^{G_1}(a_2)=\Gamma^{G_1'}(a_0)\cup\{b_0\}$.
\item Let $G_2'=G+a_0b_0$. Construct $G_2$ from $G_2'$ by adding a new vertex $a_2$ to $A$ and adding edges incident to $a_2$ so that $\Gamma^{G_2}(a_2)=\Gamma^{G_2'}(a_1)\setminus\{b_1\}$.
\end{itemize}
Note that in $G_1$ and $G_2$ we have $|A|=4$.
Let $\mathcal{H}$ denote the family of eight graphs constructed from $G_1$ or $G_2$ by deleting a single vertex from $A$.

If $(G,w)$ is extremal and vertex minimal then there exists
$H\in \mathcal{H}$ and a weighting $w'$ of $H$,
such that $(H,w')$ has the same edge densities as $(G,w)$ and is
also extremal and vertex minimal.
\end{lemma}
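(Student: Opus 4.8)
The plan is a weight-transfer argument in the spirit of $\Split$ and $\Reduce$. The crucial point is that the hypothesis $C_{a_0b_0}=C_{a_1b_1}$ is exactly the borderline case of the inequality used in Lemma \ref{Order_Sset}: it lets us slide weight off the edge $a_1b_1$ onto the (currently absent) edge $a_0b_0$ without changing \emph{either} the edge densities \emph{or} the triangle density.

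First I would use vertex minimality to assume $w(v)>0$ for every $v$, and convert $(G,w)$ to $(G,w,p)\in\DTri$ with $p\equiv 1$. Writing $\mu_0=w(a_0)w(b_0)$ and $\mu_1=w(a_1)w(b_1)$, for $\delta\in[0,\min(\mu_0,\mu_1)]$ I would define a doubly-weighted graph by adding the edge $a_0b_0$ (if not already present), setting $p(a_0b_0)=\delta/\mu_0$ and $p(a_1b_1)=1-\delta/\mu_1$, and leaving $p\equiv1$ elsewhere. Repeating the computation from the proof of Lemma \ref{Order_Sset}, the change in the $A$--$B$ edge density is $\delta-\delta=0$, the $A$--$C$ and $B$--$C$ densities are untouched, and the change in triangle density is $\delta\bigl(\sum_{c\in C_{a_0b_0}}w(c)-\sum_{c\in C_{a_1b_1}}w(c)\bigr)$, which vanishes because $C_{a_0b_0}=C_{a_1b_1}$. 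So all densities are preserved for every admissible $\delta$, and the graph stays extremal.

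Next I would push $\delta$ to its maximum value $\min(\mu_0,\mu_1)$ and deal with the single remaining partial edge by one application of $\Split$, always choosing to split a vertex of class $A$. If $\mu_1<\mu_0$ the partial edge is $a_0b_0$; splitting $a_0$ produces (by Lemma \ref{SplitDensity}, and after checking no new partial edges appear, so the result is genuinely in $\Tri$) a weighted graph which is exactly $G_1$, with all four of its $A$-weights positive and one more vertex than $G$. Symmetrically, if $\mu_0<\mu_1$ the partial edge is $a_1b_1$, and splitting $a_1$ gives $G_2$. If $\mu_0=\mu_1$ there is no partial edge left, $|A|$ is still $3$, and the graph coincides with $G_1-a_0$ ($=G_2-a_1$), which is already a member of $\mathcal H$; this finishes that case directly. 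In the first two cases we now have an extremal weighting of $G_1$ or $G_2$, with $|A|=4$ and $|V(G_1)|=|V(G_2)|=|V(G)|+1$. Applying $\Reduce$ to class $A$ keeps the edge densities and does not increase the triangle density, so it stays equal to $T_{\min}(\abg)$; it leaves a weighting supported on at most three vertices of $A$ and deletes the rest. It must delete exactly one of them, for if it deleted two or more we would have an extremal element of $\Tri(\abg)$ on fewer than $|V(G)|$ vertices, contradicting the vertex minimality of $(G,w)$. Hence the output $H$ is $G_1$ or $G_2$ with a single $A$-vertex removed, i.e. $H\in\mathcal H$, and it comes with a weighting $w'$ satisfying $(H,w')\in\Tri(\abg)$, $t(H,w')=T_{\min}(\abg)$ (so $(H,w')$ is extremal), and $|V(H)|=|V(G)|$.

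Finally, vertex minimality of $(H,w')$ comes for free: being vertex minimal only requires that no element of $\Tri(\abg)$ with triangle density $t(H,w')=T_{\min}(\abg)$ has fewer than $|V(H)|=|V(G)|$ vertices, and such an element would contradict the vertex minimality of $(G,w)$. I expect the real work to be bookkeeping rather than ideas: one must check carefully that the graph coming out of $\Split$ is, up to relabelling, \emph{literally} $G_1$ or $G_2$ as constructed in the statement, and one must treat the degenerate possibilities $a_0=a_1$ and $b_0=b_1$ — there the weight-transfer and $\Split$ computations are identical but the neighbourhood identifications have to be redone. A minor secondary point is confirming that $\Reduce$ really does retain three positive $A$-weights; this holds because the weighting of $G_1$/$G_2$ produced by $\Split$ already has all four $A$-weights strictly positive, so the vertex-minimality contradiction rules out dropping below three.
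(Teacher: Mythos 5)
Your proposal is correct and follows essentially the same route as the paper: convert to a doubly-weighted graph, transfer weight from $a_1b_1$ to $a_0b_0$ (density-preserving precisely because $C_{a_0b_0}=C_{a_1b_1}$), push the transfer to its limit, remove the single remaining partial edge with one application of $\Split$ to obtain $G_1$ or $G_2$, and then use $\Reduce$ together with extremality and vertex minimality to conclude that exactly one vertex of $A$ is discarded. Your explicit handling of the boundary case $\mu_0=\mu_1$ (no partial edge left) and of the positivity of the new $A$-weights is a slightly more careful version of what the paper leaves implicit.
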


\begin{proof}
Our proof will involve first showing that there exists a weighting
$w''$ of $G_1, G_2$ such that either $(G_1,w'')$ or $(G_2,w'')$ have
the same densities as $(G,w)$.

Form a doubly-weighted graph $(G,w,p)$ with $p(e)=1$ for all $e\in
E(G)$. Since $C_{a_0b_0}=C_{a_1b_1}$, if we add the edge $a_0b_0$ to
$G$ we can move weight from edge $a_1b_1$ to $a_0b_0$, by modifying
$p(a_1b_1)$ and $p(a_0b_0)$, whilst keeping the edge and triangle
densities constant. If we move as much weight as we can from
$a_1b_1$ to $a_0b_0$, one of two things must happen. Either we
manage to make $p(a_0b_0)=1$ before $p(a_1b_1)$ reaches $0$, or
$p(a_1b_1)$ reaches $0$ (so we remove edge $a_1b_1$) and
$p(a_0b_0)\leq 1$. In either case we have at most one partial edge
either $a_1b_1$ or $a_0b_0$. We can remove the partial edge by an
application of the $\Split$ algorithm, introducing an extra vertex
into class $A$. The two possible resulting graphs are $G_2, G_1$
respectively. Hence there exists a weighting $w''$ such that either
$(G_1,w'')$ or $(G_2,w'')$ have the same densities as $(G,w)$.

Without loss of generality let us assume $(G_1,w'')$ has the same
densities as $(G,w)$. Since $|A|=4$ for $G_1$, applying the
$\Reduce$ algorithm will remove at least one vertex from $A$ to
create a doubly-weighted graph, say $(H,w')$, with the same edge
densities and possibly a smaller triangle density. However, since
$t(G,w)=t(G_1,w'')\geq t(H,w')$ and $(G,w)$ is extremal, we must
have $t(G,w)=t(H,w')$, implying $(H,w')$ is extremal. We can also
conclude, by the vertex minimality of $(G,w)$, that $H$ is formed
from $G_1$ by removing exactly one vertex from $A$.
\end{proof}

\subsection{Search for extremal examples}\label{Sec_Search}

We have now developed a number of important conditions that any
vertex minimal extremal examples must satisfy. These will,
eventually, allow us to conduct an exhaustive search for such graphs
(with the aid of a computer). This will then leave us with a small
number of possible extremal graphs which we will deal with by hand.

\begin{figure}[tbp]
\begin{center}
\includegraphics[height=4cm]{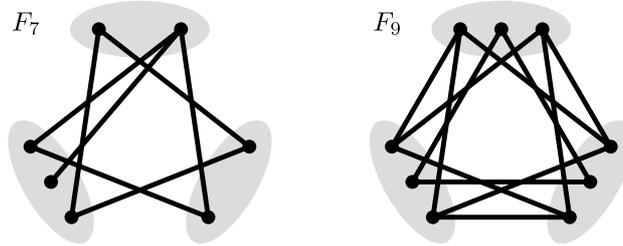}
\caption{The tripartite complements of the graphs $F_7$ and
$F_9$.}\label{Fig_F7F9}
\end{center}
\end{figure}
Recall that the tripartite graphs $G$ and $H$ (as always with
specified tripartitions) are \emph{strongly-isomorphic} if there is
a graph isomorphism $f:G\to H$ such that the image of each vertex
class in $G$ is a vertex class in $H$.

It turns out that if we can eliminate graphs that are
strongly-isomorphic to two particular examples: $F_7$ and $F_9$ (see
Figure \ref{Fig_F7F9}), then our computer search will be able to
eliminate many more possible extremal vertex minimal examples, and
thus reduce the amount of work we will finally need to do by hand.

For ease of notation we will henceforth implicitly label the
vertices and vertex classes of all figures as in Figure
\ref{Fig_Canonical}. Indices of vertices start at $1$ and increase
clockwise. Recall that the weight associated with a vertex is
indicated with a hat above the label, for example $w(b_1)$ is
represented as $\wb_1$.
\begin{figure}[tbp]
\begin{center}
\includegraphics[height=4cm]{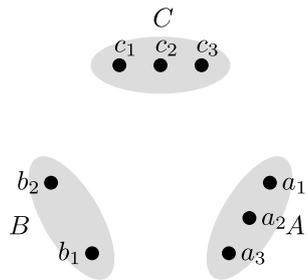}
\caption{Canonical labelling of vertices and vertex
classes.}\label{Fig_Canonical}
\end{center}
\end{figure}

\begin{lemma}\label{F7_NotExtremal}
If $(\abg)\in R_2$ then for all weightings $w$ such that
$(F_7,w)\in\Tri(\abg)$, $(F_7,w)$ is either not extremal or not
vertex minimal.
\end{lemma}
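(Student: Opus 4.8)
The plan is to suppose for contradiction that some weighting $w$ with $(F_7,w)\in\Tri(\abg)$ is both extremal and vertex minimal, and then to apply the structural tools already developed — chiefly Corollary \ref{Order_subset}, Lemma \ref{CollapsePairOfVertices}, Lemma \ref{OppositeClassSizeIs3}, and Lemma \ref{ReplaceBy8} — to derive a contradiction. First I would fix the canonical labelling of $F_7$ from Figure \ref{Fig_F7F9} (using Figure \ref{Fig_Canonical}) and record, for each pair of vertex classes, which sets $C_{ab}$, $A_{bc}$, $B_{ac}$ arise. Vertex minimality lets us assume $w(v)>0$ for every vertex (zero-weight vertices can be deleted), and Lemma \ref{CollapsePairOfVertices} lets us assume no two vertices in the same class have identical neighbourhoods. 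The next move is to check, by inspecting the tripartite complement of $F_7$, whether there exist not-necessarily-distinct $a_0,a_1\in A$, $b_0,b_1\in B$ (or the analogous configuration across another pair of classes) with $a_0b_0\notin E(F_7)$, $a_1b_1\in E(F_7)$, and $C_{a_0b_0}\subsetneq C_{a_1b_1}$; if so, Corollary \ref{Order_subset} immediately finishes the argument.

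If no such proper containment exists, the remaining obstruction is a pair with $C_{a_0b_0}=C_{a_1b_1}$ (with $a_0b_0$ a non-edge and $a_1b_1$ an edge), which is exactly the hypothesis of Lemma \ref{ReplaceBy8}. Here I would need $|A|=3$ (the count in $F_7$ should make one class, after possible relabelling, have exactly three vertices, since $F_7$ has seven vertices in total); Lemma \ref{ReplaceBy8} then produces an extremal vertex-minimal $(H,w')$ with the same edge densities, where $H$ lies in an explicit eight-element family $\mathcal H$ obtained from $G_1$ or $G_2$ by deleting one vertex of $A$. The key point is that each graph in $\mathcal H$ has only seven vertices, so extremality plus vertex minimality of $(H,w')$ can be contradicted by a second application of the same machinery — either some $H\in\mathcal H$ again admits a proper-containment pair (kill it by Corollary \ref{Order_subset}), or it has a repeated neighbourhood (kill it by Lemma \ref{CollapsePairOfVertices}), or the configuration forces a class of size giving a contradiction with Lemma \ref{OppositeClassSizeIs3} or Lemma \ref{ClassSize1_NotExtremal}. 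One must also rule out the degenerate branch where $a_0,a_1$ or $b_0,b_1$ coincide, which typically collapses a triangle and again triggers Corollary \ref{Order_subset}.

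The main obstacle is the bookkeeping in the second round: after applying Lemma \ref{ReplaceBy8} one has eight candidate seven-vertex graphs to examine, and for each one must verify that it is not strongly-isomorphic to $F_7$ itself (otherwise the argument loops) and then exhibit the specific pair of vertices or repeated neighbourhood that disqualifies it. I would organise this as a short case table listing, for each $H\in\mathcal H$, which earlier lemma applies and the witnessing vertices, checking along the way that the edge densities staying in $R_2$ keeps all the hypotheses (e.g. $0<\abg<1$ from Lemma \ref{abgInR3_implies_Not01}(ii), and the strict inequality of Lemma \ref{CyclicIneq_NotExtremal}) available. If a direct reapplication of Lemma \ref{ReplaceBy8} is cleaner than case analysis for some branch, I would iterate it, noting that the vertex count does not grow, so the process must terminate in a graph to which one of Corollary \ref{Order_subset}, Lemma \ref{CollapsePairOfVertices}, Lemma \ref{OppositeClassSizeIs3}, or Lemma \ref{ClassSize1_NotExtremal} applies, contradicting the assumed extremality and vertex minimality of $(F_7,w)$.
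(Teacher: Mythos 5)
There is a genuine gap: the structural lemmas you propose to combine demonstrably do not eliminate $F_7$, which is exactly why the paper gives it a bespoke argument. Writing $F_7$ with classes $A=\{a_1,a_2\}$, $B=\{b_1,b_2,b_3\}$, $C=\{c_1,c_2\}$ as in Figure \ref{Fig_F7F9}, one checks directly that no two vertices in a class have identical neighbourhoods (so Lemma \ref{CollapsePairOfVertices} is silent), that for every orientation of the classes no common-neighbourhood set of a non-adjacent pair is \emph{properly} contained in that of an adjacent pair (the relevant sets are $C_{a_1b_1}=\{c_2\}$, $C_{a_2b_3}=\{c_1\}$ versus $C_{a_2b_2}=\{c_1\}$ and $\emptyset$ for the other edges, and similarly across the other class pairs), so Corollary \ref{Order_subset} never fires; and although $\Gamma_C(b_2)=\Gamma_C(b_3)$, the opposite class $A$ has size $2$, not $3$, so Lemma \ref{OppositeClassSizeIs3} does not apply either. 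The one tool that does engage is Lemma \ref{ReplaceBy8}, via the non-edge $a_2b_3$ and the edge $a_2b_2$ with $C_{a_2b_3}=C_{a_2b_2}=\{c_1\}$; but carrying out its construction, the eight graphs in $\mathcal{H}$ split into four that contain two $B$-vertices with identical neighbourhoods (killed by Lemma \ref{CollapsePairOfVertices}) and four that are strongly isomorphic to $F_7$ itself. So the conclusion of Lemma \ref{ReplaceBy8} only returns you to an extremal vertex-minimal weighting of $F_7$: the loop you flag as something to ``rule out'' actually occurs, and your appeal to termination (``the vertex count does not grow, so the process must terminate'') is not a progress argument — the iteration can cycle on $F_7$ forever.

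The paper's proof is of a different character and cannot be recovered from the generic machinery alone: assuming $(F_7,w)$ extremal and vertex minimal, it uses Lemma \ref{CyclicIneq_NotExtremal} to get the three strict inequalities $t(F_7,w)<\alpha\beta+\gamma-1$, etc., then shifts all weight from $b_3$ to $b_2$, computing $\gamma'=\gamma+\wa_2\wb_3$ and $t(F_7,w')=t(F_7,w)+\wa_2\wb_3\wc_1$, and shows the three inequalities survive the shift; the delicate case needs $\wc_1\le 1/2$, which is extracted from Lemma \ref{Order_Sset} applied to $C_{a_1b_1}=\{c_2\}$ and $C_{a_2b_2}=\{c_1\}$. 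Deleting the now weightless $b_3$ leaves a weighting of $F_6$, and the separate lower bound of Lemma \ref{F6_NotExtremal}, $t(F_6,\cdot)\ge\min\{\alpha\beta+\gamma-1,\alpha\gamma+\beta-1,\beta\gamma+\alpha-1\}$, gives the contradiction. To repair your proposal you would need this (or some other) quantitative ingredient; the purely combinatorial reductions you list are exactly the ones the computer search already applies, and $F_7$ is one of the two graphs they provably fail to remove.
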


To prove Lemma \ref{F7_NotExtremal}, we first need to prove the
following result about the graph $F_6$ given in Figure \ref{Fig_F6}.
\begin{lemma}\label{F6_NotExtremal}
For any $\abg\in[0,1]$ and weighting $w$ satisfying
$(F_6,w)\in\Tri(\abg)$ we have
\begin{equation}\label{Eq_F6}
t(F_6,w)\geq \min\{\alpha\beta+\gamma-1, \alpha\gamma+\beta-1,
\beta\gamma+\alpha-1\}.
\end{equation}
\end{lemma}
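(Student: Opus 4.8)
The plan is to proceed exactly as in the proof of Lemma~\ref{7Vertex_MinDensity}: parametrise all weightings of $F_6$ that achieve edge densities $\abg$, write down the (single) triangle density as an explicit function of the free parameters, minimise it, and check that the minimum is at least $\min\{\alpha\beta+\gamma-1,\alpha\gamma+\beta-1,\beta\gamma+\alpha-1\}$. First I would fix the canonical labelling of $F_6$ from Figure~\ref{Fig_F6} and use the three constraints $\sum_{a}\wa = \sum_b \wb = \sum_c \wc = 1$ together with the three edge-density equations to solve for as many vertex weights as possible in terms of a minimal set of free parameters (likely just one or two, e.g.\ $\wa_1$ as in the $H_7$ computation). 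This expresses each edge density as a low-degree polynomial in the free parameter(s), and solving gives rational expressions for the remaining weights.

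Next I would substitute these into the formula for $t(F_6,w)$. Since $F_6$ has a single triangle (reading off its tripartite complement in Figure~\ref{Fig_F6}), the triangle density is a product $\wa_i\wb_j\wc_k$, which after substitution becomes an explicit one-variable rational function. I would then minimise this over the admissible range of the free parameter --- the range being dictated by the requirement that all implied vertex weights lie in $[0,1]$ --- and simplify the resulting lower bound. I expect the minimiser to occur either at an interior critical point (where an AM--GM or completing-the-square manoeuvre, as in Lemma~\ref{7Vertex_MinDensity}, gives a clean closed form) or at an endpoint of the admissible interval; in the endpoint case the bound should degenerate to one of the three quantities $\alpha\beta+\gamma-1$ etc., which is why the statement is a minimum of three symmetric expressions rather than a single formula.

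The main obstacle I anticipate is the case analysis forced by which weights are allowed to be $0$ or $1$: unlike in the $R_2$ regime of Lemma~\ref{7Vertex_MinDensity}, here $\abg$ ranges over all of $[0,1]$, so I cannot assume $0<\abg<1$, and degenerate weightings (some vertex of $F_6$ having weight $0$) must be handled separately --- in those degenerate cases $F_6$ collapses to a smaller graph and the bound should follow from a direct check or from an earlier lemma. A secondary nuisance is that the algebra may not be symmetric in $\alpha,\beta,\gamma$ under the chosen parametrisation, so I would need to verify that the final bound is genuinely the minimum of the three cyclic expressions, possibly by checking each of the three cases for which term is smallest. Once the polynomial identities are set up, though, everything reduces to routine (if slightly tedious) algebra of the same flavour as the $H_6$ and $H_7$ computations already carried out.
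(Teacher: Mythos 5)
There is a genuine gap, and it starts with a structural misreading of $F_6$. Unlike $H_7$, which has seven vertices and hence one genuine degree of freedom once the three edge densities are fixed (this is what makes the minimisation over $\wa_1$ in Lemma \ref{7Vertex_MinDensity} meaningful), $F_6$ has exactly two vertices in each class, so the three unknowns $\wa_1,\wb_1,\wc_1$ are pinned down by the three density equations up to the roots of a quadratic. There is no free parameter to minimise over, no interior critical point to find by AM--GM, and no endpoint of an admissible interval at which the bound ``degenerates'' to one of the cyclic expressions; that expectation, which is the backbone of your plan, simply does not apply here. What your route actually reduces to is: solve the quadratic, obtain weight formulas involving $\pm\sqrt{\cdot}$ (as in the $H_6$ analysis with $\Delta(\abg)$), and then verify, for each root and with a case split over which of $\alpha\beta+\gamma-1$, $\alpha\gamma+\beta-1$, $\beta\gamma+\alpha-1$ is smallest, an inequality between algebraic expressions in $\abg$ containing square roots. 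You defer exactly this step to ``routine algebra'', but it is the entire content of the lemma and it is far from clear that it is tractable in that form; nothing in the proposal indicates how it would go through.

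The paper avoids all of this by never inverting the system: it keeps $\wa_1,\wb_1,\wc_1\in[0,1]$ as the variables, writes $t(F_6,w)=(1-\wa_1)(1-\wb_1)\wc_1$ and the three densities as polynomials in them, and argues by contradiction. If $t(F_6,w)<\alpha\beta+\gamma-1$, the inequality rearranges to $(1-2\wa_1)(1-2\wb_1)(1-\wc_1)\wc_1+\wa_1\wb_1\wc_1<0$, which forces $\wa_1<1/2$ or $\wb_1<1/2$; in the first case the assumption $t(F_6,w)<\alpha\gamma+\beta-1$ rearranges to a sum of manifestly non-negative terms being negative, and symmetrically in the second case with $\beta\gamma+\alpha-1$. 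That sign-analysis contradiction is the missing idea; without it (or some substitute for the square-root case analysis your parametrisation creates) your proposal does not yet constitute a proof.
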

\begin{figure}[tbp]
\begin{center}
\includegraphics[height=4cm]{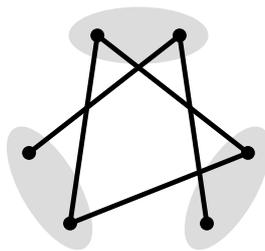}
\caption{The tripartite complement of the graph
$F_6$.}\label{Fig_F6}
\end{center}
\end{figure}
\begin{proof}
Suppose (\ref{Eq_F6}) fails to hold. Since $F_6$ contains only one
triangle: $a_2b_2c_1$, and using the fact that $\wa_2 = 1-\wa_1$,
$\wb_2= 1-\wb_1$, $\wc_2 = 1-\wc_1$, we have
\begin{align}
t(F_6,w) &= (1-\wa_1)(1-\wb_1)\wc_1\label{Eq_F6_t}\\
\alpha   &= \wb_1(1-\wc_1)+(1-\wb_1)\wc_1\label{Eq_F6_a}\\
\beta    &= \wa_1(1-\wc_1)+(1-\wa_1)\wc_1\label{Eq_F6_b}\\
\gamma   &= 1-\wa_1\wb_1\label{Eq_F6_g},
\end{align}
Substitute (\ref{Eq_F6_t}), (\ref{Eq_F6_a}), (\ref{Eq_F6_b}),
(\ref{Eq_F6_g}), into $t(F_6,w)<\alpha\beta+\gamma-1$ and rearrange
to obtain
\begin{equation}\label{Eq_F6neg}
(1-2\wa_1)(1-2\wb_1)(1-\wc_1)\wc_1+\wa_1\wb_1\wc_1<0.
\end{equation}
This implies (since $\wc_1,1-\wc_1,\wa_1,\wb_1\geq 0$) that
$0<1-2\wa_1$ or $0<1-2\wb_1$ (if $1-2\wa_1\leq 0$ and $1-2\wb_1\leq
0$ then the LHS of (\ref{Eq_F6neg}) would be non-negative).

If $0<1-2\wa_1$ is true then substitute (\ref{Eq_F6_t}),
(\ref{Eq_F6_a}), (\ref{Eq_F6_b}), (\ref{Eq_F6_g}), into
$t(F_6,w)<\alpha\gamma+\beta-1$ and rearrange to obtain
\[
\wa_1\wb_1\wc_1(2-\wb_1) + \wa_1(1-\wb_1)^2(1-\wc_1) +
(1-2\wa_1)(1-\wb_1)(1-\wc_1)<0.
\]
But each term in the LHS is strictly non-negative so we have a
contradiction.

If instead $0<1-2\wb_1$ holds then looking at
$t(F_6,w)<\beta\gamma+\alpha-1$ yields
\[
\wa_1\wb_1\wc_1(2-\wa_1) + \wb_1(1-\wa_1)^2(1-\wc_1) +
(1-2\wb_1)(1-\wa_1)(1-\wc_1)<0,
\]
which is similarly false.
\end{proof}

\begin{proof}[Proof of Lemma \ref{F7_NotExtremal}]
Suppose $(F_7,w)$ is extremal and vertex minimal. We may assume
$w(v)\in (0,1)$ for all $v\in V(F_7)$. If
$t(F_7,w)\geq\alpha\beta+\gamma-1$ then by Lemma
\ref{CyclicIneq_NotExtremal} $(F_7,w)$ is not extremal, so we may
assume that
\begin{align}
t(F_7,w) &< \alpha\beta+\gamma-1, \label{Eq_F7_ab+g-1}
\intertext{and similarly}
t(F_7,w) &< \alpha\gamma+\beta-1, \label{Eq_F7_ag+b-1}\\
t(F_7,w) &< \beta\gamma+\alpha-1. \label{Eq_F7_bg+a-1}
\end{align}
Consider moving all the weight from $b_3$ to $b_2$ to create the
following weighting $w'$ of $F_7$ defined formally as $w'(v)=w(v)$
for all $v\in V(G)\setminus\{b_2,b_3\}$, $w'(b_2)=w(b_2)+w(b_3)$,
and $w'(b_3)=0$. Changing the weighting from $w$ to $w'$ does not
change the edge density between $A$ and $C$, or $B$ and $C$, but it
may have increased the edge density between $A$ and $B$ and the
triangle density. Let us call the new edge density, between $A$ and
$B$, $\gamma'$. Its value can be expressed in terms of the old
weights and densities
\[
\gamma' = \gamma+\wa_2\wb_3.
\]
Similarly
\[
t(F_7,w') = t(F_7,w)+\wa_2\wb_3\wc_1.
\]

If we can show that
\begin{align}
t(F_7,w') &< \alpha\beta+\gamma'-1, \label{Eq_F7_ab+g-1_Prime}\\
t(F_7,w') &< \alpha\gamma'+\beta-1, \label{Eq_F7_ag+b-1_Prime}\\
t(F_7,w') &< \beta\gamma'+\alpha-1, \label{Eq_F7_bg+a-1_Prime}
\end{align}
all hold then, since $w'(b_3)=0$, we could remove $b_3$ from $F_7$
leaving all densities unchanged, and the resulting graph would be
strongly-isomorphic to $F_6$. This contradicts Lemma
\ref{F6_NotExtremal}, hence our assumption that $(F_7,w)$ is
extremal and vertex minimal must be false.

First let us show that (\ref{Eq_F7_ab+g-1_Prime}) holds. Consider
\begin{align*}
\alpha\beta+\gamma'-1-t(F_7,w') &=
\alpha\beta+(\gamma+\wa_2\wb_3)-1-(t(F_7,w)+\wa_2\wb_3\wc_1)\\
&= \alpha\beta+\gamma-1-t(F_7,w)+\wa_2\wb_3(1-\wc_1)\\
&>0.
\end{align*}
The inequality holds because $\alpha\beta+\gamma-1-t(F_7,w)>0$ by
(\ref{Eq_F7_ab+g-1}) and $\wa_2,\wb_3,\wc_1\in (0,1)$.

To prove (\ref{Eq_F7_ag+b-1_Prime}) we look at
\begin{align*}
\alpha\gamma'+\beta-1-t(F_7,w') &=
\alpha(\gamma+\wa_2\wb_3)+\beta-1-(t(F_7,w)+\wa_2\wb_3\wc_1)\\
&= \alpha\gamma+\beta-1-t(F_7,w)+\wa_2\wb_3(\alpha-\wc_1).
\end{align*}
We know $\alpha\gamma+\beta-1-t(F_7,w)>0$ by (\ref{Eq_F7_ag+b-1}),
and $\wa_2,\wb_3>0$, so all we have to do is show that
$\alpha-\wc_1\geq 0$. By definition $\alpha$ is the sum of the
weighted edges between $B$ and $C$, hence
\begin{align*}
\alpha &= (\wb_2+\wb_3)\wc_1+\wb_1\wc_2\\
&= (1-\wb_1)\wc_1+\wb_1(1-\wc_1).
\end{align*}
Therefore
\begin{align*}
\alpha-\wc_1 &= (1-\wb_1)\wc_1+\wb_1(1-\wc_1)-\wc_1\\
&= \wb_1(1-2\wc_1).
\end{align*}
Since $\wb_1$ is greater than $0$, we require $\wc_1\leq 1/2$.

Consider $C_{a_1b_1}=\{c_2\}$ and $C_{a_2b_2}=\{c_1\}$. Construct
$(F_7,w,p)\in\DTri$ by setting $p(e)=1$ for all edges of $F_7$. If
$\wc_2<\wc_1$ then, by Lemma \ref{Order_Sset}, we know we can
achieve a smaller triangle density. Therefore $\wc_1\leq\wc_2$ must
hold, or equivalently $\wc_1\leq 1/2$ (as $\wc_1+\wc_2=1$).

Similarly to prove (\ref{Eq_F7_bg+a-1_Prime}) consider
\begin{align*}
\beta\gamma'+\alpha-1-t(F_7,w') =
\beta\gamma+\alpha-1-t(F_7,w)+\wa_2\wb_3(\beta-\wc_1).
\end{align*}
By (\ref{Eq_F7_bg+a-1}) we need only show $\beta-\wc_1\geq 0$, which
is true because $\beta-\wc_1 = \wa_1(1-2\wc_1)$, $\wa_1>0$, and
$\wc_1\leq 1/2$.
\end{proof}

\begin{lemma}\label{F9_NotExtremal}
For all weightings $w$ such that $(F_9,w)\in\Tri$, $(F_9,w)$ is
either not extremal or not vertex minimal.
\end{lemma}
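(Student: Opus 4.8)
The plan is to mimic the structure of the proof of Lemma~\ref{F7_NotExtremal}: assume for contradiction that $(F_9,w)$ is extremal and vertex minimal, note we may take $w(v)\in(0,1)$ for all $v$, and use Lemma~\ref{CyclicIneq_NotExtremal} to assume $t(F_9,w)<\min\{\alpha\beta+\gamma-1,\alpha\gamma+\beta-1,\beta\gamma+\alpha-1\}$. First I would read off from Figure~\ref{Fig_F7F9} which pairs $a_ib_j$ are non-edges and compute the sets $C_{a_ib_j}$ for the relevant pairs. The key local move is the same as before: identify a vertex in one class (say $b_3\in B$, or whatever the extra vertex relative to $F_7$ turns out to be) all of whose weight can be pushed onto another vertex $b_2$ so that, after deleting the now zero-weight vertex, the graph becomes strongly-isomorphic to $F_7$ (or to one of $F_6$, $H_6$ — whichever six- or seven-vertex graph the excerpt has already handled). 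If all three strict inequalities $t<\alpha\beta+\gamma''-1$ etc.\ survive the shift $\gamma\mapsto\gamma'=\gamma+\wa_i\wb_3$, $t\mapsto t+\wa_i\wb_3\wc_k$, then we contradict the earlier lemma for that smaller graph.

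Concretely, for each of the three cyclic inequalities I would write $X\gamma'+Y+\ldots-1-t(F_9,w') = (X\gamma+Y+\ldots-1-t(F_9,w)) + \wa_i\wb_3(\text{coefficient})$, where the first bracket is positive by assumption, $\wa_i,\wb_3>0$, and the remaining task is to show the coefficient is $\geq 0$. For the ``$\alpha\beta+\gamma'-1$'' inequality the coefficient is $1-\wc_k>0$ automatically. For the other two, the coefficient will be of the form $\alpha-\wc_k$ or $\beta-\wc_k$ (up to relabelling), and as in Lemma~\ref{F7_NotExtremal} I would expand $\alpha$ as the sum of weighted $B$--$C$ edges of $F_9$ to rewrite $\alpha-\wc_k$ as $\wb_\ell(1-2\wc_k)$ or similar, reducing everything to the single inequality $\wc_k\leq 1/2$. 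That inequality is then obtained from Lemma~\ref{Order_Sset}/Corollary~\ref{Order_subset}: find vertices with $C_{a_0b_0}=\{c_k\}\subsetneq$ (or $=$) $C_{a_1b_1}=\{c_{k'}\}$ with $a_1b_1$ an edge and $a_0b_0$ a non-edge, forcing $\wc_k\leq\wc_{k'}$, hence $\wc_k\leq 1/2$.

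The main obstacle is combinatorial bookkeeping rather than any hard estimate: I need to verify from the actual picture of $F_9$ (a) that there really is a vertex whose weight can be legally pushed entirely onto a neighbour in the strong-isomorphism sense (i.e.\ that the edge configurations of $F_9$ and the target smaller graph match after deletion — this requires that the two $B$-vertices in question have the right relationship of neighbourhoods in $A$ and $C$), and (b) that the $C_{ab}$-containments needed to invoke Lemma~\ref{Order_Sset} for the $\wc_k\leq1/2$ step are present. It is conceivable that a single shift does not suffice and one must first apply $\Merge$/$\Split$ or a preliminary reduction; if $F_9$ is not reducible to $F_7$ directly one might instead reduce to $F_6$ or argue via $H_6$ using Lemma~\ref{6Vertex_ValidRegion} and Theorem~\ref{Thm_H6}. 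If none of the three post-shift inequalities can be salvaged simultaneously, a fallback is to observe directly that the forced structure of $F_9$ (with the weight relations implied by extremality and $C_{ab}$-minimality) already pins down enough weights to compute $t(F_9,w)$ and show it equals or exceeds one of $\alpha\beta+\gamma-1$, $\alpha\gamma+\beta-1$, $\beta\gamma+\alpha-1$, contradicting Lemma~\ref{CyclicIneq_NotExtremal}.
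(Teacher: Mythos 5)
There is a genuine gap: what you have written is a contingency plan built on the template of Lemma~\ref{F7_NotExtremal}, and that template does not fit $F_9$. The graph $F_9$ is not ``$F_7$ plus an extra vertex'': it has three vertices in every class and its three triangles $a_1b_3c_2$, $a_2b_1c_3$, $a_3b_2c_1$ are pairwise disjoint. The weight-shift in the $F_7$ argument is legitimate only because the two $B$-vertices involved have identical neighbourhoods in $C$, so that pushing all of $\wb_3$ onto $b_2$ perturbs a single edge density ($\gamma\mapsto\gamma+\wa_2\wb_3$) and the triangle density in a controlled way. No such pair exists in $F_9$: since all classes of $F_9$ have size $3$, any two same-class vertices with equal neighbourhoods in another class would already be killed by Lemma~\ref{OppositeClassSizeIs3}, and $F_9$ is precisely one of the graphs that survives those tests and needs a bespoke argument. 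So the shift you propose changes two edge densities at once, the post-shift graph is not strongly-isomorphic to $F_6$, $F_7$ or $H_6$, and the ``$\wc_k\le 1/2$'' step has no identified footing. Your own hedges (``whatever the extra vertex turns out to be'', ``it is conceivable that a single shift does not suffice'', the unexecuted fallback of computing $t(F_9,w)$ directly) are exactly where the missing idea should be.

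The paper's actual proof uses a different construction: delete the edge $a_3c_1$, split $c_1$ into two vertices $c_1,c_4$ with weights $w(a_1)w(c_1)/(w(a_1)+w(a_3))$ and $w(a_3)w(c_1)/(w(a_1)+w(a_3))$, and join $c_4$ to all of $A$ and to $\Gamma_B(c_1)$. One checks this preserves every edge density and the triangle density, and it manufactures two $A$-vertices with $\Gamma_C(a_1)=\Gamma_C(a_3)$. Applying $\Reduce$ to the now four-vertex class $C$ (using extremality and vertex minimality to see that nothing is lost) and then Lemma~\ref{OppositeClassSizeIs3} finishes the argument. Note also that this argument is purely structural and needs no appeal to Lemma~\ref{CyclicIneq_NotExtremal} or to $(\abg)\in R_2$, whereas your outline would at best cover the $R_2$ case. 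To repair your proposal you would need to supply this (or some other) density-preserving restructuring of $F_9$; the reduction-to-$F_7$ route as described will not go through.
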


\begin{proof}
Let us assume that $(F_9,w)$ is extremal and vertex minimal, in
which case $w(v)\neq 0$ for all $v\in V(F_9)$. Construct
$(F_{10},w')\in\Tri$ from $(F_9,w)$ as follows:
\begin{itemize}
\item Create $F_{10}$ from $F_9$ by removing the edge $a_3c_1$.
Add a new vertex into $C$, labelled $c_4$, and add in edges so that
$\Gamma^{F_{10}}(c_4)=\Gamma_B^{F_9}(c_1)\cup A$.
\item Set $w'(v)=w(v)$ for all $v\in V(F_{10})\setminus\{c_1,c_4\}$. Let
\[
w'(c_1)=\frac{w(a_1)w(c_1)}{w(a_1)+w(a_3)},\qquad\mbox{and}\qquad
w'(c_4)=\frac{w(a_3)w(c_1)}{w(a_1)+w(a_3)}.
\]
\end{itemize}
The edge density between $A$ and $B$ remains unchanged and it is
easy to check that the density between $B$ and $C$ also hasn't
changed. The change in edge density between $A$ and $C$ is
\begin{align*}
w(a_2)w'(c_1)+w'(c_4)-w(a_2)w(c_1)-w(a_3)w(c_1) = 0.
\end{align*}
The triangles in $F_9$ are $a_1b_3c_2$, $a_2b_1c_3$, $a_3b_2c_1$ and
the triangles in $F_{10}$ are $a_1b_3c_2$, $a_2b_1c_3$, $a_1b_2c_4$,
$a_3b_2c_4$. Hence the change in triangle density between $(F_9,w)$
and $(F_{10},w')$ is
\[
\bigl(w(a_1)+w(a_3)\bigr)w(b_2)w'(c_4)-w(a_3)w(b_2)w(c_1) = 0.
\]
Therefore $(F_9,w)$ and $(F_{10},w')$ have the same triangle and
edge densities.

Note that $\Gamma_C^{F_{10}}(a_1)=\Gamma_C^{F_1{10}}(a_3)=\{c_2,c_4\}$.
Since $|C|=4$ we can apply the $\Reduce$ algorithm to class $C$ in $F_{10}$, and
the resultant output $(F'',w'')\in\Tri$ has the same edge densities
and the same triangle density (because $(F_9,w)$ is extremal).
Moreover $|V(F'')|=|V(F_9)|$ (as $(F_9,w)$ is vertex minimal)
and $\Gamma_C^{F''}(a_1)=\Gamma_C^{F''}(a_3)$. Hence we can apply Lemma
\ref{OppositeClassSizeIs3} to $(F'',w'')$, showing that it is
either not extremal or not vertex minimal and so the same must be
true of $(F_9,w)$.
\end{proof}

Our goal is to produce a list of all tripartite graphs $G$ for which
there exists a weighting $w$ such that $(G,w)\in\Tri(\abg)$ is
extremal and vertex minimal for some $(\abg)\in R_2$. With this aim
in mind we have developed a number of results that allow us to show
$(G,w)$ is not extremal or not vertex minimal by simply examining
$G$, \emph{irrespective of the weighting $w$}.

By Lemmas \ref{ClassSize1_NotExtremal} and \ref{MaxClassSize3} we
need only consider tripartite graphs $G$ in which all vertex classes
contain either two or three vertices. This reduces the problem to a
finite search. However, tripartite graphs with $|A|=|B|=|C|=3$ can
contain $27$ possible edges, so naively there are at least $2^{27}
\approx 100,000,000$ graphs to consider. We can decrease the
possible number of graphs by looking at only those that contain
triangles, since otherwise $(\abg)\notin R$ by Theorem
\ref{Thm_T=0}. By Lemma \ref{CollapsePairOfVertices} we know that if
$G$ has a class containing a pair of vertices with identical
neighbours then it is not vertex minimal (because we can move all
the weight from one vertex to the other). Similarly the more
technical results given in Corollary \ref{Order_subset}, Lemmas
\ref{CollapseClass}, \ref{OppositeClassSizeIs3}, \ref{ReplaceBy8},
\ref{F7_NotExtremal}, and \ref{F9_NotExtremal} can also be used to
eliminate graphs without knowledge of the vertex weights. Tripartite
graphs that are strongly-isomorphic to graphs eliminated by these
results will also not be extremal or not vertex minimal, and so may
also be discarded.

Unfortunately applying Corollary \ref{Order_subset}, Lemmas
\ref{CollapseClass}, \ref{OppositeClassSizeIs3},
\ref{CollapsePairOfVertices}, \ref{ReplaceBy8},
\ref{F7_NotExtremal}, \ref{F9_NotExtremal} and Theorem \ref{Thm_T=0}
to over $100,000,000$ tripartite graphs would take too long to
perform by hand, but can easily be done by computer. A C++
implementation is given in the Appendix. This algorithm produces a
list of possible extremal vertex minimal tripartite graphs in $R_2$,
which are equivalent up to strong-isomorphism to the fourteen
graphs, given in Figure \ref{Fig_Program_Output}. To decrease the
number further we will have to check each of these graphs by hand.

\begin{figure}[tbp]
\begin{center}
\includegraphics[width=12cm]{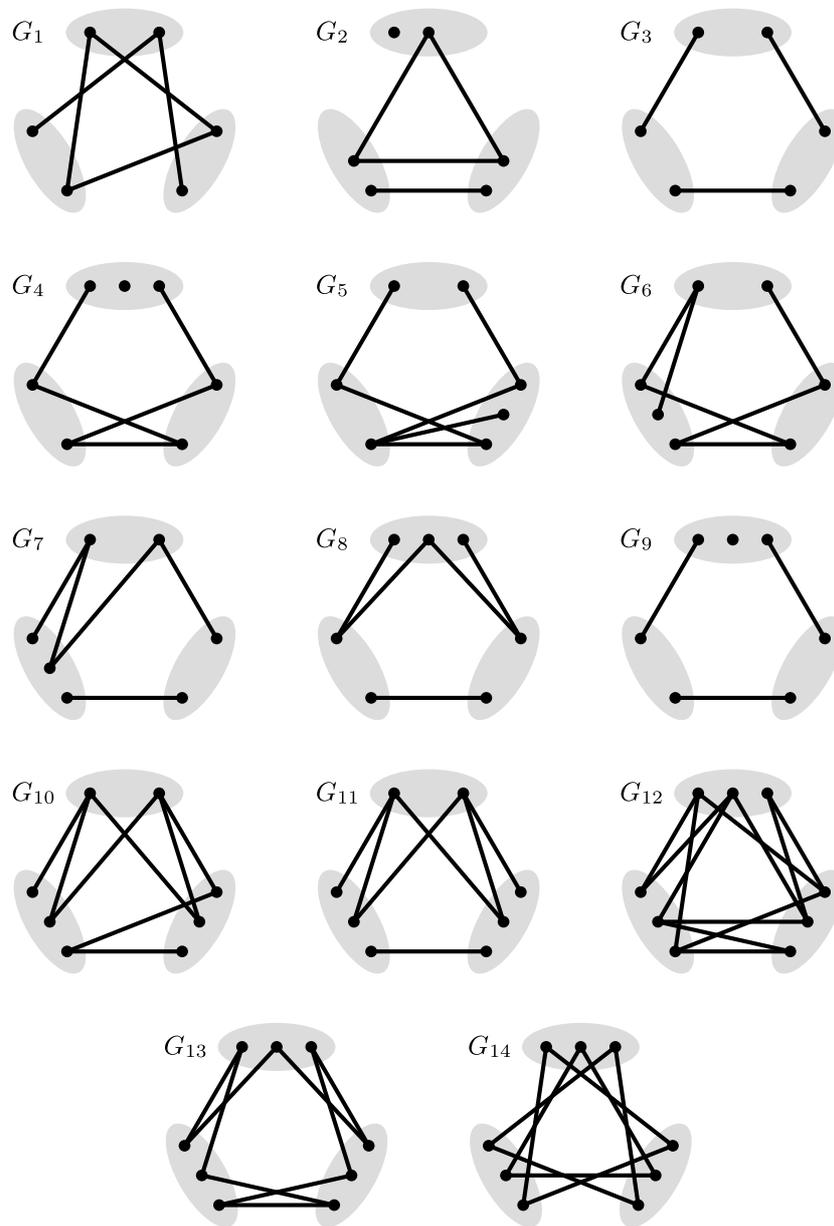} 
\caption{The tripartite complements of the graphs produced by the
computer search.}\label{Fig_Program_Output}
\end{center}
\end{figure}

\subsection{Specific Graphs}\label{Sec_SpecificGraphs}
To complete the proof of Theorem \ref{Thm_H7H7'H9} we need to
eliminate the eleven graphs found by the computer search, other than
$H_7,H_7'$ and $H_9$. (In the list of fourteen graphs these are
$\Gviii,\Gvii$ and $\Gxiii$ respectively.)

To be precise we will show that for each $G_i$, $1\leq i\leq 14$,
$i\neq 7,8,13$, if $(\abg)\in R_2$ then there does not exist a
weighting $w$ such that $(G_i,w)\in \Tri(\abg)$ and $(G_i,w)$ is
both extremal and vertex minimal.

\begin{lemma}\label{G1_NotExtremal}
If $(\abg)\in R_2$ then for all weightings $w$ such that
$(\Gi,w)\in\Tri(\abg)$, $(\Gi,w)$ is not extremal.
\end{lemma}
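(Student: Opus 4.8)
The plan is to show that \emph{every} weighting $w$ with $(\Gi,w)\in\Tri(\abg)$ has triangle density strictly larger than $T_{\min}(\abg)$, so $(\Gi,w)$ can never be extremal. Concretely I aim to establish the bound
\[
t(\Gi,w)\geq\min\{\alpha\beta+\gamma-1,\ \alpha\gamma+\beta-1,\ \beta\gamma+\alpha-1\},
\]
which, combined with Lemma~\ref{CyclicIneq_NotExtremal} (applicable since $(\abg)\in R_2$), immediately gives $t(\Gi,w)>T_{\min}(\abg)$ and hence that $(\Gi,w)$ is not extremal. This is preferable to comparing directly against the $H_7$ value $2\sqrt{\alpha\beta(1-\gamma)}+2\gamma-2$, since the cyclic expressions above are polynomial and avoid square roots.

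First I would fix the canonical labelling of the vertices and vertex classes of $\Gi$ as in Figure~\ref{Fig_Canonical}, read off its edge set from the tripartite complement depicted in Figure~\ref{Fig_Program_Output}, and use the normalisations $\sum_{a\in A}\wa=\sum_{b\in B}\wb=\sum_{c\in C}\wc=1$ to eliminate one weight variable per class. I would then write $\alpha(\Gi,w)$, $\beta(\Gi,w)$, $\gamma(\Gi,w)$ and $t(\Gi,w)$ as explicit polynomials in the remaining weight parameters, exactly as was done for $H_6$ in Lemma~\ref{6Vertex_IsBest} and for $F_6$ in Lemma~\ref{F6_NotExtremal}.

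Next, mirroring the $F_6$ argument, I would assume for contradiction that all three of $t(\Gi,w)<\alpha\beta+\gamma-1$, $t(\Gi,w)<\alpha\gamma+\beta-1$, $t(\Gi,w)<\beta\gamma+\alpha-1$ hold, substitute the polynomial expressions into each, and clear denominators to obtain polynomial inequalities in the weights. I would then split into cases according to the signs of the linear factors that appear (typically factors such as $1-2\wc_1$, $\gamma-\wa_1$, or $1-2\wa_1$), and in each case exhibit one of the three rearranged polynomials as a sum of manifestly non-negative terms, producing a contradiction; the cyclic symmetry of the three expressions is what makes it plausible that some inequality can be salvaged in every sign regime. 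As a possible shortcut, if $\Gi$ happens to contain vertices $a_0,b_0,a_1,b_1$ with $a_0b_0\notin E(\Gi)$, $a_1b_1\in E(\Gi)$ and $C_{a_0b_0}\subsetneq C_{a_1b_1}$, one could instead invoke Corollary~\ref{Order_subset} (after first passing to a vertex-minimal representative via Lemma~\ref{MaxClassSize3}); or, if deleting a zero-weight vertex turns $\Gi$ into a graph strongly-isomorphic to $F_6$, $F_7$ or $H_6$, one could reduce directly to Lemma~\ref{F6_NotExtremal}, Lemma~\ref{F7_NotExtremal} or Lemma~\ref{CyclicIneq_NotExtremal}.

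The main obstacle I anticipate is the case analysis in the polynomial inequality: finding the right decomposition into non-negative summands in each sign regime is delicate and depends on the precise edge set of $\Gi$, so the computation cannot be made uniform across the fourteen graphs and must be tailored to $\Gi$. Once such a decomposition is identified, the remaining steps — verifying the edge-density identities, checking the algebra, and invoking Lemma~\ref{CyclicIneq_NotExtremal} — are routine.
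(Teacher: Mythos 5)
Your strategy is exactly the paper's: establish $t(\Gi,w)\geq\min\{\alpha\beta+\gamma-1,\ \alpha\gamma+\beta-1,\ \beta\gamma+\alpha-1\}$ and conclude via Lemma~\ref{CyclicIneq_NotExtremal}. The only shortcut you miss is that $\Gi$ is strongly-isomorphic to $F_6$ itself (no zero-weight vertex deletion needed), so the polynomial case analysis you propose to redo is precisely Lemma~\ref{F6_NotExtremal} and can simply be cited.
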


\begin{proof}
$\Gi$ is strongly-isomorphic to $F_6$. Hence Lemma
\ref{F6_NotExtremal} and Lemma \ref{CyclicIneq_NotExtremal} imply
$(\Gi,w)$ is not extremal.
\end{proof}

\begin{lemma}
If $(\abg)\in R_2$ then for all weightings $w$ such that
$(\Gii,w)\in\Tri(\abg)$, $(\Gii,w)$ is not extremal.
\end{lemma}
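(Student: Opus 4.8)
The plan is to eliminate $\Gii$ in the same spirit as $\Gi$: identify it as strongly-isomorphic to a small graph already shown to be non-extremal, or reduce it (via $\Merge$, $\Split$, and $\Reduce$) to such a graph. First I would read off the tripartite complement of $\Gii$ from Figure \ref{Fig_Program_Output} and compute, for each vertex class, the neighbourhoods of the vertices in the other two classes. If two vertices in some class have identical neighbourhoods in one of the opposite classes, Lemma \ref{CollapseClass} (or, if the full neighbourhoods agree, Lemma \ref{CollapsePairOfVertices}) finishes the argument immediately. Failing that, I would check the sets $C_{a_ib_j}$ (and the analogous sets for the other two pairs of classes) to see whether one is a proper subset of another with the corresponding non-edge/edge configuration, so that Corollary \ref{Order_subset} applies.

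If no such structural coincidence is present, I would fall back on the reduction machinery. Since $\Gii$ is vertex minimal and extremal by assumption, every vertex has positive weight, so we may form the doubly-weighted graph $(\Gii,w,p)$ with $p\equiv 1$ and then use $\Merge$ to collapse vertices sharing a neighbourhood in $C$ (Lemma \ref{CollapseClass}), or $\Split$ followed by $\Reduce$ to shrink an oversized class, aiming to land on a graph strongly-isomorphic to one of $F_6$, $F_7$, $F_9$, or to a graph on few enough vertices that Lemma \ref{ClassSize1_NotExtremal} applies. In each case the cited lemma, combined with Lemma \ref{CyclicIneq_NotExtremal}, forces $t(\Gii,w) \geq \min\{\alpha\beta+\gamma-1,\ \alpha\gamma+\beta-1,\ \beta\gamma+\alpha-1\} > T_{\min}(\abg)$, contradicting extremality.

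If $\Gii$ resists all of these — i.e.\ it genuinely has a new edge pattern not covered by a named small graph — then I would argue directly as in the proof of Lemma \ref{ClassSize1_NotExtremal}: use the explicit vertex classes to solve for all vertex weights in terms of $\abg$ and one free parameter, write the triangle density $t(\Gii,w)$ as an explicit rational function of that parameter, minimise it, and show the minimum is at least one of the three quantities $\alpha\beta+\gamma-1$, $\alpha\gamma+\beta-1$, $\beta\gamma+\alpha-1$; then invoke Lemma \ref{CyclicIneq_NotExtremal}. The main obstacle will be the last possibility: verifying the resulting polynomial inequality in $\abg$ and the free parameter over the region $R_2$. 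This is the same kind of calculation as in the proof of Lemma \ref{F6_NotExtremal}, where one rearranges the failing inequality into a sum of manifestly non-negative terms; finding the right grouping of terms (possibly after using one of the defining inequalities of $R$, such as $\alpha\beta+\gamma>1$) is the delicate part, though it is routine once the correct decomposition is spotted.
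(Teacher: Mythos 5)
Your first two lines of attack are dead ends by construction: $\Gii$ is one of the fourteen graphs \emph{output} by the computer search, which has already filtered by Corollary \ref{Order_subset} and Lemmas \ref{CollapseClass}, \ref{OppositeClassSizeIs3}, \ref{CollapsePairOfVertices}, \ref{ReplaceBy8}, \ref{F7_NotExtremal} and \ref{F9_NotExtremal}, so no strong-isomorphism to an already-eliminated graph and no subset/neighbourhood coincidence of the kind you describe can be present. Your third fallback, however, is exactly what the paper does, and it is simpler than you anticipate: all three classes of $\Gii$ have size two, so the densities are polynomials in $\wa_1,\wb_1,\wc_1$ alone with no free parameter left over and no minimisation needed. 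Assuming extremality, Lemma \ref{CyclicIneq_NotExtremal} forces $t(\Gii,w)<\alpha\beta+\gamma-1$; substituting the explicit expressions and rearranging gives $\wa_1(1-\wb_1)(1-\wc_1)(1+\wc_1)<0$, which is manifestly impossible --- precisely the ``sum of non-negative terms'' decomposition you predicted, and here it needs no use of the defining inequalities of $R$. So your plan identifies the right method and the right model (the $F_6$ argument), but the essential verification is deferred rather than carried out; as it happens, that verification is a one-line factorisation rather than the delicate case analysis you were bracing for.
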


\begin{proof}
Suppose $(\Gii,w)$ is extremal, by Lemma
\ref{CyclicIneq_NotExtremal} we must have
$t(\Gii,w)<\alpha\beta+\gamma-1$. The edge and triangle densities
are given by
\begin{align*}
\alpha &= \wb_1+(1-\wb_1)\wc_1,\\
\beta &= 1-\wa_1+\wa_1\wc_1,\\
\gamma &= \wa_1\wb_1+(1-\wa_1)(1-\wb_1),\\
t(\Gii,w) &= \wa_1\wb_1\wc_1+(1-\wa_1)(1-\wb_1)\wc_1.
\end{align*}
Substituting into $t(\Gii,w)<\alpha\beta+\gamma-1$ and simplifying
yields
\[
\wa_1(1-\wb_1)(1-\wc_1)(1+\wc_1)<0
\]
which is false.
\end{proof}

\begin{lemma}
For $(\abg)\in R_2$ there exist no weightings $w$ of $\Giii$ such
that $(\Giii,w)\in\Tri(\abg)$.
\end{lemma}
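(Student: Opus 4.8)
The plan is to argue by a direct computation with vertex weights, in the spirit of the preceding lemma for $\Gii$, except that the goal is now to show the edge-density region of $\Giii$ is disjoint from $R_2$ rather than to bound a triangle density. First I would fix the canonical labelling of Figure~\ref{Fig_Canonical}, read off from the tripartite complement of $\Giii$ exactly which pairs of vertices are non-edges, and use the normalisation $\sum_a\wa_i=\sum_b\wb_i=\sum_c\wc_i=1$ to eliminate one weight per class. This yields explicit polynomial formulas for $\alpha,\beta,\gamma$ in a small number of parameters, each constrained to lie in $[0,1]$.

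Since $R_2=R\setminus R_1$ and every point of $R_2$ satisfies $\Delta(\abg)<0$, it suffices to prove that for every admissible choice of weights the resulting triple has $\Delta(\abg)\geq 0$ (so $(\abg)\notin R_2$); failing that, it suffices to show that $(\abg)$ always violates one of the three defining strict inequalities of $R$. Concretely I would substitute the parametric expressions into $\Delta(\abg)=\alpha^2+\beta^2+\gamma^2-2\alpha\beta-2\alpha\gamma-2\beta\gamma+4\alpha\beta\gamma$ and try to rewrite the result as a sum of terms, each a product of the nonnegative quantities $\wa_i,\ 1-\wa_i,\ \wb_i,\dots$; such a decomposition certifies $\Delta\geq 0$ on the whole parameter cube and finishes the proof. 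If $\Delta$ does not have a fixed sign there, the alternative is to assume for contradiction that $(\abg)\in R_2\subseteq R$, so that all of $\alpha\beta+\gamma>1$, $\alpha\gamma+\beta>1$, $\beta\gamma+\alpha>1$ hold, then substitute into whichever of $\alpha\beta+\gamma-1$, $\alpha\gamma+\beta-1$, $\beta\gamma+\alpha-1$ the density region of $\Giii$ lies on the wrong side of, and again exhibit a nonnegative-term decomposition forcing that quantity to be $\leq 0$, a contradiction.

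The main obstacle is purely algebraic: deciding which of these routes applies to $\Giii$ and then finding the explicit nonnegative-term factorisation that certifies the sign. The decomposition is not canonical and has to be found by trial, exactly as the choice $t(\Gii,w)<\alpha\beta+\gamma-1$ had to be spotted in the previous lemma. A minor point to watch is keeping careful track of which weights are genuinely independent and over which intervals they range, so that the ``product of nonnegative terms'' assertion is actually legitimate; but once the correct target expression is identified, the expansion and factorisation are entirely mechanical.
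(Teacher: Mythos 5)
Your plan is correct, but it takes a more computational route than the paper, which simply observes that $\Giii$ is strongly-isomorphic to $H_6$ and invokes Lemma \ref{6Vertex_ValidRegion}: that lemma's forward direction already shows that any $(\abg)$ realised by a weighting of $H_6$ satisfies $\Delta(\abg)\geq 0$ (there, by solving a quadratic for $\wa_1$ and using that a real root forces a nonnegative discriminant), so no such triple can lie in $R_2$. Your direct substitution works, and in fact more cleanly than you anticipate: writing the densities of $\Giii$ (equivalently $H_6$) as $\alpha=1-\wc_1+\wb_1\wc_1$, $\beta=1-\wa_1+\wa_1\wc_1$, $\gamma=1-\wb_1+\wa_1\wb_1$ and substituting into $\Delta$ gives the identity
\[
\Delta(\abg)=\bigl(\wa_1\wb_1\wc_1-(1-\wa_1)(1-\wb_1)(1-\wc_1)\bigr)^2\geq 0,
\]
valid for all weights in $[0,1]$ with no case analysis or division, which immediately excludes $R_2$. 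Two caveats on your stated certificate format: $\Delta$ cannot be written as a positive combination of products of the quantities $\wa_i,1-\wa_i,\dots$, because it vanishes at interior points of the cube (e.g.\ all weights equal to $1/2$), so the certificate you must find is the perfect square above rather than a ``product of nonnegative factors'' decomposition; and your fallback route (showing the densities of $\Giii$ violate one of the defining inequalities of $R$) cannot succeed, since $H_6$ does realise densities throughout $R_1\subseteq R$ — only the $\Delta\geq 0$ route is available. What your approach buys is a self-contained, division-free verification; what the paper's buys is brevity, reusing work already done for $H_6$.
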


\begin{proof}
$\Giii$ is strongly-isomorphic to $H_6$. Hence the result follows
immediately from Lemma \ref{6Vertex_ValidRegion}.
\end{proof}

\begin{lemma}\label{G4_NotExtremal}
If $(\abg)\in R_2$ then for all weightings $w$ such that
$(\Giv,w)\in\Tri(\abg)$, $(\Giv,w)$ is either not extremal or not
vertex minimal.
\end{lemma}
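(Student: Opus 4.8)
The plan is to argue by contradiction in the structural spirit of the proofs of Lemmas \ref{OppositeClassSizeIs3} and \ref{F9_NotExtremal}: suppose $(\Giv,w)\in\Tri(\abg)$ is extremal and vertex minimal, so that $w(v)>0$ for every $v\in V(\Giv)$. Reading the edges of $\Giv$ off its tripartite complement in Figure \ref{Fig_Program_Output} (with the canonical labelling of Figure \ref{Fig_Canonical}), I would first look for not-necessarily-distinct vertices $a_0,a_1\in A$, $b_0,b_1\in B$ with $a_0b_0\notin E(\Giv)$, $a_1b_1\in E(\Giv)$ and $C_{a_0b_0}\subseteq C_{a_1b_1}$. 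If the inclusion is proper, Corollary \ref{Order_subset} finishes immediately; if it is an equality we are in the situation of Lemma \ref{ReplaceBy8}. Since $\Giv$ has survived the computer search, neither of these alone can be decisive, so I expect the argument to need one more step.

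That extra step, which I expect to be the crux, is an explicit edge-redistribution modelled on the construction of $F_{10}$ from $F_9$: pick a suitable vertex — say $c_1\in C$ — incident to an edge $a_jc_1$ that we wish to delete, split $c_1$ into two vertices $c_1,c_4$ in $C$, distribute $w(c_1)$ between them in the ratio dictated by the weights of the relevant neighbours of $c_1$ in $A$ (so as to leave the $A,C$ and $B,C$ edge densities unchanged), and choose the new edges at $c_4$ so that the triangle contributions also balance. The point of a careful split is that the resulting weighted graph $(\Giv',w')$, having the same three edge densities and the same triangle density as $(\Giv,w)$, now has two vertices of $A$ (or of $B$) with identical neighbourhoods in $C$. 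Applying $\Reduce$ to class $C$ of $\Giv'$ then preserves all densities (because $(\Giv,w)$ is extremal) and does not increase the order (because $(\Giv,w)$ is vertex minimal), leaving a graph that still has a repeated-$\Gamma_C$ pair and with $|B|=3$; Lemma \ref{OppositeClassSizeIs3} (or, if the reduction instead drives a class down to a single vertex, Lemma \ref{ClassSize1_NotExtremal} via Lemma \ref{CollapseClass}) yields the contradiction.

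If no such redistribution presents itself, the fallback is the brute-force route used for $\Gii$: introduce the vertex weights $\wa_1,\wb_1,\wc_1$ (and whatever further weights the vertices of $\Giv$ require), write $\alpha,\beta,\gamma$ and $t(\Giv,w)$ as polynomials in them, substitute into $t(\Giv,w)<\alpha\beta+\gamma-1$ together with its two cyclic shifts, and show the resulting polynomial inequality is false on $[0,1]$, so that by Lemma \ref{CyclicIneq_NotExtremal} the graph cannot be extremal. The main obstacle in either approach is bookkeeping: in the structural approach, checking that the split is exactly weight-balanced in all three edge densities and in the triangle density and that the invoked reduction lemma genuinely applies to the reduced graph; in the algebraic approach, carrying out the simplification so that the sign of $t(\Giv,w)-\min\{\alpha\beta+\gamma-1,\alpha\gamma+\beta-1,\beta\gamma+\alpha-1\}$ becomes manifest.
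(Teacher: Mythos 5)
Your proposal is a menu of candidate strategies rather than a proof, and the decisive step is never carried out. The first route (Corollary \ref{Order_subset} / Lemma \ref{ReplaceBy8}) you yourself concede cannot work since $\Giv$ survived the computer search. The second route, a vertex-splitting construction modelled on $F_9\to F_{10}$, is left entirely hypothetical: you do not identify the edges of $\Giv$, do not exhibit a split that balances all three edge densities and the triangle density, and do not verify that the result has a repeated $\Gamma_C$-neighbourhood to which Lemma \ref{OppositeClassSizeIs3} applies. There is no reason to believe such a split exists for $\Giv$, and the paper does not use one. What remains is your ``fallback'', which is indeed the direction the paper takes --- reduce to Lemma \ref{CyclicIneq_NotExtremal} by showing $t(\Giv,w)\geq\min\{\alpha\beta+\gamma-1,\alpha\gamma+\beta-1,\beta\gamma+\alpha-1\}$ --- but you have not done the computation, and the phrase ``show the resulting polynomial inequality is false on $[0,1]$'' hides the actual content. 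Unlike $\Gii$, the graph $\Giv$ has a one-parameter family of weightings realising given $(\abg)$, so a single substitution does not settle the sign.

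Concretely, the paper writes $\alpha=1-\wb_2\wc_1$, $\beta=1-\wa_1\wc_3$, $\gamma=\wa_1\wb_2$, $t(\Giv,w)=\wa_1\wb_2(1-\wc_1-\wc_3)$, eliminates $\wa_1,\wb_2,\wc_3$ in favour of the free parameter $\wc_1$, obtains
\begin{equation*}
t(\Giv,w)=\gamma-\gamma\wc_1-\frac{(1-\alpha)(1-\beta)}{\wc_1},
\end{equation*}
and observes that this is concave in $\wc_1$, hence minimised at an endpoint of the feasible interval $[\,1-\alpha,\ (1-\alpha)/\gamma\,]$ forced by $\wa_1,\wb_2\leq 1$; the endpoint values are exactly $\alpha\gamma+\beta-1$ and $\beta\gamma+\alpha-1$, and Lemma \ref{CyclicIneq_NotExtremal} finishes. (An equivalent elementary finish, which your fallback could have reached: $t-(\alpha\gamma+\beta-1)=\wa_1(1-\wb_2)(\wc_3-\wb_2\wc_1)$ and $t-(\beta\gamma+\alpha-1)=\wb_2(1-\wa_1)(\wc_1-\wa_1\wc_3)$, and both cannot be negative since that would give $\wc_1<\wa_1\wc_3<\wa_1\wb_2\wc_1\leq\wc_1$.) Either way, some such argument exploiting the remaining degree of freedom is the missing core of your proof.
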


\begin{proof}
Let us assume $(\Giv,w)\in\Tri(\abg)$ is vertex minimal, and so
$w(v)\neq 0$ for all $v\in V(\Giv)$. By Lemma
\ref{abgInR3_implies_Not01} (ii) we also have $\abg\neq 0,1$. The
densities in terms of the vertex weights $\wa_1, \wb_2, \wc_1,$ and
$\wc_3$, are as follows,
\begin{align*}
\alpha &= 1-\wb_2\wc_1,\\
\beta  &= 1-\wa_1\wc_3,\\
\gamma &= \wa_1\wb_2,\\
t(\Giv,w) &= \wa_1\wb_2(1-\wc_1-\wc_3).
\end{align*}
We can use these equations to write $\wa_1,\wb_2,\wc_3,$ and
$t(\Giv,w)$ in terms of $\wc_1$,
\begin{align}
\wb_2 &= \frac{1-\alpha}{\wc_1}, \label{Eq_G4_bc}\\
\wa_1 &= \frac{\gamma\wc_1}{1-\alpha}, \label{Eq_G4_ac}\\
\wc_3 &= \frac{(1-\alpha)(1-\beta)}{\gamma\wc_1}, \notag\\
t(\Giv,w) &= \gamma-\gamma\wc_1-\frac{(1-\alpha)(1-\beta)}{\wc_1}.
\label{Eq_G4_tc}
\end{align}

From (\ref{Eq_G4_tc}) we can deduce that $t(\Giv,w)$ will be
minimized when $\wc_1$ is as large or as small as possible, because
the second derivative with respect to $\wc_1$ is negative. Since
$\wb_2\leq 1$ and $\wa_1\leq 1$, (\ref{Eq_G4_bc}) and
(\ref{Eq_G4_ac}) imply that $\wc_1\in [1-\alpha,(1-\alpha)/\gamma]$.

Substituting $\wc_1=1-\alpha$ into (\ref{Eq_G4_tc}) gives
$t(\Giv,w)=\alpha\gamma+\beta-1$. Substituting
$\wc_1=(1-\alpha)/\gamma$ into (\ref{Eq_G4_tc}) gives
$t(\Giv,w)=\beta\gamma+\alpha-1$. Hence for
$\wc_1\in[1-\alpha,(1-\alpha)/\gamma]$ we have
\[
t(\Giv,w)\geq \min\{\alpha\gamma+\beta-1,\beta\gamma+\alpha-1\}.
\]
Lemma \ref{CyclicIneq_NotExtremal} therefore tells us that
$(\Giv,w)$ can not be extremal.
\end{proof}

\begin{lemma}\label{G5_NotExtremal}
If $(\abg)\in R_2$ then for all weightings $w$ such that
$(\Gv,w)\in\Tri(\abg)$, $(\Gv,w)$ is either not extremal or not
vertex minimal.
\end{lemma}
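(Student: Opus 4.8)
The plan is to follow the template used for $\Gi$ through $\Giv$. Assume for contradiction that $(\Gv,w)\in\Tri(\abg)$ is both extremal and vertex minimal; then $w(v)>0$ for every $v\in V(\Gv)$, and by Lemma \ref{abgInR3_implies_Not01}(ii) we have $0<\abg<1$. Reading the edge and triangle densities of $\Gv$ off Figure \ref{Fig_Program_Output} and using that the weights in each vertex class sum to $1$, I would express $\alpha,\beta,\gamma$ and $t(\Gv,w)$ as explicit polynomials (or rational functions, after solving some of the density relations) in the remaining independent vertex weights. By Lemma \ref{CyclicIneq_NotExtremal} we may assume
\[
t(\Gv,w)<\min\{\alpha\beta+\gamma-1,\ \alpha\gamma+\beta-1,\ \beta\gamma+\alpha-1\},
\]
as otherwise $(\Gv,w)$ is not extremal.

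First I would attempt the most direct route, exactly as in Lemma \ref{F6_NotExtremal} and the cases $\Gi,\Gii$: substitute the density expressions into each of the three inequalities $t(\Gv,w)<\alpha\beta+\gamma-1$, $t(\Gv,w)<\alpha\gamma+\beta-1$, $t(\Gv,w)<\beta\gamma+\alpha-1$, clear denominators and rearrange, hoping that at least one of the resulting polynomial inequalities collapses to ``(sum of manifestly non-negative terms)$\,<0$'', which is impossible since every vertex weight and its complement lies in $[0,1]$. For graphs of this size it is also plausible that one can instead solve the density equations for all weights in terms of a single parameter (as in the $\Giv$ argument, where everything was written in terms of $\wc_1$) and then check that the resulting one-variable expression for $t(\Gv,w)$ is at least one of the three bounds on the relevant parameter interval.

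If the naive substitution fails to close every case, the fallback is to use vertex minimality through the structural lemmas already proved. Concretely, I would search $\Gv$ for not-necessarily-distinct vertices $a_0,a_1\in A$, $b_0,b_1\in B$ with $a_1b_1\in E(\Gv)$, $a_0b_0\notin E(\Gv)$: if $C_{a_0b_0}$ is a proper subset of $C_{a_1b_1}$ then Corollary \ref{Order_subset} finishes immediately, while if $C_{a_0b_0}=C_{a_1b_1}$ one can shift weight between these edges as in Lemma \ref{F7_NotExtremal} or invoke Lemma \ref{ReplaceBy8}. Alternatively, if $\Gv$ has two vertices in some class with the same $C$-neighbourhood, Lemma \ref{CollapseClass} or Lemma \ref{OppositeClassSizeIs3} applies; and if pushing all the weight off a single vertex of $\Gv$ and deleting it produces a graph strongly-isomorphic to one already handled --- for instance $F_6$, $H_6$, or one of $\Gi,\ldots,\Giv$ --- then (using $\Reduce$ on the possibly-oversized opposite class if needed) that earlier result transfers.

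The step I expect to be the main obstacle is the bookkeeping in the direct approach: after removing the redundant weights there are likely to be three or four free parameters, the density relations may determine some of them only implicitly (as with the quadratic in $\wa_1$ behind Lemma \ref{6Vertex_ValidRegion}), and a genuine case split --- say on the sign of a linear factor such as $1-2\wc_1$, exactly as in Lemma \ref{F7_NotExtremal} --- may be unavoidable, possibly together with an auxiliary inequality like $\wc_1\le\wc_2$ extracted from Lemma \ref{Order_Sset}. Checking that every branch of this split ends in a contradiction, or else in an applicable structural reduction, is where the real work of the proof sits.
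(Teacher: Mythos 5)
Your proposal is a menu of candidate strategies rather than a proof: it never commits to, or executes, the argument that actually disposes of $\Gv$. The paper's proof is a short, specific reduction. In $\Gv$ there are two vertices $a_1,a_2\in A$ with $\Gamma_B(a_1)=\Gamma_B(a_2)$ (identical neighbourhoods in $B$ but not in $C$). Applying $\Merge$ to $a_1,a_2$ produces a single partial edge $ac_2$; applying $\Split$ to that edge, replacing $c_2$, yields a weighted graph with exactly the same edge and triangle densities, the same total number of vertices, and strongly-isomorphic to $\Giv$. Lemma \ref{G4_NotExtremal} then transfers verbatim. No density computation for $\Gv$ itself and no case split is needed.

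None of the routes you describe actually lands on this. Your lead strategy (substituting the density polynomials into the three inequalities of Lemma \ref{CyclicIneq_NotExtremal}) is not what the paper does for $\Gv$ and you give no reason to believe any of the three inequalities collapses to a sum of non-negative terms here. Your structural fallbacks are also off target: Lemmas \ref{CollapseClass} and \ref{OppositeClassSizeIs3} require equal \emph{$C$}-neighbourhoods (and, for the latter, $|B|=3$), whereas the coincidence in $\Gv$ is of $B$-neighbourhoods with $|C|=2$ after the reduction; and your closest gesture at the right idea --- ``pushing all the weight off a single vertex of $\Gv$ and deleting it'' to reach an earlier graph --- is a different and invalid mechanism, since shifting all of a vertex's weight onto a non-clone changes the edge densities (as the $\gamma'$ and $t(F_7,w')$ corrections in Lemma \ref{F7_NotExtremal} show), so the earlier result would not apply to the same $(\abg)$. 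The missing idea is precisely the density-preserving $\Merge$-then-$\Split$ reduction from $\Gv$ to $\Giv$ exploiting $\Gamma_B(a_1)=\Gamma_B(a_2)$, together with the observation that the vertex count is unchanged so both ``not extremal'' and ``not vertex minimal'' transfer.
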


\begin{proof}
Suppose $(\abg)\in R_2$ and $(\Gv,w)\in\Tri(\abg)$. We will show
that there exists a weighting $w'$ of $\Giv$ such that
$(\Giv,w')\in\Tri(\abg)$ and $t(\Giv,w')=t(\Gv,w)$. Since
$|V(\Giv)|=|V(\Gv)|$, Lemma \ref{G4_NotExtremal} implies that
$(\Gv,w)$ is either not extremal or not vertex minimal.

Suppose $(\Gv,w)$ is vertex minimal, in which case we may assume
$w(v)>0$ for all $v\in V(\Gv)$. To prove there exists $(\Giv,w')$
with the same densities as $(\Gv,w)$, note that
$\Gamma_B(a_1)=\Gamma_B(a_2)$ in $\Gv$. Hence we can modify $\Gv$ by
applying $\Merge$ on $a_1, a_2$ labelling the resulting merged
vertex by $a$. This creates one partial edge $ac_2$. Apply $\Split$
on this edge, to remove it, choosing to replace the vertex $c_2$.
The resulting weighted tripartite graph has the same densities as
$(\Gv,w)$ and it is easy to check that it is strongly-isomorphic to
$\Giv$.
\end{proof}

\begin{lemma}\label{G6_NotExtremal}
If $(\abg)\in R_2$ then for all weightings $w$ such that
$(\Gvi,w)\in\Tri(\abg)$, $(\Gvi,w)$ is either not extremal or not
vertex minimal.
\end{lemma}

\begin{proof}
Suppose $(\abg)\in R_2$ and $(\Gvi,w)\in\Tri(\abg)$. We will show
that there exists a weighting $w'$ of $\Gv$ such that
$(\Gv,w')\in\Tri(\abg)$ and $t(\Gv,w')=t(\Gvi,w)$. Since
$|V(\Gv)|=|V(\Gvi)|$, Lemma \ref{G5_NotExtremal} implies that
$(\Gvi,w)$ is either not extremal or not vertex minimal.

Suppose $(\Gvi,w)$ is vertex minimal, in which case we may assume
$w(v)>0$ for all $v\in V(\Gvi)$. To prove there exists $(\Gv,w')$
with the same densities as $(\Gvi,w)$, note that
$\Gamma_C(b_2)=\Gamma_C(b_3)$ in $\Gvi$. Hence we can modify $\Gvi$
by applying $\Merge$ on $b_2, b_3$ labelling the resulting merged
vertex $b$. This creates one partial edge $a_2b$. Apply $\Split$ on
that edge, to remove it, choosing to replace the vertex $a_2$. The
resulting weighted tripartite graph has the same densities as
$(\Gvi,w)$ and it is easy to check that it is strongly-isomorphic to
$\Gv$.
\end{proof}

\begin{lemma}
For $(\abg)\in R_2$ there exist no weightings $w$ of $\Gix$ such
that $(\Gix,w)\in\Tri(\abg)$.
\end{lemma}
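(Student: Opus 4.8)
The plan is to mimic the argument used for $\Giii$ (which was strongly-isomorphic to $H_6$) and, failing a quick structural reduction, to compute directly the set of edge-density triples for which a weighting of $\Gix$ exists and show this set is disjoint from $R_2$. First I would check whether $\Gix$ is strongly-isomorphic to $H_6$ or to one of the other graphs already shown to admit no weighting in $R_2$; if so the lemma is immediate from the corresponding earlier result, so this is worth ruling in or out at the outset. Assuming no such shortcut, I would fix the canonical labelling of $\Gix$ from Figure~\ref{Fig_Canonical}, read its edges off its tripartite complement in Figure~\ref{Fig_Program_Output}, and use $\sum_{a\in A}w(a)=\sum_{b\in B}w(b)=\sum_{c\in C}w(c)=1$ to eliminate one weight per class, thereby writing $\alpha$, $\beta$, $\gamma$ as polynomials in the remaining free weights.

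Next I would solve this system in the style of the proof of Lemma~\ref{6Vertex_ValidRegion}. I expect that rearranging two of the three edge-density equations lets one express all but one of the free weights as rational functions of a single parameter, and that substituting into the third equation produces a quadratic in that parameter whose discriminant is a positive multiple of $\Delta(\abg)$ (or some other quantity that is non-negative precisely on $R_1$). Since a weighting requires the free weights to be real numbers in $[0,1]$, in particular real, this forces $\Delta(\abg)\geq 0$, i.e.\ $(\abg)\in R_1$. As $R_2=R\setminus R_1$, no weighting $w$ of $\Gix$ with $(\Gix,w)\in\Tri(\abg)$ can exist for $(\abg)\in R_2$, which is the claim. (If the relevant quadratic is only conditionally real — for instance if its leading coefficient can vanish — one falls back on Lemma~\ref{abgInR3_implies_Not01}(ii), which gives $0<\abg<1$ throughout $R_2$ and should rule out such degenerate boundary behaviour.)

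The main obstacle I anticipate is purely bookkeeping: correctly transcribing the adjacencies of $\Gix$ and then pushing the polynomial elimination far enough to recognise the emergent discriminant as $\Delta(\abg)$, rather than as some less transparent expression that still happens to be non-negative exactly off $R_2$. A secondary point of care is that $\Gix$ may have more than six vertices and hence more than one truly free parameter after the class-sum constraints, in which case I would need to argue that along each feasible one-parameter family the same discriminant obstruction reappears; the structure of $\Gix$ (having been produced by the search after all the collapsing lemmas) should keep the number of independent parameters small enough for this to remain a short computation.
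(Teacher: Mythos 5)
Your overall strategy---show that any weighting of $\Gix$ forces $\Delta(\abg)\geq 0$---is the right one, and the preliminary checks you propose (strong-isomorphism to $H_6$, degenerate weights handled via Lemma \ref{abgInR3_implies_Not01}(ii)) do appear at the start of the paper's argument. But there is a genuine gap in the main step. $\Gix$ has seven vertices (classes of sizes $2,2,3$), so after the class-sum constraints there are four free weights and only three density equations: for fixed $(\abg)$ the feasible weightings form a one-parameter family, and the elimination you describe does \emph{not} terminate in a single quadratic whose discriminant is a multiple of $\Delta(\abg)$. What actually happens (and what the paper exploits) is that merging $c_2$ into $c_1$ turns any weighting $w$ of $\Gix$ into a weighting $w''$ of $H_6$ with densities $(\alpha-\epsilon,\beta,\gamma)$, where $\epsilon=w(b_2)w(c_2)>0$ depends on $w$. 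Lemma \ref{6Vertex_ValidRegion} then yields $\Delta(\alpha-\epsilon,\beta,\gamma)\geq 0$, which is the ``discriminant obstruction on each slice'' you allude to---but it is an obstruction at a \emph{perturbed} point, not at $(\abg)$ itself.

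Bridging that last step is the real content of the proof and is missing from your sketch: $\Delta$ is not monotone in $\alpha$, so $\Delta(\alpha-\epsilon,\beta,\gamma)\geq 0$ does not by itself give $\Delta(\abg)\geq 0$. Writing $\alpha'=\alpha-\epsilon$, the paper expands
\[
\Delta(\abg)=\Delta(\alpha',\beta,\gamma)+\epsilon^2+2\epsilon\bigl(\alpha'+\beta+\gamma-2\bigr)+4\epsilon(1-\beta)(1-\gamma)
\]
and identifies the potentially negative cross term $\alpha'+\beta+\gamma-2$ as $t(H_6,w'')$ via Lemma \ref{6Vertex_IsBest}, which is strictly positive because all vertices of $\Gix$ may be assumed to carry positive weight (the zero-weight cases having been disposed of separately, and the positivity also feeding into Theorem \ref{Thm_T=0} to place $(\alpha',\beta,\gamma)$ in $R$ before Lemma \ref{6Vertex_ValidRegion} can be invoked). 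Without this identification---or some substitute argument---your plan stalls at $\Delta(\alpha-\epsilon,\beta,\gamma)\geq 0$ with a weighting-dependent $\epsilon$, and the expected conclusion does not follow.
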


\begin{proof}
Suppose $(\Gix,w)\in\Tri(\abg)$ for $(\abg)\in R_2$. If $w(c_2)=0$
then removing $c_2$ leaves $\Gix$ strongly-isomorphic to $H_6$.
Hence we get a contradiction from Lemma \ref{6Vertex_ValidRegion}.
If $w(c_1)=0$ or $w(b_2)=0$ then $\alpha=1$, and
$(1,\beta,\gamma)\notin R_2$ by Lemma \ref{abgInR3_implies_Not01}
(ii). Similarly we can show all other vertices must have a non-zero
weight. We will get a contradiction by showing that
$\Delta(\abg)\geq 0$ and hence $(\abg)\notin R_2$.

Consider a new weighting $w'$ given by $w'(v)=w(v)$ for all $v\in
V(\Gix)\setminus\{c_1,c_2\}$, $w'(c_1)=w(c_1)+w(c_2)$, and
$w'(c_2)=0$. For convenience let us write $\alpha'=\alpha(\Gix,w')$
(note that $\beta(\Gix,w')=\beta$ and $\gamma(\Gix,w')=\gamma$).
Since $w'(c_2)=0$ we could remove it from $\Gix$ without changing
any densities and the resulting weighted tripartite graph would be
strongly-isomorphic to $H_6$, let $w''$ be the corresponding
weighting. Since $w(v)\neq 0$ for all $v\in V(\Gix)$ we know
$w''(v)\neq 0$ for all $v\in V(H_6)$, and consequently $t(H_6,w'')>
0$. Lemma \ref{6Vertex_IsBest} tells us that
$T_{\min}(\alpha',\beta,\gamma)=t(H_6,w'')> 0$, therefore by Theorem
\ref{Thm_T=0} we have $(\alpha',\beta,\gamma)\in R$. Moreover, Lemma
\ref{6Vertex_ValidRegion} implies that
$\Delta(\alpha',\beta,\gamma)\geq 0$.

Since $\alpha'=1-w'(b_2)w'(c_1)=1-w(b_2)w(c_1)-w(b_2)w(c_2)$, we
have
\[
\alpha' = \alpha-w(b_2)w(c_2).
\]
Hence we can write $\alpha = \alpha'+\epsilon$, where
$\epsilon=w(b_2)w(c_2)>0$. Consider
\begin{align*}
\Delta(\abg) &= \Delta(\alpha'+\epsilon,\beta,\gamma)\\
&= \Delta(\alpha',\beta,\gamma) + 2\epsilon\alpha' + \epsilon^2 - 2\epsilon\beta - 2\epsilon\gamma + 4\epsilon\beta\gamma\\
&= \Delta(\alpha',\beta,\gamma) + \epsilon^2 + 2\epsilon(\alpha'+\beta+\gamma-2) + 4\epsilon(1-\beta)(1-\gamma)\\
&=\Delta(\alpha',\beta,\gamma) + \epsilon^2 + 2\epsilon t(H_6,w'') +
4\epsilon(1-\beta)(1-\gamma).
\end{align*}
Since each term is non-negative we have
$\Delta(\alpha,\beta,\gamma)\geq 0$. Therefore $(\abg)\notin R_2$, a
contradiction.
\end{proof}

\begin{lemma}\label{G10_NotExtremal}
For all weightings $w$ such that $(\Gx,w)\in\Tri$, $(\Gx,w)$ is
either not extremal or not vertex minimal.
\end{lemma}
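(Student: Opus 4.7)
The plan is to follow the same reduction strategy used in Lemmas \ref{G5_NotExtremal} and \ref{G6_NotExtremal}: I would identify in $\Gx$ a pair of vertices in one class with identical neighbourhood in one of the other two classes, apply $\Merge$ to collapse them, and then apply $\Split$ on the (single) partial edge introduced, choosing to split the vertex in the third class rather than the merged vertex. By Lemma \ref{SplitDensity} and the density-preservation remark following the $\Merge$ algorithm, the resulting $(\Gx',w')\in\Tri$ sits in the same $\Tri(\abg)$ with the same triangle density, and has $|V(\Gx')|=|V(\Gx)|$. If $\Gx'$ is strongly-isomorphic to one of the graphs already eliminated, namely $\Gi,\ldots,\Gvi$ or $\Gix$, then by the corresponding lemma $(\Gx',w')$ is either not extremal or not vertex minimal, and hence so is $(\Gx,w)$.

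Before carrying out this reduction, I would first assume vertex minimality so that $w(v)>0$ for every $v\in V(\Gx)$, and use Lemma \ref{abgInR3_implies_Not01}(ii) to obtain $0<\abg<1$. By Lemma \ref{CollapsePairOfVertices} I may further assume that no two vertices in the same class share the \emph{full} neighbourhood; in particular, the candidate pair $x_1,x_2$ with $\Gamma_Y(x_1)=\Gamma_Y(x_2)$ must differ in class $Z$, so the $\Merge$ produces exactly one partial edge between the merged vertex and some $z\in Z$. Splitting $z$ (rather than the merged vertex) leaves the reduced class size intact and yields an honest weighted tripartite graph of the same order as $\Gx$.

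The key step, which is essentially combinatorial bookkeeping on the canonical labelling of Figure \ref{Fig_Canonical}, is to verify that the graph produced by the chosen $\Merge$+$\Split$ is indeed strongly-isomorphic to one of the previously handled $G_i$'s. If the inspection of $\Gx$ does not reveal a pair with identical neighbourhood across some class, the fallback is to find an edge $a_1b_1$ and a (possibly coincident) non-edge $a_0b_0$ with $C_{a_0b_0}\subsetneq C_{a_1b_1}$ and invoke Corollary \ref{Order_subset} directly. As a final fallback, one can compute $\alpha,\beta,\gamma$ and $t(\Gx,w)$ as explicit polynomials in the vertex weights and show, as in Lemma \ref{G4_NotExtremal}, that $t(\Gx,w)\geq\min\{\alpha\beta+\gamma-1,\alpha\gamma+\beta-1,\beta\gamma+\alpha-1\}$, so that Lemma \ref{CyclicIneq_NotExtremal} applies.

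The main obstacle I anticipate is not the reduction mechanics, which are by now routine, but the correct choice of merge pair: one has to pick the pair so that the post-$\Split$ graph lands on a previously eliminated case rather than on a new adjacency pattern. Given the successful precedent for $\Gv$ and $\Gvi$, I expect a single $\Merge$ followed by a single $\Split$ to reduce $\Gx$ to (a graph strongly-isomorphic to) $\Gv$ or $\Gvi$, at which point Lemma \ref{G5_NotExtremal} or Lemma \ref{G6_NotExtremal} finishes the argument.
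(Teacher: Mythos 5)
Your overall mechanism (assume extremality and vertex minimality, $\Merge$ a pair with a common neighbourhood in one class, $\Split$ the single resulting partial edge by replacing the vertex in the third class) is indeed how the paper starts, but your intended conclusion cannot work and the step you leave as ``combinatorial bookkeeping'' is exactly where the argument breaks. The $\Merge$-then-$\Split$ operation preserves the total number of vertices, and $\Gx$ has $8$ vertices ($|A|=|B|=3$, $|C|=2$), whereas every graph on your list of candidate landing spots --- $\Gi,\Gii,\Giii$ (six vertices) and $\Giv,\Gv,\Gvi,\Gix$ (seven vertices) --- has strictly fewer. So the transformed graph $\Gx'$ cannot be strongly-isomorphic to $\Gv$ or $\Gvi$ (or any of the others), and your primary route dead-ends. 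Your fallbacks do not rescue it either: Corollary \ref{Order_subset} is one of the criteria the computer search has already applied, so $\Gx$ by construction contains no pair with $C_{a_0b_0}\subsetneq C_{a_1b_1}$ of the required type; and the proposed inequality $t(\Gx,w)\geq\min\{\alpha\beta+\gamma-1,\alpha\gamma+\beta-1,\beta\gamma+\alpha-1\}$ is neither computed nor known to hold.

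The missing idea is what to do with $\Gx'$ once you have it. In the paper one merges $b_2,b_3$ (creating the single partial edge $bc_2$) and splits $c_2$, producing an $8$-vertex graph $\Gx'$ with $|B|=2$ and $|C|=3$ in which the pair $a_1,a_3\in A$ still satisfies $\Gamma_B(a_1)=\Gamma_B(a_3)$. One then invokes Lemma \ref{OppositeClassSizeIs3} (with the roles of $B$ and $C$ interchanged: a pair in $A$ agreeing on $B$, with the opposite class $C$ now of size $3$) to conclude that $(\Gx',w')$ is not extremal or not vertex minimal, and hence neither is $(\Gx,w)$ since $|V(\Gx')|=|V(\Gx)|$. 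That is, the point of the $\Merge$/$\Split$ is not to land on a previously eliminated graph but to inflate the class opposite a duplicated neighbourhood to size $3$ so that Lemma \ref{OppositeClassSizeIs3} becomes applicable; your proposal never makes this move.
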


\begin{proof}
Suppose $(\Gx,w)$ is extremal and vertex minimal, hence $w(v)\neq 0$
for all $v\in V(\Gx)$. Convert $(\Gx,w)$ into a doubly-weighted
tripartite graph by adding the function $p$ which maps all edges to
$1$. Applying $\Merge$ on $(\Gx,w,p)$ and $b_2,b_3$, results in only
one partial edge being created $bc_2$ (where $b$ is the vertex
replacing $b_2,b_3$). We can apply $\Split$ on that edge choosing to
replace the vertex $c_2$, and then revert back to a weighted graph
$(\Gx',w')$ say. Now $(\Gx',w')$ has the same densities as $(\Gx,w)$
but $\Gx'$ has $|B|=2$ and $|C|=3$. Moreover, $\Gx$ has the property
that $\Gamma_B(a_1)=\Gamma_B(a_3)$, and this is also true in $\Gx'$.
Hence applying Lemma \ref{OppositeClassSizeIs3} to $(\Gx',w')$ and
$a_1,a_3$, we see that $(\Gx',w')$ is not extremal or not vertex
minimal. Since $|V(\Gx')|=|V(\Gx)|$ the same is true of $(\Gx,w)$.
\end{proof}

\begin{lemma}
For all weightings $w$ such that $(\Gxi,w)\in\Tri$, $(\Gxi,w)$ is
either not extremal or not vertex minimal.
\end{lemma}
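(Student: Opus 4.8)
The plan is to reuse the reduction scheme from the proofs of Lemmas \ref{G5_NotExtremal}, \ref{G6_NotExtremal} and \ref{G10_NotExtremal}: assuming for contradiction that $(\Gxi,w)$ is extremal and vertex minimal, pass to a doubly-weighted graph by adjoining $p\equiv 1$, collapse a suitable pair of vertices with $\Merge$, remove the single partial edge this creates with $\Split$, and revert to a weighted tripartite graph with the same edge and triangle densities and the same number of vertices; the resulting graph is then recognised either as one of the graphs $\Gi,\dots,\Gx$ already treated or as a graph to which Lemma \ref{OppositeClassSizeIs3} applies.

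In detail, vertex minimality of $(\Gxi,w)$ forces $w(v)>0$ for every vertex, so after adjoining the constant function $p\equiv 1$ the hypotheses of $\Merge$ are available. Inspecting $\Gxi$ in Figure \ref{Fig_Program_Output} one locates two vertices lying in a common class — say $a_1,a_3\in A$ — whose neighbourhoods into one of the other two classes coincide (say $\Gamma_C(a_1)=\Gamma_C(a_3)$), so that $\Merge$ fuses them into a single vertex $a$ and, because their neighbourhoods into the remaining class differ in exactly one vertex $b_j$, produces exactly one partial edge $ab_j$. Applying $\Split$ to $ab_j$, choosing to split $b_j$ rather than $a$ so that class $A$ keeps its reduced size, and then reverting to a weighted graph, we obtain $(\Gxi',w')$ with the same densities as $(\Gxi,w)$ and $|V(\Gxi')|=|V(\Gxi)|$.

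It remains to identify $(\Gxi',w')$. Inspection of the figure shows that $\Gxi'$ is strongly-isomorphic to a graph already eliminated — for instance $\Gx$, whose treatment in Lemma \ref{G10_NotExtremal} holds for all weightings in $\Tri$ — or else $\Gxi'$ still contains a pair of vertices in some class whose neighbourhoods into a class of size three agree, so that Lemma \ref{OppositeClassSizeIs3} shows $(\Gxi',w')$ is not extremal or not vertex minimal. In either case, since the Merge–Split reduction changed neither the densities nor the vertex count, the same conclusion transfers back to $(\Gxi,w)$.

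The only non-routine ingredient is reading off Figure \ref{Fig_Program_Output} correctly: one must check that the chosen pair of vertices genuinely has matching neighbourhoods into the claimed class, that $\Merge$ produces precisely one partial edge (so that a single $\Split$ suffices), and that the graph left after $\Split$ is strongly-isomorphic to the expected one. If the first choice of pair or of merging class does not lead to an already-eliminated graph, one simply tries another, or iterates Merge–Split as in the proof of Lemma \ref{ClassSize1_NotExtremal}; no new idea is required, only a finite amount of bookkeeping.
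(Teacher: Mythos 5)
Your strategy is exactly the paper's: the published proof of this lemma simply says it is ``almost identical'' to that of Lemma \ref{G10_NotExtremal} --- convert to a doubly-weighted graph, $\Merge$ the pair $b_2,b_3\in B$ (which have equal neighbourhoods in $A$), remove the single resulting partial edge into $C$ with one application of $\Split$ that enlarges $C$ to three vertices, and then invoke Lemma \ref{OppositeClassSizeIs3}, here using $\Gamma_B(a_1)=\Gamma_B(a_2)$. The only substantive content you leave open is the figure-dependent bookkeeping (your guessed pair $a_1,a_3\in A$ with matching $C$-neighbourhoods is not the one used; the merge happens in $B$, and the direct appeal to Lemma \ref{OppositeClassSizeIs3} suffices, so your fallback of recognising the output as an already-eliminated graph such as $\Gx$ is not needed); you flag this explicitly and it checks out, so the argument is correct and essentially identical to the paper's.
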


\begin{proof}
The proof is almost identical to that of Lemma
\ref{G10_NotExtremal}. The only difference being at the end, where
now we have $\Gamma_B(a_1)=\Gamma_B(a_2)$ holding true, and so we
apply Lemma \ref{OppositeClassSizeIs3} to vertices $a_1$ and $a_2$
instead.
\end{proof}

\begin{lemma}
For all weightings $w$ such that $(\Gxii,w)\in\Tri$, $(\Gxii,w)$ is
either not extremal or not vertex minimal.
\end{lemma}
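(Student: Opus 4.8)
The plan is to follow the template already used to eliminate $\Gx$ and $\Gxi$: collapse two vertices of $\Gxii$ whose neighbourhoods into one class coincide, remove the single partial edge this creates, and then appeal to Lemma \ref{OppositeClassSizeIs3} for a graph on the same number of vertices. So I would first assume, for contradiction, that $(\Gxii,w)$ is extremal and vertex minimal; then $w(v)>0$ for every $v\in V(\Gxii)$, and I may pass to the doubly-weighted graph $(\Gxii,w,p)$ with $p(e)=1$ on all edges.

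Next I would read off from Figure \ref{Fig_Program_Output} a pair of vertices in some class, say $b_i,b_j\in B$, whose neighbourhoods into one of the other two classes agree (such a partial coincidence is exactly the hook the computer search leaves us, since Lemma \ref{CollapsePairOfVertices} has already removed every graph with two vertices having identical \emph{full} neighbourhoods). Because $p\equiv 1$ on the relevant edges, $\Merge$ applies to $b_i,b_j$; the only partial edges it can create lie in the one class where $b_i$ and $b_j$ disagree, and from the picture this should be a single partial edge. I would delete that partial edge with one application of $\Split$, choosing to split the vertex of the third class rather than the merged vertex, and then revert to a weighted graph $(\Gxii',w')$. By the defining property of $\Merge$ and Lemma \ref{SplitDensity}, $(\Gxii',w')$ has the same edge and triangle densities as $(\Gxii,w)$ and $|V(\Gxii')|=|V(\Gxii)|$, but now one class has only two vertices.

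Finally, $\Gxii$ should contain two vertices of $A$ whose neighbourhoods into the class that ends up with two vertices coincide, and this property is preserved by the $\Merge$/$\Split$ surgery; then the remaining class of $\Gxii'$ has size three, so Lemma \ref{OppositeClassSizeIs3} (applied with the two non-$A$ classes relabelled as needed) shows $(\Gxii',w')$ is not extremal or not vertex minimal, whence the same holds for $(\Gxii,w)$. If instead the figure makes $\Gxii$ collapse under $\Merge$/$\Split$ onto $\Giv$ or $\Gv$, I would conclude directly from Lemma \ref{G4_NotExtremal} or Lemma \ref{G5_NotExtremal} as in the proof of Lemma \ref{G6_NotExtremal}; and, failing any neighbourhood coincidence, I would compute the edge and triangle densities of $\Gxii$ in terms of its free vertex weights and contradict Lemma \ref{CyclicIneq_NotExtremal}, exactly as for $\Gii$.

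The main obstacle is not conceptual but careful bookkeeping from the figure: one must check that the chosen pair of vertices really does have coincident neighbourhoods in one class, that $\Merge$ produces exactly one partial edge, that splitting in the correct direction leaves precisely the class configuration (one class of size two, another of size three) that Lemma \ref{OppositeClassSizeIs3} requires, and that the repeated neighbourhood needed for that lemma is present both before and after the reduction.
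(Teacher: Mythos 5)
There is a genuine gap: your primary route cannot even begin on $\Gxii$. Reading the edge set off the paper's density formulas, $\alpha=\wb_1\wc_2+\wb_2\wc_1+\wc_3$, $\beta=\wa_1\wc_2+\wa_2\wc_1+\wa_3$, $\gamma=\wa_1\wb_2+\wa_2\wb_1+\wb_3$, the graph $\Gxii$ consists of a $6$-cycle on $a_1,b_2,c_1,a_2,b_1,c_2$ together with three ``hub'' vertices $a_3,b_3,c_3$ forming its unique triangle, each hub joined to all of one other class. Checking all six one-class neighbourhoods in each class ($\Gamma_B(a_1)=\{b_2,b_3\}$, $\Gamma_B(a_2)=\{b_1,b_3\}$, $\Gamma_B(a_3)=\{b_3\}$, etc.) shows that \emph{no} two vertices of any class have coincident neighbourhoods into a single other class. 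This is no accident: $\Gxii$ has all three classes of size $3$, so any such coincidence would already have been eliminated by Lemma \ref{OppositeClassSizeIs3}, which the computer search applies; the $\Merge$/$\Split$ hook only survives for the outputs with a class of size $2$ (as in $\Gx$, $\Gxi$). Hence your main plan and your first fallback (collapsing onto $\Giv$ or $\Gv$, which also needs $\Merge$) are both unavailable, and your second fallback is left carrying the whole proof. That fallback --- ``compute the densities and contradict Lemma \ref{CyclicIneq_NotExtremal} exactly as for $\Gii$'' --- is not a proof: for $\Gxii$ there are six free weights, no single cyclic expression $\alpha\beta+\gamma-1$, $\alpha\gamma+\beta-1$, $\beta\gamma+\alpha-1$ lies below $t(\Gxii,w)$ for all weightings (for instance with $\wc_3$ close to $1$ one has $t<\beta\gamma+\alpha-1$), so at best a case analysis over which expression to use would be needed, and you give no argument that such an inequality holds on $R_2$ at all.

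The paper's actual argument is a local perturbation specific to this structure. After noting that of $\wa_1\le\wa_2$, $\wb_1\le\wb_2$, $\wc_1\le\wc_2$ at least two hold or at least two fail (WLOG $\wa_1\le\wa_2$ and $\wb_1\le\wb_2$), one shifts a small weight $\delta$ from $c_2$ to $c_1$; this raises $\alpha$ and $\beta$, and these are restored by setting $p(b_3c_3)=1-\delta(\wb_2-\wb_1)/\wb_3\wc_3$ and $p(a_3c_3)=1-\delta(\wa_2-\wa_1)/\wa_3\wc_3$ in the doubly-weighted graph, which strictly shrinks the weight of the unique triangle $a_3b_3c_3$ unless $\wa_1=\wa_2$ and $\wb_1=\wb_2$; in that degenerate case merging $c_1$ and $c_2$ preserves all densities and contradicts vertex minimality. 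Lemma \ref{DoublyWeighted_SameAs_Weighted} then converts back to a weighted graph, showing $(\Gxii,w)$ is not extremal. If you want to salvage your write-up, this weight-shift-plus-partial-edge compensation (an application in the spirit of Lemma \ref{Order_Sset}, but with two compensating edges) is the missing idea; the $\Merge$/$\Split$/Lemma \ref{OppositeClassSizeIs3} template simply does not apply to this graph.
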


\begin{proof}
Suppose $(\Gxii,w)$ is vertex minimal, so $w(v)> 0$ for all $v\in
V(\Gxii)$. Of the three statements $\wa_1\leq\wa_2$,
$\wb_1\leq\wb_2$, $\wc_1\leq\wc_2$, at least two must be true or at
least two must be false. Without loss of generality let us suppose
that $\wa_1\leq\wa_2$, $\wb_1\leq\wb_2$ are both true.

The densities of $(\Gxii,w)$ are given by
\begin{align*}
t(\Gxii,w) &= \wa_3\wb_3\wc_3,\\
\alpha &= \wb_1\wc_2+\wb_2\wc_1+\wc_3,\\
\beta  &= \wa_1\wc_2+\wa_2\wc_1+\wa_3,\\
\gamma &= \wa_1\wb_2+\wa_2\wb_1+\wb_3.
\end{align*}

Consider the doubly-weighted tripartite graph $(\Gxii,w,p)$ where
$p$ maps all edges to $1$. It has the same densities as $(\Gxii,w)$.
If we move a sufficiently small amount of weight $\delta>0$ from
vertex $c_2$ to $c_1$, $\alpha$ and $\beta$ increase. By decreasing
$p(b_3c_3)$ and $p(a_3c_3)$ respectively we can keep all densities
unchanged. More precisely set
\[
p(b_3c_3)=1-\delta(\wb_2-\wb_1)/\wb_3\wc_3,\qquad
p(a_3c_3)=1-\delta(\wa_2-\wa_1)/\wa_3\wc_3.
\]
If $\wa_1=\wa_2$ and $\wb_1=\wb_2$, then increasing the weight of
$c_1$ to $\wc_1+\wc_2$ and removing $c_2$ will result in a weighted
tripartite graph with the same densities as $(\Gxii,w)$ but with
fewer vertices. Hence we know that $p(b_3c_3)<1$ or $p(a_3c_3)<1$.
Consequently we now have a doubly-weighted tripartite graph with the
same edge densities as $(\Gxii,w)$ but a strictly smaller triangle
density. Hence by Lemma \ref{DoublyWeighted_SameAs_Weighted}
$(\Gxii,w)$, is not extremal.

Suppose now that two of the statements $\wa_1\leq\wa_2$,
$\wb_1\leq\wb_2$, $\wc_1\leq\wc_2$, are false, for example
$\wa_1>\wa_2$ and $\wb_1>\wb_2$. We can repeat the above argument,
this time moving weight from $c_1$ to $c_2$, again constructing a
doubly-weighted tripartite graph with the same edge densities but a
smaller triangle density.
\end{proof}

\begin{lemma}\label{G14_NotExtremal}
If $(\abg)\in R_2$ then for all weightings $w$ such that
$(\Gxiv,w)\in\Tri(\abg)$, $(\Gxiv,w)$ is either not extremal or not
vertex minimal.
\end{lemma}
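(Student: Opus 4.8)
The plan is to show, as for the other nine-vertex graphs in the list, that no vertex minimal weighting of $\Gxiv$ can be extremal when $(\abg)\in R_2$. So assume $(\Gxiv,w)\in\Tri(\abg)$ is vertex minimal, whence $w(v)>0$ for all $v\in V(\Gxiv)$, and recall from Lemma \ref{abgInR3_implies_Not01}(ii) that $0<\abg<1$. First I would write $\alpha,\beta,\gamma$ and $t(\Gxiv,w)$ explicitly as polynomials in the free vertex weights (using $\wa_3=1-\wa_1-\wa_2$ and its analogues in $B,C$), and record, for each pair $a_i\in A$, $b_j\in B$, the set $C_{a_ib_j}$; all of this is read straight off the tripartite complement $\Gbar$.

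Several of the earlier tools are worth trying in turn. If some (not necessarily distinct) $a_0,a_1\in A$, $b_0,b_1\in B$ satisfy $a_0b_0\notin E(\Gxiv)$, $a_1b_1\in E(\Gxiv)$ and $C_{a_0b_0}\subsetneq C_{a_1b_1}$, Corollary \ref{Order_subset} finishes at once; if such a pair exists with $C_{a_0b_0}=C_{a_1b_1}$, Lemma \ref{ReplaceBy8} reduces $(\Gxiv,w)$ to one of an explicit family of eight strictly smaller graphs, each already dealt with. If instead $t(\Gxiv,w)$ happens to be expressible in a single free parameter, one can minimise it directly and check $t(\Gxiv,w)\geq\min\{\alpha\beta+\gamma-1,\alpha\gamma+\beta-1,\beta\gamma+\alpha-1\}$, then invoke Lemma \ref{CyclicIneq_NotExtremal}, exactly as in the treatment of $\Giv$. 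Failing all of these, the intended route is the doubly-weighted perturbation used for $\Gxii$: form $(\Gxiv,w,p)$ with $p\equiv 1$, transfer a sufficiently small weight $\delta>0$ between two vertices of one class so as to raise exactly two of the three edge densities, and compensate by lowering $p$ on the one or two triangle edges that meet the third class, the decrements being forced by the requirement that $\alpha,\beta,\gamma$ remain fixed; the net change in $t$ is then $-\delta$ times a strictly positive quantity, and Lemma \ref{DoublyWeighted_SameAs_Weighted} contradicts extremality.

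What remains is the degenerate analysis, which I expect to be the main (though not deep) obstacle. The weight-transfer step needs a strict inequality between two vertex weights in some class, e.g. $\wa_1\neq\wa_2$; when equality holds the two vertices concerned have identical neighbourhoods in the relevant class, so $\Merge$ applies, and since the merged weight is positive we may then either delete a zero-weight vertex outright or use $\Split$ to clear the single partial edge created, in either case obtaining a weighted tripartite graph of strictly smaller order with the same densities. If a class of this graph still has size three we close via Lemma \ref{OppositeClassSizeIs3}; otherwise a $\Reduce$ step followed by deletion of zero-weight vertices contradicts vertex minimality (Lemma \ref{MaxClassSize3}). The only real work is bookkeeping: running through every sign pattern of $\wa_1-\wa_2$, $\wb_1-\wb_2$, $\wc_1-\wc_2$ and checking in each that the compensating $p$-decrements lie in $(0,1]$ and genuinely decrease the triangle density --- a short, mechanical case split of precisely the type already carried out for $\Gxii$.
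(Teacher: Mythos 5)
Your proposal is a contingency plan rather than a proof, and the routes it lists either provably fail for $\Gxiv$ or are not carried out. Recall that the tripartite complement of $\Gxiv$ is three disjoint triangles $a_ib_ic_i$, so the non-edges between $A$ and $B$ are exactly the pairs $a_ib_i$, with $C_{a_ib_i}=C\setminus\{c_i\}$ a two-element set, while every edge $a_ib_j$ ($i\neq j$) has $C_{a_ib_j}=\{c_k\}$ a singleton. Thus for every non-edge/edge pair the containment goes the \emph{wrong} way for Corollary \ref{Order_subset} (it needs $C_{\text{non-edge}}\subsetneq C_{\text{edge}}$), and equality never holds, so Lemma \ref{ReplaceBy8} is also unavailable. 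Single-parameter minimisation is out since $\Gxiv$ has six free weights. That leaves your $\Gxii$-style perturbation, which you do not execute: unlike $\Gxii$, which has a unique triangle $a_3b_3c_3$ whose $p$-values can absorb the compensation, $\Gxiv$ has six triangles and is completely symmetric under simultaneous permutation of indices and of classes, so there is no distinguished edge on which to dump the compensating decrement, and the sign of the net change in $t$ is not controlled by your argument. Moreover your plan never engages with the hypothesis $\Delta(\abg)<0$ beyond $0<\abg<1$, yet this must enter essentially: the paper's contradiction is ultimately that the argument forces $\alpha=\beta=\gamma=3/4$, which satisfies $\Delta=0$ and hence lies outside $R_2$.

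The paper's actual proof requires a genuinely different idea. First, Lemma \ref{Order_Sset} is applied to each non-edge $a_ib_i$ versus a suitable edge (using exactly the ``wrong-way'' containments above) to force $\sum_{c\in C_{a_ib_i}}w(c)\geq\sum_{c\in C_{a_jb_k}}w(c)$, which by symmetry yields $w(v)\leq 1/2$ for every vertex. Then $w'(v)=1-2w(v)$ is a valid weighting of the tripartite complement $\Gbar$, and one computes $\alpha'=3-4\alpha$ (similarly $\beta',\gamma'$) and $t(\Gbar,w')=1-4t(\Gxiv,w)$. Combining the extremality bound $t(\Gxiv,w)\leq 2\sqrt{\alpha\beta(1-\gamma)}+2\gamma-2$ from Lemma \ref{7Vertex_MinDensity} with the trivial bound $t(\Gbar,w')\leq\alpha'$ and squaring produces a sum of non-negative terms that must vanish, forcing $\alpha'=\beta'=\gamma'=0$, i.e.\ $\alpha=\beta=\gamma=3/4$, contradicting $(\abg)\in R_2$. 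None of this complementation machinery is anticipated in your proposal, so the gap is a missing idea, not a matter of bookkeeping.
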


\begin{proof}
Suppose $(\Gxiv,w)$ is extremal and vertex minimal, so $w(v)>0$ for
all $v\in V(\Gxiv)$. Consider the doubly-weighted tripartite graph
$(\Gxiv,w,p)$, where $p$ maps all edges to $1$. Applying Lemma
\ref{Order_Sset} to $(\Gxiv,w,p)$ on the non-edge $a_1b_1$ and the
edge $a_3b_2$ tells us that in order to be extremal
\[
\sum_{c\in C_{a_1b_1}} w(c)\geq \sum_{c\in C_{a_3b_2}} w(c)
\]
must hold. Since $C_{a_1b_1}=\{c_2,c_3\}$ and $C_{a_3b_2}=\{c_1\}$
we must have $\wc_2+\wc_3\geq\wc_1$ or equivalently $1-2\wc_1\geq 0$
(using the fact that $\wc_1+\wc_2+\wc_3=1$). Similarly we can show
that $1-2\wc_2\geq 0$ by looking at $a_2b_2$, $a_1b_3$, and
$1-2\wc_3\geq 0$ by taking $a_3b_3$, $a_2b_1$. By symmetry we must
have $1-2w(v)\geq 0$ for all $v\in V(\Gxiv)$. Note that the function
$w'$ defined by $w'(v)=1-2w(v)$ for all $v\in V(\Gxiv)$ provides a
valid weighting of $\Gxiv$, as $w'(v)\geq 0$ for all $v\in V(\Gxiv)$
and the sum of the weights in a class, $X$ say, is
\begin{align*}
\sum_{v\in X}w'(v) &=  \sum_{v\in X}(1-2w(v))\\
&= |X|-2\sum_{v\in X}w(v)\\
&= |X|-2\\
&=1
\end{align*}
because every class in $\Gxiv$ has size $3$.

Let $\Gbar$ be the tripartite complement of the graph $\Gxiv$.
Consider the weighted tripartite graph $(\Gbar,w')$, with edge
densities
\[
\alpha(\Gbar,w')=\alpha', \quad \beta(\Gbar,w')=\beta', \quad
\gamma(\Gbar,w')=\gamma'.
\]
We can write down $\alpha'$ in terms of $\alpha$.
\begin{align*}
\alpha' &= (1-2\wb_1)(1-2\wc_1) + (1-2\wb_2)(1-2\wc_2) +
(1-2\wb_3)(1-2\wc_3)\\
&= 3 - 2(\wb_1+\wb_2+\wb_3) - 2(\wc_1+\wc_2+\wc_3) +
4(\wb_1\wc_1+\wb_2\wc_2+\wb_3\wc_3)\\
&= 3 - 4(1-\wb_1\wc_1-\wb_2\wc_2-\wb_3\wc_3)\\
&= 3-4\alpha,
\end{align*}
similarly $\beta'=3-4\beta$, and $\gamma'=3-4\gamma$. Next let us
write $t(\Gbar,w')$ in terms of $t(\Gxiv,w)$,
\begin{align*}
t(\Gbar,w') &= (1-2\wa_1)(1-2\wb_1)(1-2\wc_1) + (1-2\wa_2)(1-2\wb_2)(1-2\wc_2)+\\
&\quad\,\, (1-2\wa_3)(1-2\wb_3)(1-2\wc_3)\\
&= 3 - 2(\wa_1+\wa_2+\wa_3) - 2(\wb_1+\wb_2+\wb_3) - 2(\wc_1+\wc_2+\wc_3) +\\
&\quad\,\, 4(\wa_1\wb_1+\wa_1\wc_1+\wb_1\wc_1+\wa_2\wb_2+\wa_2\wc_2+\wb_2\wc_2+\wa_3\wb_3+\wa_3\wc_3+\wb_3\wc_3)-\\
&\quad\,\, 8(\wa_1\wb_1\wc_1+\wa_2\wb_2\wc_2+\wa_3\wb_3\wc_3)\\
&= 1 + 4(\wa_1\wb_1+\wa_1\wc_1+\wb_1\wc_1+\wa_2\wb_2+\wa_2\wc_2+\wb_2\wc_2+\wa_3\wb_3+\wa_3\wc_3+\wb_3\wc_3-\\
&\quad\,\, 2\wa_1\wb_1\wc_1-2\wa_2\wb_2\wc_2-2\wa_3\wb_3\wc_3-\wa_1-\wa_2-\wa_3)\\
&= 1 + 4((1-\wa_1)\wb_1\wc_1 + (1-\wa_2)\wb_2\wc_2 + (1-\wa_3)\wb_3\wc_3-\\
&\quad\,\, \wa_1(1-\wb_1)(1-\wc_1) -\wa_2(1-\wb_2)(1-\wc_2) -\wa_3(1-\wb_3)(1-\wc_3))\\
&= 1+4((\wa_2+\wa_3)\wb_1\wc_1 + (\wa_1+\wa_3)\wb_2\wc_2 + (\wa_1+\wa_2)\wb_3\wc_3-\\
&\quad\,\, \wa_1(\wb_2+\wb_3)(\wc_2+\wc_3) - \wa_2(\wb_1+\wb_3)(\wc_1+\wc_3) - \wa_3(\wb_1+\wb_2)(\wc_1+\wc_2))\\
&= 1+4(-\wa_1\wb_2\wc_3-\wa_1\wb_3\wc_2 - \wa_2\wb_1\wc_3-\wa_2\wb_3\wc_1 - \wa_3\wb_1\wc_2-\wa_3\wb_2\wc_1)\\
&= 1-4t(\Gxiv,w).
\end{align*}
Without loss of generality suppose $\alpha'\leq\beta'\leq\gamma'$.
Since $(\Gxiv,w)$ is extremal by Lemma \ref{7Vertex_MinDensity} we
have
\[
t(\Gxiv,w)\leq 2\sqrt{\alpha\beta(1-\gamma)}+2\gamma-2.
\]
Rewriting in terms of $\alpha',\beta',\gamma',t(\Gbar,w')$ gives
\[
3+2\gamma'-t(\Gbar,w')\leq\sqrt{(3-\alpha')(3-\beta')(1+\gamma')}.
\]
Note that in any weighted tripartite graph the triangle density is
bounded above by all of the edge densities, thus $t(\Gbar,w')\leq
\alpha'$,
and so
\[
3+2\gamma'-\alpha'\leq\sqrt{(3-\alpha')(3-\beta')(1+\gamma')}.
\]
Squaring both sides and rearranging yields
\begin{align*}
\alpha'^2+\gamma'(4\gamma'-\alpha'\beta')+\gamma'(3\beta'-\alpha')+3(\gamma'-\alpha')+\beta'(3-\alpha')\leq
0.
\end{align*}
Each term is non-negative (because
$0\leq\alpha'\leq\beta'\leq\gamma'\leq 1$), and so the only way this
can be true is if $\alpha'=\beta'=\gamma'=0$. Hence
$\alpha=\beta=\gamma=3/4$, but such values do not lie in $R_2$ due
to the fact that $\Delta(3/4,3/4,3/4)=0$. Thus we have a
contradiction and our assumption that $(\Gxiv,w)$ is extremal and
vertex minimal must be false.
\end{proof}

\begin{proof}[Proof of Theorem \ref{Thm_H7H7'H9}] Our computer
search tells us that the only possible extremal and vertex minimal
tripartite graphs are strongly-isomorphic to those given in Figure
\ref{Fig_Program_Output}. Given $(\abg)\in R_2$ for all weightings
$w$, $(\Gi,w)$, $(\Gii,w)$, $(\Giii,w)$, $(\Giv,w)$, $(\Gv,w)$,
$(\Gvi,w)$, $(\Gix,w)$, $(\Gx,w)$, $(\Gxi,w)$, $(\Gxii,w)$,
$(\Gxiv,w)$ are either not extremal, not vertex minimal, or do not
lie in $\Tri(\abg)$ by Lemmas \ref{G1_NotExtremal} to
\ref{G14_NotExtremal} respectively. This just leaves $\Gvii$,
$\Gviii$, and $\Gxiii$ which are strongly-isomorphic to $H_7'$,
$H_7$ and $H_9$ respectively.
\end{proof}

\section{Conjectures}\label{Sec_Conjectures}
The following conjecture, if true, would allow us to write
$T_{\min}(\abg)$ as a simple expression for all values of
$\abg\in[0,1]$.
\begin{conjecture}\label{Conj_Tmin}
For $\gamma\leq\alpha,\beta$,
\[
T_{\min}(\abg) =
\begin{cases}
0, &\text{if $(\abg)\in [0,1]^3\setminus R$},\\
2\sqrt{\alpha\beta(1-\gamma)}+2\gamma-2, &\text{if $(\abg)\in R_2$},\\
\alpha+\beta+\gamma-2, &\text{otherwise}.
\end{cases}
\]
\end{conjecture}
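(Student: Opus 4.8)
The plan is to reduce Conjecture \ref{Conj_Tmin} to the results already established, leaving one routine computation and one apparently hard optimisation. Assume $\gamma\le\alpha,\beta$ throughout. If $(\abg)\in[0,1]^3\setminus R$ then $T_{\min}(\abg)=0$ by Theorem \ref{Thm_T=0}, and if $(\abg)\in R$ with $\Delta(\abg)\ge 0$ (the ``otherwise'' case, since this is exactly $R_1$) then $T_{\min}(\abg)=\alpha+\beta+\gamma-2$ by Theorem \ref{Thm_H6}. So the entire content is the case $(\abg)\in R_2$, where we must prove $T_{\min}(\abg)=2\sqrt{\alpha\beta(1-\gamma)}+2\gamma-2$. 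Here the upper bound is immediate from Lemma \ref{7Vertex_MinDensity} (the natural weighting of $H_7$), so the task is the matching lower bound.

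For the lower bound, first reduce to finitely many graphs: an extremal vertex minimal $(G,w)\in\Tri(\abg)$ exists (as in the well-definedness discussion), and by Theorem \ref{Thm_H7H7'H9} its graph is strongly-isomorphic to $H_7$, $H_7'$ or $H_9$. Writing $m_7(\abg),m_7'(\abg),m_9(\abg)$ for the minimum triangle density over all edge-density-$(\abg)$ weightings of $H_7,H_7',H_9$ (allowing the class permutations permitted by strong-isomorphism), this gives
\[
T_{\min}(\abg)=\min\{m_7(\abg),\,m_7'(\abg),\,m_9(\abg)\},
\]
so it suffices to show each of these is at least $2\sqrt{\alpha\beta(1-\gamma)}+2\gamma-2$ when $(\abg)\in R_2$ and $\gamma\le\alpha,\beta$. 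The key tool, used for all three, is the factorisation obtained by reading $\Delta$ as a quadratic in $\alpha$,
\[
\Delta(\abg)=\Bigl(\alpha-\bigl(\sqrt{\beta(1-\gamma)}+\sqrt{\gamma(1-\beta)}\bigr)^2\Bigr)\Bigl(\alpha-\bigl(\sqrt{\beta(1-\gamma)}-\sqrt{\gamma(1-\beta)}\bigr)^2\Bigr),
\]
which (using $\alpha>0$ from Lemma \ref{abgInR3_implies_Not01}) shows $(\abg)\in R_2$ is equivalent to $|\sqrt{\beta(1-\gamma)}-\sqrt{\gamma(1-\beta)}|<\sqrt{\alpha}<\sqrt{\beta(1-\gamma)}+\sqrt{\gamma(1-\beta)}$. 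Writing $g(x,y,z)=2\sqrt{xy(1-z)}+2z-2$, the inequality $\sqrt{\alpha}<\sqrt{\beta(1-\gamma)}+\sqrt{\gamma(1-\beta)}$ together with $\beta(1-\gamma)-\gamma(1-\beta)=\beta-\gamma\ge 0$ gives $\sqrt{\alpha}\bigl(\sqrt{\beta(1-\gamma)}-\sqrt{\gamma(1-\beta)}\bigr)\le\beta-\gamma$, i.e.\ $g(\alpha,\beta,\gamma)\le g(\alpha,\gamma,\beta)$, and symmetrically $g(\alpha,\beta,\gamma)\le g(\beta,\gamma,\alpha)$. Thus $g(\alpha,\beta,\gamma)$ is the smallest of the three values obtained by singling out $\gamma$, $\beta$ or $\alpha$, which immediately yields $m_7(\abg)\ge g(\alpha,\beta,\gamma)$ (Lemma \ref{7Vertex_MinDensity} applied to each relabelling).

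The $H_7'$ bound I expect to be routine: carrying out, for $H_7'$ (vertices as in Figure \ref{Fig_H7H7'}), the same elimination of vertex weights as in the proof of Lemma \ref{7Vertex_MinDensity} expresses $t(H_7',w)$ as an explicit function of a single free weight whose minimum over the feasible range is a closed form in $\abg$; one then checks this is at least $g(\alpha,\beta,\gamma)$, which I anticipate reduces, as above, to the factorisation of $\Delta$ (possibly after identifying the minimum as one of the $g$-values). This step is bookkeeping, not mathematics.

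The main obstacle is the $H_9$ bound $m_9(\abg)\ge g(\alpha,\beta,\gamma)$. Since $H_9$ has three vertices in each class, a general weighting has six free parameters constrained by the three edge-density equations, leaving a three-parameter minimisation of $t(H_9,\cdot)$ over a region cut out by requiring all vertex weights to lie in $[0,1]$; unlike $H_7$ and $H_7'$ this does not collapse to a one-variable problem. My strategy would be to cut down the free parameters using Lagrange conditions, or the moving-weight / $\Split$--$\Merge$ manipulations already used to rule out degenerate extremal configurations, hoping to force a vertex weight to vanish at the minimum so that $(H_9,w)$ degenerates to a weighting of $H_7$, $H_7'$ or $H_6$, whence the cases already handled finish the job. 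The difficulty is that it is not clear such a degeneration always happens inside $R_2$: $t(H_9,\cdot)$ may have genuinely interior critical points, and comparing their (algebraically unpleasant) values with $2\sqrt{\alpha\beta(1-\gamma)}+2\gamma-2$ across all of $R_2$ appears to require either substantial computer algebra or an inequality I do not currently see — which is precisely why the statement remains a conjecture.
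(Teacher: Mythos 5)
You are attempting to prove what the paper itself leaves as Conjecture \ref{Conj_Tmin}: the paper contains no proof of this statement, only the conditional theorem that Conjecture \ref{Conj_H9_NotExtremal} implies it. Your proposal in effect rediscovers that conditional result. The dispatch of the $[0,1]^3\setminus R$ and $R_1$ cases via Theorems \ref{Thm_T=0} and \ref{Thm_H6}, the upper bound in $R_2$ from Lemma \ref{7Vertex_MinDensity}, and the reduction to $H_7$, $H_7'$, $H_9$ via Theorem \ref{Thm_H7H7'H9} all match the paper. Your comparison of the three relabelled $H_7$ values is correct, and the factorisation $\Delta(\abg)=\bigl(\alpha-(\sqrt{\beta(1-\gamma)}+\sqrt{\gamma(1-\beta)})^2\bigr)\bigl(\alpha-(\sqrt{\beta(1-\gamma)}-\sqrt{\gamma(1-\beta)})^2\bigr)$, giving $\sqrt{\alpha}\bigl(\sqrt{\beta(1-\gamma)}-\sqrt{\gamma(1-\beta)}\bigr)\le\beta-\gamma$, is a cleaner route than the paper's $d_1,d_2$ computation in its proof of the implication. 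For $H_7'$, however, the paper does not redo an optimisation as you anticipate: it observes $\Gamma_A(b_2)=\Gamma_A(b_3)$ in $H_7'$, applies $\Merge$ to $b_2,b_3$ and then $\Split$ at the resulting partial edge (replacing $c_2$), converting any $(H_7',w')$ into an $(H_7,w)$ with identical edge and triangle densities, so $H_7'$ can never beat $H_7$. Your sketched one-variable minimisation would likely also work, but you leave it unexecuted.

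The genuine gap is exactly the one you name yourself: the $H_9$ lower bound. Nothing in the paper supplies it; showing that no weighting of $H_9$ with densities in $R_2$ is simultaneously extremal and vertex minimal (or, in your formulation, that $m_9(\abg)\ge 2\sqrt{\alpha\beta(1-\gamma)}+2\gamma-2$ on $R_2$) is precisely Conjecture \ref{Conj_H9_NotExtremal}, which the authors state as open. Your hoped-for degeneration argument, forcing a vertex weight of $H_9$ to vanish at the minimum, is not established anywhere in the paper, and no substitute is available from its lemmas. So what you have is not a proof of the statement but a correct (and in places tidier) reduction of Conjecture \ref{Conj_Tmin} to Conjecture \ref{Conj_H9_NotExtremal} --- i.e., a reproof of the paper's final theorem, with the conjecture itself remaining open for you just as it does for the authors.
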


To prove Conjecture \ref{Conj_Tmin} it is sufficient to prove the
subsequent conjecture.

\begin{conjecture}\label{Conj_H9_NotExtremal}
If $(\abg)\in R_2$ then for all weightings $w$ such that
$(H_9,w)\in\Tri(\abg)$, $(H_9,w)$ is either not extremal or not
vertex minimal.
\end{conjecture}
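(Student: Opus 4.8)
The plan is to argue by contradiction, in the style of the proofs of Lemmas \ref{G4_NotExtremal}--\ref{G14_NotExtremal}. Suppose $(\abg)\in R_2$ and $(H_9,w)\in\Tri(\abg)$ is extremal and vertex minimal; then $w(v)>0$ for every vertex, so we may form $(H_9,w,p)\in\DTri$ with $p\equiv 1$ and apply Lemma \ref{Order_Sset}. I will use the vertex labelling of Figure \ref{Fig_H9} and the following features of $H_9$ ($=K_{3,3,3}$ with a $9$-cycle deleted): it has $18$ edges, it has exactly three triangles, these three triangles are pairwise vertex-disjoint (so they partition $V(H_9)$), and $H_9$ admits an action of $S_3$ permuting its three vertex classes. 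Writing the nine weights with hats, $t(H_9,w)$ is the sum of the weights of the three triangles, while each of $\alpha,\beta,\gamma$ equals $1$ minus a quadratic in the weights coming from the three deleted cycle-edges between the relevant pair of classes.

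\emph{Step 1 (extremality).} For each pair of classes --- say $A,B$ --- and each non-edge $a_0b_0$ and edge $a_1b_1$ of $H_9$, Lemma \ref{Order_Sset} applied to $(H_9,w,p)$ forces $\sum_{c\in C_{a_0b_0}}w(c)\ge\sum_{c\in C_{a_1b_1}}w(c)$, exactly as in the proof of Lemma \ref{G14_NotExtremal}; similarly for the pairs $B,C$ and $C,A$. Working out the relevant common neighbourhoods, the resulting system of inequalities turns out to be equivalent to the statement that six of the nine vertex weights are at most $1/2$ --- concretely, all of them except the weights of the three vertices of one particular triangle of $H_9$. (The set of constraints one obtains is $S_3$-symmetric, so the choice of distinguished triangle is immaterial.)

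\emph{Step 2 (reduction and conclusion).} Using the three normalisations $\sum\hat a_i=\sum\hat b_i=\sum\hat c_i=1$, the three edge-density equations cut the space of weightings down to a $3$-dimensional set, on which one must minimise the cubic $t(H_9,w)$ subject to non-negativity of all nine weights and the six bounds from Step 1. The claim to be proved is that this constrained minimum is at least
\[
m(\abg):=\min\bigl\{\,2\sqrt{\alpha\beta(1-\gamma)}+2\gamma-2,\ \ 2\sqrt{\beta\gamma(1-\alpha)}+2\alpha-2,\ \ 2\sqrt{\alpha\gamma(1-\beta)}+2\beta-2\,\bigr\}.
\]
Granting this, we finish: Lemma \ref{7Vertex_MinDensity}, together with the invariance of the problem (and of $R_2$) under permuting $A,B,C$, produces a weighted tripartite graph on only $7$ vertices lying in $\Tri(\abg)$ with triangle density exactly $m(\abg)$, so $T_{\min}(\abg)\le m(\abg)\le t(H_9,w)$. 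If this is a strict inequality, $(H_9,w)$ is not extremal; if it is an equality, then $T_{\min}(\abg)=m(\abg)$ is attained by a $7$-vertex graph, so $(H_9,w)$, having $9$ vertices, is not vertex minimal. In either case the assumption fails, proving the conjecture (and hence also Conjecture \ref{Conj_Tmin}).

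\emph{The obstacle.} Everything except the constrained minimisation of Step 2 is routine; that minimisation is the hard part. After using the density equations there are still three genuine degrees of freedom, $t$ is not convex, and solving the quadratic density equations for three of the weights introduces square roots and a sign case-split; once radicals are cleared, the inequality that the minimum is at least $m(\abg)$ becomes a polynomial inequality in $\abg$ that must hold throughout $R_2$. I expect the $\le 1/2$ bounds from Step 1 to be precisely what excludes the interior critical points that would otherwise beat $m(\abg)$, so they must be imposed from the outset. A sensible plan is: first observe that a zero weight would collapse $H_9$ to a graph on at most eight vertices --- not extremal or not vertex minimal by Lemma \ref{MaxClassSize3} and the earlier lemmas, contradicting vertex minimality --- so the minimum occurs with all weights positive; then show it lies on a face of the box (some constrained weight equal to $1/2$), reducing to a two- and then one-variable problem; and on each face identify the reduced extremal problem, via a $\Merge$/$\Split$ reduction, with one already understood, ultimately with $H_7$, $H_7'$ or $H_6$. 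Making this last identification rigorous --- rather than grinding out a three-variable optimisation by hand or machine --- is where the real difficulty lies.
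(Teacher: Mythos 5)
This statement is not proved in the paper: it is stated as an open conjecture, and the paper establishes only that it would imply the closed-form expression for $T_{\min}$ (Conjecture \ref{Conj_Tmin}). Your proposal does not close it either. The reduction in your ``Step 2'' --- that every weighting of $H_9$ with the prescribed edge densities and satisfying the Lemma \ref{Order_Sset} constraints has triangle density at least $m(\abg)$ --- \emph{is} the conjecture; the surrounding logic (if the inequality is strict then $(H_9,w)$ is not extremal, if it is an equality then a $7$-vertex graph attains $T_{\min}$ so $(H_9,w)$ is not vertex minimal) is correct but routine. You explicitly concede that the constrained three-variable minimisation is not carried out, so what you have is a plausible plan of attack, not a proof. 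There is no guarantee that the minimisation comes out the right way; indeed the authors presumably left this as a conjecture precisely because they could not verify it.

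Separately, your structural description of $H_9$ is wrong, and this undermines Step 1. The tripartite complement of $H_9$ is a $9$-cycle alternating through the classes, so each pair of classes is missing exactly three edges; an inclusion--exclusion count shows $H_9$ has \emph{nine} triangles, not three, and they are certainly not pairwise vertex-disjoint (the graph whose triangles form a perfect matching of three disjoint triangles is the tripartite complement $\Gbar$ appearing in the proof of Lemma \ref{G14_NotExtremal}, a quite different graph). Your Step 1 is modelled on the $\Gxiv$ argument, but there the derivation of $1-2w(v)\geq 0$ depends on the specific common neighbourhoods $C_{a_ib_j}$ of that graph; for $H_9$ the sets $C_{a_ib_j}$ are different, and the claimed conclusion that the constraints amount to ``six of the nine weights are at most $1/2$, all except those of one distinguished triangle'' is asserted without computation and sits oddly with the claimed $S_3$-symmetry of the constraint set. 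Before anything else you would need to redo the neighbourhood analysis for the actual $H_9$; and even then the genuinely hard optimisation remains untouched.
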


\begin{theorem}
Conjecture \ref{Conj_H9_NotExtremal} implies Conjecture
\ref{Conj_Tmin}.
\end{theorem}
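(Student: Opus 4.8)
The plan is to split according to where $(\abg)$ lies. If $(\abg)\in[0,1]^3\setminus R$ then $T_{\min}(\abg)=0$ by Theorem~\ref{Thm_T=0}. If $(\abg)\in R$ but $(\abg)\notin R_2$ --- which, since $R$ is the disjoint union of $R_1$ and $R_2$, is exactly the ``otherwise'' case of Conjecture~\ref{Conj_Tmin} --- then $(\abg)\in R_1$ and Theorem~\ref{Thm_H6} gives $T_{\min}(\abg)=\alpha+\beta+\gamma-2$. Neither case uses Conjecture~\ref{Conj_H9_NotExtremal} or the hypothesis $\gamma\le\alpha,\beta$. So the substance is the case $(\abg)\in R_2$, where we must establish $T_{\min}(\abg)=f(\abg)$ with $f(\abg):=2\sqrt{\alpha\beta(1-\gamma)}+2\gamma-2$, assuming $\gamma\le\alpha,\beta$.

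The bound $T_{\min}(\abg)\le f(\abg)$ is immediate from Lemma~\ref{7Vertex_MinDensity}, which gives a weighting $w$ with $(H_7,w)\in\Tri(\abg)$ and $t(H_7,w)=f(\abg)$. For the reverse inequality, recall $T_{\min}(\abg)$ is attained (Section~2); choosing a minimiser with the fewest vertices gives an extremal \emph{and} vertex minimal $(G,w)\in\Tri(\abg)$. By Theorem~\ref{Thm_H7H7'H9}, $G$ is strongly-isomorphic to $H_7$, $H_7'$ or $H_9$, and Conjecture~\ref{Conj_H9_NotExtremal} excludes $H_9$ --- this is the only use of the conjecture. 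Hence it suffices to show that every weighting of a strongly-isomorphic copy of $H_7$, and of $H_7'$, realising edge densities $\abg$ has triangle density at least $f(\abg)$.

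For $H_7$: the set $R$ and the function $\Delta$ are symmetric in $\alpha,\beta,\gamma$, hence so is $R_2$; applying Lemma~\ref{7Vertex_MinDensity} with the coordinates of $\abg$ permuted to match which class of the copy has three vertices shows that every such copy has triangle density at least one of $f(\abg)$, $2\sqrt{\beta\gamma(1-\alpha)}+2\alpha-2$ and $2\sqrt{\alpha\gamma(1-\beta)}+2\beta-2$. That $f(\abg)$ is the smallest of these three when $\gamma\le\alpha,\beta$ follows from the identity
\[
\Delta(\abg)=\alpha^{2}-2\alpha(\beta+\gamma-2\beta\gamma)+(\beta-\gamma)^{2},
\]
whose roots in $\alpha$ are $\bigl(\sqrt{\beta(1-\gamma)}\pm\sqrt{\gamma(1-\beta)}\bigr)^{2}$: since $(\abg)\in R_2$ means $\Delta(\abg)<0$, we get $\sqrt{\alpha}<\sqrt{\beta(1-\gamma)}+\sqrt{\gamma(1-\beta)}$, and combined with $\beta\ge\gamma$ (so that $\sqrt{\beta(1-\gamma)}-\sqrt{\gamma(1-\beta)}=\frac{\beta-\gamma}{\sqrt{\beta(1-\gamma)}+\sqrt{\gamma(1-\beta)}}$) this yields $\sqrt{\alpha\beta(1-\gamma)}-\sqrt{\alpha\gamma(1-\beta)}\le\beta-\gamma$, i.e.\ $f(\abg)\le 2\sqrt{\alpha\gamma(1-\beta)}+2\beta-2$; the same identity read as a quadratic in $\beta$, with $\alpha\ge\gamma$, gives $f(\abg)\le 2\sqrt{\beta\gamma(1-\alpha)}+2\alpha-2$. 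For $H_7'$ the conjecture provides nothing, so this must be done unconditionally: mirror the proof of Lemma~\ref{7Vertex_MinDensity}, parametrising a general weighting of $H_7'$ by a single free vertex weight, minimising $t(H_7',w)$ in closed form, and comparing the result --- over all three relabellings --- against $f(\abg)$ on $R_2\cap\{\gamma\le\alpha,\beta\}$. One expects this minimum to be a value $2\sqrt{xy(1-z)}+2z-2$, or else $\alpha+\beta+\gamma-2$ (which is $\ge f(\abg)$ on $R_2$ by Lemma~\ref{a+b+g-2_LowerBound}), so the comparison reduces to the inequalities just proved.

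The main obstacle is exactly this last step, the unconditional treatment of $H_7'$: it is the one ingredient the conjecture does not supply, so one has to carry out the weighting optimisation for $H_7'$ explicitly as in Lemma~\ref{7Vertex_MinDensity} and then keep careful track of which coordinate permutations of $\abg$ actually correspond to admissible weightings of $H_7'$. Everything else is either a direct citation (Theorems~\ref{Thm_T=0}, \ref{Thm_H6}, \ref{Thm_H7H7'H9} and Lemma~\ref{7Vertex_MinDensity}) or the short computation with $\Delta$ as a quadratic displayed above.
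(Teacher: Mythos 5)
Your overall reduction is the same as the paper's (split off $[0,1]^3\setminus R$ and $R_1$ via Theorems \ref{Thm_T=0} and \ref{Thm_H6}, use Theorem \ref{Thm_H7H7'H9} plus the conjecture to restrict attention to $H_7$ and $H_7'$ in $R_2$, then compare the three cyclic values of $2\sqrt{\alpha\beta(1-\gamma)}+2\gamma-2$), and the part you actually carried out for $H_7$ is correct: reading $\Delta$ as a quadratic in $\alpha$ (and in $\beta$) with roots $\bigl(\sqrt{\beta(1-\gamma)}\pm\sqrt{\gamma(1-\beta)}\bigr)^2$ is a clean way to see that $\gamma\leq\alpha,\beta$ makes $f(\abg)$ the least of the three expressions; the paper instead gets this by the $d_1,d_2$ manipulation with $\Delta$, so your comparison step is a legitimate (arguably neater) variant. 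But the proposal has a genuine gap exactly where you flag ``the main obstacle'': the $H_7'$ case is never proved. You only state what one ``expects'' the optimisation over weightings of $H_7'$ to produce, and without that the lower bound $t(H_7',w')\geq f(\abg)$ is unestablished, so the implication is not yet proved.

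The missing ingredient is not a fresh closed-form optimisation but a short structural reduction, which is how the paper disposes of $H_7'$: in $H_7'$ one has $\Gamma_A(b_2)=\Gamma_A(b_3)$, and $w'(b_2)+w'(b_3)>0$ (otherwise $\alpha=1$, impossible in $R_2$ by Lemma \ref{abgInR3_implies_Not01}); so apply $\Merge$ to $b_2,b_3$, which creates a single partial edge $bc_2$, then apply $\Split$ to that edge choosing to replace $c_2$. By Lemma \ref{SplitDensity} (and the density-preserving property of $\Merge$) the result is a weighted tripartite graph with the same edge and triangle densities as $(H_7',w')$, and it is strongly-isomorphic to $H_7$. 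Hence every weighting of $H_7'$ is matched by a weighting of $H_7$ with identical densities, ``$H_7'$ can do no better than $H_7$,'' and Lemma \ref{7Vertex_MinDensity} together with your three-way comparison finishes the proof. If you add this Merge--Split step (the same trick used for $G_5$, $G_6$, $G_{10}$ in Section \ref{Sec_SpecificGraphs}), your argument becomes complete; as written, it is conditional on an unexecuted computation.
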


\begin{proof}
Theorems \ref{Thm_T=0} and \ref{Thm_H6} tell us when
$T_{\min}(\abg)=0$ and $\alpha+\beta+\gamma-2$ respectively. By
Theorem \ref{Thm_H7H7'H9} and Conjecture \ref{Conj_H9_NotExtremal}
we know that the only extremal tripartite graphs we have to consider
are $H_7$ and $H_7'$. Let us show that $H_7'$ can do no better than
$H_7$.

Let $(\abg)\in R_2$ and $(H_7',w')\in\Tri(\abg)$. We need to show
there exists a weighting $w$ for $H_7$ so that $(H_7,w)$ has the
same densities as $(H_7',w')$. Note that
$\Gamma_A(b_2)=\Gamma_A(b_3)$ in $H_7'$ and $w'(b_2)+w'(b_3)>0$
(otherwise $\alpha=1$ which can not occur according to Lemma
\ref{abgInR3_implies_Not01} (ii)). Hence we can modify $H_7'$ by
applying $\Merge$ on $b_2, b_3$ labelling the resulting merged
vertex $b$. This creates one partial edge $bc_2$. Apply $\Split$ to
this edge to remove it, choosing to replace the vertex $c_2$. The
resulting weighted tripartite graph has the same densities as
$(H_7',w')$ and it is easy to check that it is strongly-isomorphic
to $H_7$.

Therefore when $(\abg)\in R_2$ we need only consider graphs
strongly-isomorphic to $H_7$, and by Lemma \ref{7Vertex_MinDensity}
we get $T_{\min}(\abg)$ is equal to
\begin{multline*}
\min\{2\sqrt{\alpha\beta(1-\gamma)}+2\gamma-2,
2\sqrt{\alpha\gamma(1-\beta)}+2\beta-2,
2\sqrt{\beta\gamma(1-\alpha)}+2\alpha-2\}.
\end{multline*}
To finish the proof let us show that $\gamma\leq\beta$ if and only
if
\[
2\sqrt{\alpha\gamma(1-\beta)}+2\beta-2\geq
2\sqrt{\alpha\beta(1-\gamma)}+2\gamma-2.
\]
We can prove a similar result for $\gamma\leq\alpha$. For ease of
notation let $d_1 = 2\sqrt{\alpha\gamma(1-\beta)}+2\beta-2$ and $d_2
= \alpha+\beta+\gamma-2$. So we have
\begin{center}
\begin{tabular}{lrcl}
& $d_1$ & $\geq$ & $2\sqrt{\alpha\beta(1-\gamma)}+2\gamma-2$\\
$\iff$ & $d_1+2(1-\gamma)$ & $\geq$ & $2\sqrt{\alpha\beta(1-\gamma)}$\\
$\iff$ & $(d_1+2(1-\gamma))^2$ & $\geq$ & $4\alpha\beta(1-\gamma)$\\
& & $=$ & $(d_2+2(1-\gamma))^2-\Delta(\abg)$\\
$\iff$ & $d_1^2+4d_1(1-\gamma)$ & $\geq$ & $d_2^2+4d_2(1-\gamma)-\Delta(\abg)$\\
$\iff$ & $d_1^2+4d_1-d_2^2-4d_2+\Delta(\abg)$ & $\geq$ &
$4\gamma(d_1-d_2)$
\end{tabular}
\end{center}
By Lemma \ref{a+b+g-2_LowerBound} we know $d_1-d_2\geq 0$. It is
easy to check that $d_1-d_2=0$ implies $\Delta(\abg)\geq 0$ which is
not true, since $(\abg)\in R_2$. Consequently we have
\[
d_1\geq 2\sqrt{\alpha\beta(1-\gamma)}+2\gamma-2 \iff
\frac{d_1^2+4d_1-d_2^2-4d_2+\Delta(\abg)}{4(d_1-d_2)}\geq\gamma.
\]
Substituting $d_1=2\sqrt{\alpha\gamma(1-\beta)}+2\beta-2$ and
$d_2=\alpha+\beta+\gamma-2$ into
\[
\frac{d_1^2+4d_1-d_2^2-4d_2+\Delta(\abg)}{4(d_1-d_2)}
\]
shows that it simplifies to $\beta$. Thus
\[
2\sqrt{\alpha\gamma(1-\beta)}+2\beta-2\geq
2\sqrt{\alpha\beta(1-\gamma)}+2\gamma-2 \iff \beta\geq\gamma.
\]
\end{proof}

\newpage

\section*{Appendix}

The following is a C++ implementation of the algorithm that produces
a list of possible extremal vertex minimal tripartite graphs. The
graphs it outputs are given in Figure \ref{Fig_Program_Output}.

\pagestyle{empty}

\begin{scriptsize}
\verbatiminput{GraphFinder.cc}
\end{scriptsize}

\end{document}